\documentclass[11pt]{amsart}
\usepackage{txfonts}
\usepackage{amsmath,amsthm,amscd,amssymb,mathtools}
\usepackage{graphicx}
\usepackage[colorlinks,plainpages,backref,urlcolor=blue]{hyperref}
\usepackage{tikz}
\usetikzlibrary{cd, calc, arrows}
\usepackage{enumitem}

\numberwithin{equation}{section}

\newtheorem{theorem}{Theorem}[section]
\newtheorem{corollary}[theorem]{Corollary}
\newtheorem{lemma}[theorem]{Lemma}
\newtheorem{prop}[theorem]{Proposition}

\theoremstyle{definition}
\newtheorem{definition}[theorem]{Definition}
\newtheorem{example}[theorem]{Example}
\newtheorem{remark}[theorem]{Remark}
\newtheorem{question}[theorem]{Question}
\newtheorem{problem}[theorem]{Problem}

\newtheorem{cons}{Construction}

\newcommand{\A}{\mathcal{A}}

\newcommand{\M}{\mathcal{M}}
\newcommand{\VV}{\mathcal{V}}
\newcommand{\RR}{\mathcal{R}}

\newcommand{\ZZ}{\mathcal{Z}}

\newcommand{\C}{\mathbb{C}}
\newcommand{\Z}{\mathbb{Z}}

\newcommand{\Q}{\mathbb{Q}}

\newcommand{\CP}{\mathbb{CP}}
\renewcommand{\P}{\mathbb{P}}
\renewcommand{\k}{\Bbbk}

\newcommand{\bm}{\mathbf{m}}

\DeclareMathOperator{\ii}{i}

\newcommand{\codim}{\operatorname{codim}}

\newcommand{\sd}{\operatorname{sd}}

\newcommand{\lk}{\operatorname{lk}}
\newcommand{\Sing}{\operatorname{Sing}}
\newcommand{\id}{\operatorname{id}}

\newcommand{\ZK}{\mathcal{Z}_K}
\newcommand{\MM}{\mathsf{M}}

\newcommand{\surj}{\twoheadrightarrow}
\newcommand{\inj}{\hookrightarrow}
\newcommand\isom{\xrightarrow{ \,\smash{\raisebox{-0.6ex}{\ensuremath{\scriptstyle\simeq}}}\,}}

\definecolor{lime}{HTML}{A6CE39}
\DeclareRobustCommand{\orcidicon}{
	\begin{tikzpicture}
	\draw[lime, fill=lime] (0,0) 
	circle [radius=0.16] 
	node[white] {{\fontfamily{qag}\selectfont \tiny ID}};
	\draw[white, fill=white] (-0.0625,0.095) 
	circle [radius=0.007];
	\end{tikzpicture}
	\hspace{-2mm}
}
\foreach \x in {A, ..., Z}{\expandafter\xdef\csname orcid\x\endcsname{\noexpand\href{%
https://orcid.org/\csname orcidauthor\x\endcsname}{\noexpand\orcidicon}}}

\title[Non-formal Milnor fibers via polyhedral products]%
{Highly connected non-formal Milnor fibers\\ via polyhedral products}

\author[Alexander~I.~Suciu]{Alexander~I.~Suciu$^1$\!\!\orcidA{}}
\address{Department of Mathematics,
Northeastern University,
Boston, MA 02115, USA}
\email{\href{mailto:a.suciu@northeastern.edu}{a.suciu@northeastern.edu}}
\urladdr{\href{https://suciu.sites.northeastern.edu}%
{suciu.sites.northeastern.edu}}
\thanks{Partially supported by the project ``Singularities and Applications'' - CF 132/31.07.2023 funded by the European Union - NextGenerationEU$^1$ - through Romania's National Recovery and Resilience Plan.}

\subjclass[2020]{Primary
32S55;  
Secondary
13F55,  
55P62,  
55S30,  
55U10   
}

\keywords{Subspace arrangement, singularity, Milnor fibration,
monodromy, formality, Massey product, moment-angle complex,
polyhedral product, Stanley--Reisner ring}

\begin{document}

\begin{abstract}
We show that the realization theorem of Fern\'andez de Bobadilla,
which identifies the Milnor fiber of a weighted-homogeneous 
polynomial with the complement of a germ of an analytic set, can be
combined with the systematic Massey product constructions of
Grbi\'c--Linton for moment-angle complexes $\ZK = \ZK(D^2, S^1)$
to produce weighted-homogeneous polynomials whose Milnor 
fibers are arbitrarily highly connected and non-formal.
The original application of this strategy, due to Fern\'andez de
Bobadilla, used the Denham--Suciu classification of lowest-degree
triple Massey products and yielded only $2$-connected
non-formal Milnor fibers. The Grbi\'c--Linton framework, which
constructs non-trivial $n$-fold Massey products in $H^*(\ZK;\Q)$ for
arbitrary $n$ and in arbitrary cohomological degrees, removes this
connectivity restriction entirely. For the explicit families we construct, 
we also compute the dimension of the singular set and show that the 
resulting Kato--Matsumoto connectivity bound is sharp.
\end{abstract}

\maketitle

\section{Introduction}
\label{sec:intro}

The formality question for Milnor fibers of hypersurface singularities
was raised by Papadima--Suciu~\cite{PS-formal}.  Recall that a
topological space $X$ is \emph{formal} (over $\Q$) if its rational
homotopy type is determined by its cohomology ring; it is
\emph{$q$-formal} if the $q$-minimal Sullivan model of $X$ agrees
with that of $H^*(X;\Q)$.  The complement $M(\A)$ of a hyperplane
arrangement is always formal, by Brieskorn's theorem \cite{Br}; the
question is whether the same holds for its Milnor fiber $F(\A)$.

Two non-formal families are known.  Zuber~\cite{Zu} showed that the
Milnor fiber of the Ceva arrangement $\A(3,3,3)$ is not $1$-formal,
the obstruction arising from an irrational pencil on the fiber rather
than from a Massey product; the fiber is not simply connected.  A broader
family, constructed in~\cite{Su-mfnf}, uses arrangements supporting
reduced $3$-multinets; these fibers are likewise not simply connected.
Fern\'andez de Bobadilla~\cite{FdB}, building on work of Denham--Suciu~\cite{DS07},
found the first non-formal Milnor fiber that \emph{is} simply connected:
a weighted-homogeneous polynomial in $\C^{11}$ whose Milnor fiber is
homotopy equivalent to a moment-angle complex $\ZK$, with non-formality
witnessed by a non-trivial triple Massey product in $H^8(F;\Z)$.
All known examples in this family, however, share the same connectivity
profile (simply connected), because the underlying Denham--Suciu
construction is confined to simplicial complexes with edges as minimal
non-faces, giving only $2$-connected moment-angle complexes.

The purpose of this paper is to show that the work of
Grbi\'c--Linton~\cite{GL} removes this restriction completely.
Their systematic construction of non-trivial $n$-fold Massey products
in $H^*(\ZK;\Z)$, via star deletions on joins of simplicial complexes,
produces moment-angle complexes with arbitrarily prescribed connectivity
and non-formality.  Combined with the Fern\'andez de Bobadilla realization 
theorem, this yields our main result (Theorem~\ref{thm:main}): for every 
$k\ge 1$ and $n\ge 3$ there exists a weighted-homogeneous polynomial
$\Phi_K\colon\C^N\to\C$ with trivial geometric monodromy 
whose Milnor fiber $F_K=\Phi_K^{-1}(1)$ is $(2k+2)$-connected
and non-formal, with non-formality witnessed by a non-trivial
$n$-fold Massey product in $H^{2n(k+1)+2}(F_K;\Q)$.

\medskip\noindent\textbf{A first example.}
To give a sense of what the polynomials of Theorem~\ref{thm:main} look like,
consider the first member of the family of Theorem~\ref{thm:family}
($k=1$, $n=3$, $q=2$): take three disjoint triangles $K^i=\partial\Delta^2$ and
apply two Grbi\'c--Linton star deletions to their join (see
Example~\ref{ex:k2n3}). The resulting weighted-homogeneous polynomial
$\Phi_K\colon\C^{31}\to\C$ has degree $5$ and $22$ generators: $21$ degree-$3$
monomials (each a product of three variables, one from each $K^i$, of $y$-weight
$2$) and a single degree-$4$ monomial (of $y$-weight $1$). Its Milnor fiber
$F_K=\Phi_K^{-1}(1)\simeq\ZK$ has trivial geometric monodromy and is
$4$-connected. Moreover, $F_K$ is $12$-formal but not $13$-formal, and the
non-formality is witnessed by a non-trivial triple Massey product in
$H^{14}(F_K;\Q)\cong\Q$ -- the lowest possible degree given the connectivity.
For general $k\ge 1$ and $n=3$, the analogous family (Theorem~\ref{thm:family})
has exact formality degree $6k+6$, with the Massey product in $H^{6k+8}(F_K;\Q)$.

\medskip\noindent\textbf{The role of the singular set.}
A complementary perspective is provided by the Kato--Matsumoto
theorem~\cite{KM75}: for a holomorphic function germ
$f\colon(\C^N,0)\to(\C,0)$, the Milnor fiber is at least
$(N-s-2)$-connected, where $s=\dim_0\Sing(f)$.
For the polynomials $\Phi_K$ of Theorem~\ref{thm:main}, the 
non-degeneracy condition of Proposition~\ref{prop:sing} holds throughout 
the family of Theorem~\ref{thm:family} (with $k\ge 1$, $n=3$, 
and $q=k+1$): one has $m=3(q+1)$, $\nu=q+1$, and 
\[
  s = m+r-2\nu = m+r-2(q+1),
\]
where $r$ is the number of facets of $K$ (see 
Remark~\ref{rem:family-m2} for explicit values).  The 
Kato--Matsumoto bound 
\[
  N - s - 2 = 2\nu - 2 = 2q
\]
is therefore independent of $r$, and equals the true connectivity 
$2q$ of $F_K$ for all $q \ge 2$: the bound is sharp throughout the 
family.

\medskip\noindent\textbf{Context: partial formality and connectivity.}
The connection between connectivity and partial formality is
transparent: a $c$-connected space satisfies $b_i=0$ for $1\le i\le c$,
so its $c$-minimal Sullivan model is free (no relations), hence trivially
$c$-formal.  The first cohomology appears in degree $c+1$, but a
non-trivial triple Massey product of such classes lands in degree
$3(c+1)-1 = 3c+2$, and by Lemma~\ref{lem:massey-formal} obstructs
$(3c+1)$-formality rather than $(c+1)$-formality; non-formality thus
surfaces far above the connectivity range---a gap made precise by
Theorem~\ref{thm:family}.
The Dimca--Papadima criterion
\cite{DP-pisa} predicts formality when the monodromy acts trivially
on rational homology and the fiber is a cover of a formal base; our fibers have
trivial monodromy but are not covers of any formal space (being simply
connected, they have no non-trivial covers at all), so the criterion
says nothing in either direction.

\medskip
The paper is organized as follows.
Section~\ref{sec:milnor} reviews weighted-homogeneous Milnor fibrations
(\S\ref{subsec:mf}), formality and Massey products
(\S\ref{subsec:formality}), the Dimca--Papadima criterion for finite
group actions with a self-contained proof
(\S\ref{subsec:group-formality}), the known
non-formal Milnor fibers of hyperplane arrangements
(\S\ref{subsec:formality-mf-arr}), the Fern\'andez de Bobadilla
realization theorem (\S\ref{subsec:FdB}), and the Kato--Matsumoto
(KM) connectivity bound (\S\ref{subsec:KM}).
Section~\ref{sec:polyhedral} introduces moment-angle complexes
(\S\ref{subsec:mac}) and their coordinate subspace arrangements
(\S\ref{subsec:coordinate-subspaces}), the associated Milnor
fibration (\S\ref{subsec:milnor-K}), the Hochster formula and
connectivity (\S\ref{subsec:Hochster-conn}), Massey products from
Baskakov to Denham--Suciu (\S\ref{subsec:massey-zk}), and the
Grbi\'c--Linton constructions (\S\ref{subsec:GL}).
Section~\ref{sec:main} states and proves Theorem~\ref{thm:main}.
Section~\ref{sec:examples} works out explicit examples, establishes
the infinite family of Theorem~\ref{thm:family}, and compares the
KM bounds. Section~\ref{sec:questions} collects open problems.

\section{Milnor fibrations and formality}
\label{sec:milnor}

\subsection{The Milnor fibration}
\label{subsec:mf}

Milnor's fibration theorem \cite{Milnor} applies in considerable
generality.  Let $f\in\C[z_0,\dots,z_d]$ be a weighted-homogeneous
polynomial of degree $n$ with positive integer weights
$(w_0,\dots,w_d)$, and let $M=\C^{d+1}\setminus V(f)$ be the
complement of its zero-set.  Then the restriction
\[
  f\colon M \longrightarrow \C^*
\]
is the projection of a smooth, locally trivial fiber bundle---the
\emph{(global) Milnor fibration} of $f$---with typical fiber
the \emph{Milnor fiber} $F = F(f) = f^{-1}(1)$.  The space $F$
is a smooth affine variety having the homotopy type of a finite
CW-complex of dimension at most~$d$.  Assuming $f$ is not a proper
power, $F$ is connected.  The \emph{geometric monodromy} is the
diffeomorphism
\[
  h\colon F\longrightarrow F, \quad
  (z_0,\dots,z_d) \longmapsto
  (\zeta_n^{w_0}z_0,\dots,\zeta_n^{w_d}z_d),
\]
where $\zeta_n = e^{2\pi\ii/n}$.  The map $h$ has order $n$ and
generates a $\Z_n$-action on $F$, free off a subvariety of positive
codimension; the quotient $F\to F/\Z_n$ is therefore a ramified
cyclic cover, unramified in the homogeneous case treated below.
The mapping torus of $h$ is homotopy equivalent to $M$.
See \cite{MilnorOrlik} for an early study of the topology of $F$ in
this generality.

The induced maps $h_i = h_*\colon H_i(F;\Q)\to H_i(F;\Q)$
are the \emph{algebraic monodromy operators}; since $h^n = \id$,
each $h_i$ is diagonalizable with eigenvalues that are $n$-th roots
of unity.  Their characteristic polynomials $\Delta_i(t) = \det(t\,\id- h_i)$
record the monodromy data of the fibration;
the $1$-eigenspace $\ker(h_i - \id)$ has dimension $b_i(F/\Z_n)$.

This bundle is trivial when $f$ has a linear factor: a linear form $\ell$ not
vanishing on $M$ gives the splitting $z \mapsto (z,\ell(z))$, whence
$M \cong U \times \C^*$; in particular this holds for a hyperplane arrangement,
$\ell$ being any one of its defining forms.

Independently of any such splitting, homogeneity makes the monodromy
$h(z)=\zeta_n z$ scalar multiplication by $\zeta_n$, so $F/\Z_n\cong U$ and
$\sigma\colon F\to U$ is a regular cyclic $n$-fold cover.  In the
weighted-homogeneous case with unequal weights the monodromy is instead a
diagonal, non-scalar automorphism of $F$, and no such projectivized splitting
is available in general.

\subsection{Formality and Massey products}
\label{subsec:formality}

A path-connected space $X$ is \emph{$q$-formal} (over $\Q$) if its
$q$-minimal Sullivan model is weakly $q$-equivalent to
$(H^*(X;\Q),0)$; full formality corresponds to $q=\infty$, and
$q$-formality implies $r$-formality for all $r\le q$
(see e.g.~\cite{PS-formal}).

\begin{lemma}
\label{lem:vanishing-formal}
If $b_i(X)=0$ for all $1\le i\le q$, then $X$ is $q$-formal.
\end{lemma}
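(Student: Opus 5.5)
The plan is to construct the $q$-minimal model of $X$ explicitly, show that it has no generators in degrees $1$ through $q$, and then compare it with the $q$-minimal model of $(H^*(X;\Q),0)$, which has the same property. Throughout I use the standard definition: the $q$-minimal model $(\bigwedge V^{\le q},d)$ has generators in degrees at most $q$, and its structure map to $A_{PL}(X)$ induces an isomorphism on $H^{\le q}$ and a monomorphism on $H^{q+1}$. Here $\bigwedge V$ denotes the free graded-commutative algebra on $V$. I assume $X$ is path-connected, so $H^0=\Q$.

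\emph{Step 1.} I will show that the $q$-minimal model of $X$ is $(\Q,0)$, the trivial algebra with no generators. By hypothesis $H^i(X;\Q)=0$ for $1\le i\le q$. The map $\Q\to A_{PL}(X)$ therefore induces an isomorphism in degrees $\le q$: both sides are $\Q$ in degree $0$ and zero in degrees $1,\dots,q$. In degree $q+1$ the source is zero, so the induced map is trivially injective. Hence $(\Q,0)$ satisfies the defining properties of a $q$-minimal model, and by uniqueness up to isomorphism it is \emph{the} $q$-minimal model of $X$.

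\emph{Step 2.} The same argument applies verbatim to the cdga $(H^*(X;\Q),0)$. Its cohomology is $H^*(X;\Q)$ itself, which vanishes in degrees $1,\dots,q$, so its $q$-minimal model is also $(\Q,0)$. The two $q$-minimal models are therefore isomorphic, and this is exactly the definition of $q$-formality recalled above.

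\emph{Potential obstacle.} The only delicate point is bookkeeping, because the literature uses more than one convention for "$q$-minimal model." Under one convention, generators lie in degrees $\le q$ and one requires $H^{\le q}$-iso and $H^{q+1}$-mono. Under another (Papadima--Suciu style), one requires a map inducing an isomorphism on $H^{\le q}$ and a monomorphism on $H^{q+1}$, with the model generated in degree $\le q$. In either convention the verification above goes through: the only candidate generators would sit in degrees $1,\dots,q$, and none are needed, because the cohomology vanishes there. If instead the convention asks for a zigzag of $q$-equivalences, then the unit maps $\Q\to A_{PL}(X)$ and $\Q\to (H^*(X),0)$ already furnish such a zigzag directly.
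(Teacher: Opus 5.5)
Your proposal is correct and follows essentially the same argument as the paper: the $q$-minimal model of $X$ (and of $(H^*(X;\Q),0)$) is the trivial algebra $(\Q,0)$, so the two agree. You justify this slightly more explicitly than the paper does, by checking directly that the unit map $\Q\to A_{\mathrm{PL}}(X)$ is a $q$-equivalence and invoking uniqueness of $q$-minimal models, where the paper simply asserts that vanishing cohomology leaves no generators in degrees $1,\dots,q$.
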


\begin{proof}
The $q$-minimal Sullivan model of $X$ has no generators in degrees
$1,\dots,q$ (since $H^i(X;\Q)=0$ for those degrees), hence no
relations in degrees $\le q+1$, and is automatically isomorphic
to the $q$-minimal model of $(H^*(X;\Q),0)$.
\end{proof}

We recall the higher operations that detect non-formality \cite{DGMS,Sullivan77}.

\begin{definition}
\label{def:massey}
Let $(A,d)$ be a differential graded algebra over a commutative ring,
and let $a_1,\dots,a_k\in H^*(A)$ have degrees $\deg a_i = n_i$.  A
\emph{defining system} for $\langle a_1,\dots,a_k\rangle$ is a family
of elements $a_{i,j}\in A$, one for each pair $1\le i\le j\le k$ with
$(i,j)\ne(1,k)$, of degree $n_i+\cdots+n_j-(j-i)$, such that each
$a_{i,i}$ is a cocycle representing $a_i$ and
\begin{equation}
\label{eq:defining-system}
  d\,a_{i,j}=\sum_{l=i}^{j-1}\bar a_{i,l}\,a_{l+1,j},
  \qquad \bar a=(-1)^{\deg a}a .
\end{equation}
When a defining system exists, the cocycle
$\sum_{l=1}^{k-1}\bar a_{1,l}\,a_{l+1,k}$ has degree
$L=n_1+\cdots+n_k-(k-2)$, and its class is the corresponding
\emph{value}.  The $k$-fold \emph{Massey product}
$\langle a_1,\dots,a_k\rangle\subseteq H^{L}(A)$ is the set of all
values; it is a coset of an indeterminacy subgroup.  It is
\emph{defined} when a defining system exists and \emph{trivial} when
$0\in\langle a_1,\dots,a_k\rangle$.  For $k=3$ this reads
$\langle a_1,a_2,a_3\rangle=[\,\bar a_1\,a_{2,3}+\bar a_{1,2}\,a_3\,]$
with $d\,a_{1,2}=\bar a_1a_2$ and $d\,a_{2,3}=\bar a_2a_3$.
\end{definition}

The level of formality obstructed by a non-trivial Massey product is
governed by the degree in which it \emph{lands}, not by the degrees of
its defining classes; for $H^1$-classes the two agree---a non-trivial
triple product lands in degree~$2$ and obstructs $1$-formality---but
they diverge in higher degrees, as the next lemma makes precise.

\begin{lemma}
\label{lem:massey-formal}
Let $X$ be a simply-connected space of finite $\Q$-type and let
$\langle a_1,\dots,a_k\rangle$ be a defined Massey product of classes
$a_i\in H^{n_i}(X;\Q)$ with all $n_i\ge 2$, landing in degree
$L=n_1+\cdots+n_k-(k-2)$.  If $X$ is $(L-1)$-formal, then
$\langle a_1,\dots,a_k\rangle$ is trivial.  Equivalently, if $X$ is
$q$-formal then every defined Massey product landing in degree
$\le q+1$ is trivial; a non-trivial Massey product landing in degree
$L$ obstructs $(L-1)$-formality.
\end{lemma}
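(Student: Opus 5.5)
The idea is to work with the minimal model: a $q$-formality hypothesis gives a model that agrees with $(H^*(X;\Q),0)$ through degree $q+1$, and Massey products landing in degree $\le q+1$ can be computed in either one. Since $X$ is simply connected of finite $\Q$-type, it has a minimal Sullivan model $(\Lambda V,d)\to A_{PL}(X)$, a quasi-isomorphism. Massey products are invariant under quasi-isomorphisms of DGAs, as sets: defining systems can be transported along a quasi-isomorphism and lifted back up to coboundaries. So it suffices to compute $\langle a_1,\dots,a_k\rangle$ in $(\Lambda V,d)$. Set $q=L-1$. The $q$-minimal model is $(\Lambda V^{\le q},d)$, and the inclusion $\Lambda V^{\le q}\to\Lambda V$ induces an isomorphism on $H^{\le q}$ and an injection on $H^{q+1}$.

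\textbf{Step 1: every element of the defining system already lives in $\Lambda V^{\le q}$.} A defining system for $\langle a_1,\dots,a_k\rangle$ consists of elements $a_{i,j}$ with $(i,j)\neq(1,k)$. Their degrees are $n_i+\cdots+n_j-(j-i)$, and since all $n_l\ge 2$, each of these is at most $L-1=q$. An element of $\Lambda V$ of degree $\le q$ lies in $\Lambda V^{\le q}$, because $V^1=0$ and no generator of degree $>q$ can occur in it. The value cocycle $\sum_l \bar a_{1,l}a_{l+1,k}$ has degree $L=q+1$ and is a sum of products of elements of $\Lambda V^{\le q}$, so it lies in $\Lambda V^{\le q}$ as well.

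We may therefore compute the Massey product, together with its triviality, in $(\Lambda V^{\le q},d)$. A value is zero in $H^{q+1}(\Lambda V)$ if and only if it is zero in $H^{q+1}(\Lambda V^{\le q})$, by the injectivity above. Conversely, every defining system in $\Lambda V$ lies in $\Lambda V^{\le q}$. The Massey product in $\Lambda V$ is therefore identified with the one in $\Lambda V^{\le q}$ via this injection.

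\textbf{Step 2: use $q$-formality.} By definition there is a zig-zag of $q$-equivalences from $(\Lambda V^{\le q},d)$ to $(H^*(X;\Q),0)$, that is, maps which are isomorphisms on $H^{\le q}$ and injective on $H^{q+1}$. Concretely, we may take a map $\phi\colon(\Lambda V^{\le q},d)\to(H^*(X;\Q),0)$, chosen by sending generators to cohomology classes compatibly. Push a defining system forward along $\phi$. In $(H^*,0)$ there is zero differential, so the image elements $\phi(a_{i,j})$ with $j>i$ are cocycles satisfying $0=\sum\bar\phi(a_{i,l})\phi(a_{l+1,j})$, and the value maps to $\sum\bar\phi(a_{1,l})\phi(a_{l+1,k})\in H^{L}$.

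The key point is that in $(H^*,0)$ one can alter a defining system. The trick is to run the argument backwards. In $(H^*,0)$ the defining system with $a_{i,i}=a_i$ and $a_{i,j}=0$ for $j>i$ is admissible, because all products $a_ia_{i+1}\cdots$ of adjacent pairs must vanish for the product to be defined. Its value is $0$. Lift this system through the $q$-equivalence. Degrees $\le q$ are matched isomorphically on cohomology, and a standard induction on $j-i$ shows that defining systems correspond up to changes of the $a_{i,j}$ by cocycles plus coboundaries, exactly as in the quasi-isomorphism invariance argument truncated to degrees $\le q$. This produces a defining system in $\Lambda V^{\le q}$ whose value maps to $0$ in $H^{q+1}$. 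Injectivity on $H^{q+1}$ then forces the value itself to be $0$, so the Massey product is trivial.

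\textbf{Expected obstacle.} The main difficulty is the lifting induction in Step 2, namely showing that a defining system can be transported backwards along a map that is only a $q$-equivalence. At each stage we need to solve $d\,a_{i,j}=(\text{given cocycle})$. That cocycle has degree $\le q$ whenever $(i,j)\ne(1,k)$, so we need exactly the isomorphism on $H^{\le q}$, plus injectivity on $H^{\le q}$ to recognise its image as exact. The final value, in degree $q+1$, needs only injectivity. The strict hypotheses $n_i\ge 2$ make all these degree bounds work; without them one would only get $(L-1)$ replaced by something smaller. The equivalent reformulation in the statement then follows by taking $q=L-1$.
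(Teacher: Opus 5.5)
Your proposal is correct. Step~1 matches the paper's opening move: the defining system and its value lie in $\M_q=(\Lambda V^{\le q},d)$, whose inclusion into $\M$ is injective on $H^{q+1}$. Your Step~2, however, takes a different route.

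The paper pushes the given value $[\Phi]$ forward along a $q$-equivalence $\psi\colon\M_q\to(H^*(X;\Q),0)$. It notes that every Massey product in $(H,0)$ contains $0$, so $\psi_*[\Phi]$ lies in the indeterminacy there. It then pulls this back, using that $\psi_*$ is bijective in degrees $\le q$ and injective in degree $q+1$.

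You instead lift the zero defining system ($\phi_*(a_i)$ on the diagonal, $0$ elsewhere) back to $\M_q$ by induction on $j-i$. Each obstruction cocycle $\sum_l\bar b_{i,l}b_{l+1,j}$ maps to $0$ under $\phi$, so it is exact by injectivity. Its primitive is then corrected by a cocycle, using surjectivity, so that it also maps to $0$. The final value maps to $0$ and therefore vanishes by injectivity on $H^{q+1}$.

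What your route buys is uniformity in $k$: it never uses the shape of the indeterminacy. For $k\ge4$ the set of values need not be a coset of a subgroup, so the paper's step ``$\psi_*[\Phi]$ lies in the indeterminacy, hence so does $[\Phi]$'' is only transparent for $k=3$. Your argument also shows more: under $(L-1)$-formality, the vanishing of the adjacent products $a_ia_{i+1}$ alone makes the product defined and containing $0$. In particular, your Step~1 is not actually needed. The paper's version is quicker for triple products.

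Two minor points:
\begin{enumerate}
\item Your claim that the obstruction cocycles have degree $\le q$ needs the sharper bound $\deg a_{i,j}\le L-2=q-1$, not the $\le L-1$ stated in your Step~1. The sharper bound holds, since $\deg a_{i,j}=1+\sum_{l=i}^{j}(n_l-1)$ while $L=2+\sum_{l=1}^{k}(n_l-1)$. In any case, injectivity on $H^{q+1}$ and surjectivity on $H^{q}$ would absorb the borderline case.
\item ``Sending generators to cohomology classes compatibly'' does not construct $\phi$. Its existence comes from $q$-formality: compose an isomorphism of $\M_q$ with the $q$-minimal model of $(H^*(X;\Q),0)$ with that model's structure map. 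The paper likewise just asserts that such a map exists.
\end{enumerate}
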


\begin{proof}
Let $\varphi\colon\M=(\Lambda V,d)\isom A_{\mathrm{PL}}(X)$
be the minimal model and $\M_q=(\Lambda V^{\le q},d)$ its
$q$-minimal model.  As $X$ is simply connected,
$d(V^j)\subseteq\Lambda^{\ge 2}(V^{\le j-1})$, so $\M_q$ is a
sub-cdga whose inclusion into $\M$ is an isomorphism on
$H^{\le q}$ and a monomorphism on $H^{q+1}$ \cite{PS-formal}.  Set
$q=L-1$.

Since $\varphi$ is a quasi-isomorphism, $\langle a_1,\dots,a_k\rangle$
may be computed in $\M$.  Each entry of a defining system has
degree $\deg a_{i,j}=1+\sum_{l=i}^{j}(n_l-1)$; for $(i,j)\ne(1,k)$ the
missing index lowers this by $n_l-1\ge 1$ below
$L=1+\sum_{l=1}^{k}(n_l-1)$, so $\deg a_{i,j}\le L-1=q$.  An element of
$\M$ of degree $\le q$ lies in $\Lambda V^{\le q}=\M_q$,
so the entire defining system and a representing cocycle $\Phi$ of
degree $L=q+1$ lie in $\M_q$; thus $[\Phi]\in H^{q+1}(\M_q)$ 
represents a value of the product under the inclusion.

If $X$ is $(L-1)$-formal there is a $q$-equivalence
$\psi\colon\M_q\to(H^*(X;\Q),0)$.  Every Massey product in a
cdga with zero differential contains $0$, so $\psi_*[\Phi]$ lies in the
indeterminacy of $\langle a_1,\dots,a_k\rangle$ in $(H,0)$---a subspace
assembled from cohomology in degrees $\le q$, on which $\psi_*$ is an
isomorphism.  As $\psi_*$ is also injective on $H^{q+1}$, the class
$[\Phi]$ already lies in the indeterminacy inside $\M_q$;
transporting along the inclusion (an isomorphism below degree $q+1$ and
injective in degree $q+1$) shows that $\langle a_1,\dots,a_k\rangle$
contains $0$ in $X$.
\end{proof}

\subsection{Finite group actions and formality}
\label{subsec:group-formality}

The following proposition generalizes a result of Dimca and Papadima
\cite{DP-pisa}; notably, the action of $G$ need not be free.

\begin{prop}
\label{prop:DP}
Let $G$ be a finite group acting on a connected $G$-CW complex $Y$,
and let $X=Y/G$. Assume $X$ is $q$-formal over $\Q$.
\begin{enumerate}[itemsep=1pt]
\item \label{f1}
If $G$ acts trivially on $H^i(Y;\Q)$ for all $i\le q$, then $Y$ is
$q$-formal. 

\item \label{f2}
If moreover $Y$ has the homotopy type of a CW-complex of
dimension $\le q+1$, then $Y$ is formal.
\end{enumerate}
\end{prop}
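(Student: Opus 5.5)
The argument rests on two standard facts about a finite group $G$ acting on $Y$ with quotient $X=Y/G$. The first is the transfer: over $\Q$, the projection $\pi\colon Y\to X$ induces an isomorphism $H^*(X;\Q)\cong H^*(Y;\Q)^G$, and this holds even when the action is not free, since $\Q$ is a field of characteristic zero and $|G|$ is invertible. The second is equivariant Sullivan theory. The cochain algebra $A_{\mathrm{PL}}(Y)$ carries a $G$-action, and $A_{\mathrm{PL}}(Y)^G$ is quasi-isomorphic to $A_{\mathrm{PL}}(X)$. Taking invariants is exact here by the averaging (Reynolds) operator, and one can compare simplicial or cellular cochains on $Y/G$ with invariant cochains on $Y$.

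For part~\eqref{f1}, I will use the characterization of $q$-formality through a zig-zag of cdga maps that induce isomorphisms on $H^{\le q}$ and monomorphisms on $H^{q+1}$. The key point is that, in degrees $\le q$, trivial action gives $H^{i}(Y)^G=H^i(Y)$. So the map $\pi^*\colon A_{\mathrm{PL}}(X)\simeq A_{\mathrm{PL}}(Y)^G\to A_{\mathrm{PL}}(Y)$ is an isomorphism on $H^{\le q}$. On $H^{q+1}$ it is the inclusion $H^{q+1}(Y)^G\hookrightarrow H^{q+1}(Y)$, which is injective. Hence $\pi^*$ is a $q$-equivalence.

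Now compose with the given $q$-formality zig-zag for $X$. This yields a zig-zag of $q$-equivalences between $A_{\mathrm{PL}}(Y)$ and $(H^*(X),0)=(H^*(Y)^G,0)$. One more $q$-equivalence is needed, namely the inclusion $(H^*(Y)^G,0)\to(H^*(Y),0)$. It is an isomorphism in degrees $\le q$ and injective in degree $q+1$, and it is a cdga map. Since $q$-equivalences compose and $q$-minimal models are invariant under them, the $q$-minimal models of $Y$ and of $(H^*(Y),0)$ agree. I would phrase this step concretely by passing to $q$-minimal models, lifting maps along the zig-zag, and checking the relevant isomorphisms on $H^{\le q}$ and monomorphisms on $H^{q+1}$.

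For part~\eqref{f2}, I will invoke the standard fact that a space of the homotopy type of a CW-complex of dimension $\le q+1$ which is $q$-formal is formal. This is the Macinic / Papadima--Suciu theorem \cite{PS-formal}. The reason is that the $q$-minimal model, together with the vanishing of cohomology above $q+1$, determines the full minimal model up to the formal one. The hypotheses of part~\eqref{f1} give $q$-formality of $Y$, so formality follows.

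The main obstacle is the identification $A_{\mathrm{PL}}(X)\simeq A_{\mathrm{PL}}(Y)^G$ as cdgas, that is, as a quasi-isomorphism that is multiplicative for a non-free action. I would first try the $G$-CW structure: $X$ inherits a CW structure, and $A_{\mathrm{PL}}$ of a simplicial $G$-complex (after subdivision) satisfies $A_{\mathrm{PL}}(Y)^G\cong A_{\mathrm{PL}}(Y/G)$ on the level of simplicial polynomial forms. This works once the action is regular in Bredon's sense, which can be arranged by barycentric subdivision. Exactness of invariants over $\Q$ then gives that the cohomology of the invariants equals the invariants of the cohomology.
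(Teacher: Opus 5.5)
Your proposal is correct and follows the paper's route. In part~(1), the transfer makes $\pi^*$ a $q$-equivalence, and $q$-formality passes from $X$ to $Y$ through $q$-minimal models; your explicit $q$-equivalence $(H^*(Y)^G,0)\hookrightarrow(H^*(Y),0)$ is slightly more careful than the paper's appeal to $H^{\le q}(X)\cong H^{\le q}(Y)$ alone. In part~(2), both you and the paper rest on Macinic's theorem.

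The cochain-level identification $A_{\mathrm{PL}}(X)\simeq A_{\mathrm{PL}}(Y)^G$ that you flag as the main obstacle is never needed. The map $\pi^*\colon A_{\mathrm{PL}}(X)\to A_{\mathrm{PL}}(Y)$ is already a cdga map, and the argument uses only its effect on cohomology, which the cohomological transfer $H^*(X;\Q)\cong H^*(Y;\Q)^G$ supplies.
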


\begin{proof}
Let $p\colon Y\to X$ be the quotient map.
Since $G$ is finite and $\Q$ has characteristic zero, the
transfer homomorphism $\mathrm{tr}\colon H^*(Y;\Q)\to H^*(X;\Q)$
satisfies $\mathrm{tr}\circ p^* = |G|\cdot\mathrm{id}$; this uses
only that $Y$ is a $G$-CW complex, not that the action is free.
Hence $p^*\colon H^i(X;\Q)\isom H^i(Y;\Q)^G$ is an
isomorphism for all~$i$.
Under the hypothesis that $G$ acts trivially on $H^i(Y;\Q)$ for
$i\le q$, the fixed-part inclusion becomes an isomorphism
$p^*\colon H^i(X;\Q)\isom H^i(Y;\Q)$ for $i\le q$,
and a monomorphism in degree $q+1$.

At the CDGA level, the map $p^*\colon A_{\mathrm{PL}}(X;\Q)\to
A_{\mathrm{PL}}(Y;\Q)$ induces a morphism of $q$-minimal models
$\M_q(X)\to\M_q(Y)$ which is a $q$-equivalence (isomorphism on
cohomology through degree~$q$, monomorphism in degree $q+1$),
by the cohomology comparison above and the uniqueness of
$q$-minimal models.
Since $X$ is $q$-formal, $\M_q(X)\cong\M_q(H^*(X;\Q),0)$.
The $q$-equivalence $\M_q(X)\to\M_q(Y)$ therefore gives
\[
  \M_q(Y)\simeq_q \M_q(X) \cong \M_q(H^*(X;\Q),0)
  \cong \M_q(H^*(Y;\Q),0),
\]
where the last isomorphism uses $H^{\le q}(X;\Q)\cong H^{\le q}(Y;\Q)$.
Hence $Y$ is $q$-formal, proving~\eqref{f1}.

For~\eqref{f2}: if $\dim Y\le q+1$, then the $q$-minimal model
$\M_q(Y)$ already contains all generators of the full minimal
model~$\M(Y)$ (there are no generators of degree $>q+1$ in a
$(q+1)$-dimensional complex), so $\M(Y)\cong\M_q(Y)\cong
\M_q(H^*(Y;\Q),0) = \M(H^*(Y;\Q),0)$, and $Y$ is formal 
(see \cite[Prop.~3.4]{Mac}).
\end{proof}

\subsection{Milnor fibers of hyperplane arrangements}
\label{subsec:formality-mf-arr}

Let $\A$ be a central arrangement of $n$ hyperplanes in $\C^{d+1}$ 
($d\ge 1$). The complement $M(\A)=\C^{d+1}\setminus \bigcup_{H\in\A}H$
is a connected, smooth quasi-projective variety. We let 
$U(\A)=\P(M(\A))\subset \CP^d$ be its projectivization. 
Since $M(\A)$ is formal by Brieskorn's theorem and $\C^*$ is formal,
it follows that $U(\A)$ is formal.

For each $H\in\A$, choose a linear form $\alpha_H$ with
$H=\ker(\alpha_H)$, and set $f=\prod_{H\in\A}\alpha_H$.
Then $f$ is homogeneous of degree $n$, and its Milnor fiber
$F(\A)=f^{-1}(1)$ is a connected, finite CW-complex of dimension $d$.
Moreover, $F(\A)$ is a cyclic regular cover of $U(\A)$ with deck group
$\Z/n\Z$.

\begin{corollary}[\cite{DP-pisa}]
\label{cor:DP}
If the algebraic monodromy acts trivially on
$H^{\le q}(F(\A);\Q)$, then $F(\A)$ is $q$-formal.
If this holds for all $i\le d-1$, then $F(\A)$ is formal.
\end{corollary}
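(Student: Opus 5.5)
The plan is to apply Proposition~\ref{prop:DP} with $Y=F(\A)$, $G=\Z/n\Z$ acting by the geometric monodromy $h$, and $X=U(\A)$. First I will verify the hypotheses. $F(\A)$ is a smooth affine variety, so it admits a $G$-CW structure. For instance, a $G$-invariant triangulation exists because $G$ is finite and acts by algebraic, hence semi-algebraic, automorphisms. In the homogeneous case $h$ is scalar multiplication by $\zeta_n$, so the action is free and $F(\A)\to F(\A)/G\cong U(\A)$ is the regular $n$-fold cyclic cover recalled in \S\ref{subsec:mf}. The base $U(\A)$ is formal, as observed just before the statement, so it is $q$-formal for every $q$.

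Next I will translate the monodromy hypothesis. The action of the generator of $G$ on $H^i(F(\A);\Q)$ is $h^*$, the dual of the algebraic monodromy $h_i$ on homology. Hence trivial monodromy on $H^{\le q}(F(\A);\Q)$ is exactly the statement that $G$ acts trivially on $H^i(Y;\Q)$ for $i\le q$. Part~\eqref{f1} of Proposition~\ref{prop:DP} then gives $q$-formality of $F(\A)$.

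For the second assertion, I take $q=d-1$. By \S\ref{subsec:formality-mf-arr}, $F(\A)$ is a finite CW-complex of dimension $d=q+1$. (Alternatively, it is a $d$-dimensional Stein manifold, which has the homotopy type of a CW-complex of dimension $\le d$ by Andreotti--Frankel.) So the dimension hypothesis of part~\eqref{f2} holds, and $F(\A)$ is formal.

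The only step needing any care is the first: making sure the free $\Z/n$-action is compatible with a $G$-CW structure. One way is to pull back a CW structure on the compact-core model of $U(\A)$ along the covering map. The cleaner alternative is to use that the cover is unramified, so an equivariant triangulation is obtained by lifting a triangulation of $U(\A)$. Everything else is a direct substitution into Proposition~\ref{prop:DP}.
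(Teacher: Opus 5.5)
Your proposal is correct and follows the paper's proof. The paper likewise identifies the monodromy action with the deck-group action of $\Z/n\Z$ on $H^*(F(\A);\Q)$ and applies Proposition~\ref{prop:DP} with $Y=F(\A)$ and $X=U(\A)$, using that $U(\A)$ is formal; it implicitly uses part~\eqref{f2} with $q=d-1$ and $\dim F(\A)=d$ for the second assertion. Your extra step of building a $G$-CW structure by lifting a triangulation of $U(\A)$ along the unramified cover spells out a detail the paper leaves implicit.
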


\begin{proof}
Triviality of algebraic monodromy is equivalent to triviality of the
deck group action on $H^*(F(\A);\Q)$. Apply
Proposition~\ref{prop:DP} with $Y=F(\A)$ and $X=U(\A)$, using that
$U(\A)$ is formal and the covering is finite cyclic.
\end{proof}

In particular, non-formality of $F(\A)$ implies non-triviality of
the algebraic monodromy in some degree.

The first example of a non-formal Milnor fiber is due to
Zuber~\cite{Zu}: $F(\A(3,3,3))$ is not $1$-formal.  The argument
proceeds as follows: a rational pencil on the complement
$M(\A(3,3,3))$ lifts to an \emph{irrational} pencil on the Milnor
fiber (via a $3$-fold intermediate cover), producing a
$4$-dimensional subtorus of $\VV^1_1(F)$; a mixed Hodge
structure argument then shows this subtorus cannot be of the form
$\exp(L)$ for any $L\subset\RR^1_1(F)$, violating the Tangent
Cone Theorem~\cite{DPS-duke} and forcing non-$1$-formality.
Non-trivial algebraic monodromy ($\Delta_1(t)\ne(t-1)^8$) is a
byproduct.  The fiber is not simply connected.

A family of non-formal Milnor fibers for arrangements
supporting a reduced $3$-multinet is constructed in
\cite{Su-mfnf}; non-formality comes from non-trivial monodromy
detected via the tangent cone to the characteristic variety.
In both families, $F(\A)$ is \emph{not} simply connected.

For a \emph{multi-arrangement} $(\A,\bm)$ with multiplicity
vector $\bm=(m_H)_{H\in\A}\in\Z_{>0}^{\A}$, one considers
$f_{\bm}=\prod_{H\in\A}\alpha_H^{m_H}$, homogeneous of degree
$N=\sum_H m_H$; its Milnor fiber $F_{\bm}(\A)=f_{\bm}^{-1}(1)$
is a regular $\Z_N$-cover of $U(\A)$, with the algebraic monodromy
defined analogously. For decomposable arrangements $\A$ 
(in the sense of \cite{PS-cmh06}), the monodromy acts trivially
on $H_1(F_{\bm}(\A);\Q)$ for any $\bm$ satisfying a mild
condition \cite[Thm.~1.3]{Su-revroum}, and
Corollary~\ref{cor:DP} then implies $1$-formality.

\begin{question}
\label{q:massey-H2}
Is there a hyperplane arrangement $\A$ for which $H^*(F(\A);\Q)$
carries a non-trivial Massey product on classes in $H^2(F(\A);\Q)$?
\end{question}

\subsection{The Fern\'andez de Bobadilla realization theorem}
\label{subsec:FdB}

The known constructions of non-formal Milnor fibers for hyperplane
arrangements rely on non-trivial algebraic monodromy and yield
examples that are not simply connected. This leaves open the
problem of producing non-formal Milnor fibers with trivial
monodromy and higher connectivity.

A different approach, due to Fern\'andez de Bobadilla \cite{FdB}, 
provides such examples by realizing Milnor fibers as complements of
analytic germs, without passing through a covering construction.
The constructions of this paper rest on a realization theorem of
Fern\'andez de Bobadilla, which identifies the Milnor fiber of a
specific family of weighted-homogeneous polynomials with the
complement of an analytic germ.

Let $\mathcal{O}_{\C^n,O}$ denote the ring of germs of holomorphic
functions at the origin of $\C^n$, and let
$I = (f_1,\dots,f_r) \subset \mathcal{O}_{\C^n,O}$ be an ideal.
Define the polynomial
\[
\Phi_I(x_1,\dots,x_n,\, y_1,\dots,y_r) = \sum_{i=1}^r y_i f_i(x_1,\dots,x_n).
\]

\begin{theorem}[\cite{FdB}, Thm.~1]
\label{thm:FdB}
The Milnor fiber of $\Phi_I$ at the origin is homotopy equivalent
to the complement of the germ $V(I)$ in a small ball:
\[
\Phi_I^{-1}(1) \simeq B_\varepsilon \setminus V(I)
\]
for sufficiently small $\varepsilon > 0$.  Moreover, the geometric
monodromy of $\Phi_I$ is trivial.
\end{theorem}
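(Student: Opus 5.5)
The strategy is to treat $\Phi_I$ as a weighted-homogeneous polynomial and reduce the statement to a fibration over the $x$-space. I would give each $x_j$ weight $1$ if the $f_i$ are homogeneous; in general I would pass to a small ball and work with the local Milnor fiber $\Phi_I^{-1}(t)\cap B_\varepsilon$. Consider the projection
\[
  \pi\colon \Phi_I^{-1}(1)\longrightarrow \C^n,\qquad (x,y)\longmapsto x .
\]
For fixed $x$, the fiber $\pi^{-1}(x)=\{y : \sum_i y_i f_i(x)=1\}$ is an affine hyperplane in $\C^r$ when $(f_1(x),\dots,f_r(x))\ne 0$, and it is empty when $x\in V(I)$. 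So over $U=B_\varepsilon\setminus V(I)$ the map $\pi$ is an affine-space bundle with fiber $\C^{r-1}$. It is contractible, hence a homotopy equivalence onto $U$. Concretely, the map $s(x)=\overline{f(x)}/\|f(x)\|^2$ is a section, and the fiberwise straight-line homotopy to $s$ deformation retracts the total space onto the image of $s$, which is homeomorphic to $U$.

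The main obstacle is matching the local Milnor fiber $\Phi_I^{-1}(t)\cap B_\delta$ with this global description while keeping $x$ in a small ball and $y$ unbounded. I would handle this with a rescaling. Using the $\C^*$-action $y\mapsto \lambda y$, the fiber over $t$ in a polydisc $\{\|x\|<\varepsilon,\ \|y\|<R\}$ can be compared with the fiber over $1$. Choose $\varepsilon$ small enough that $V(I)$ is conic in $B_\varepsilon$ (the conic structure theorem). Then choose $t\ll\varepsilon$, so that the condition $\|y\|<R$ only cuts away points with $\|f(x)\|\lesssim |t|/R$, that is, a small tubular neighbourhood of $V(I)$. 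Removing a neighbourhood of $V(I)\cap B_\varepsilon$ does not change the homotopy type of the complement, by the standard Łojasiewicz/conic-structure argument that $B_\varepsilon\setminus V(I)$ retracts onto $B_\varepsilon\setminus \{\|f\|<\eta\}$ for small $\eta$. Independence of the choice of polydisc versus ball follows from the usual Lê/Milnor fibration theorem: polydisc Milnor fibers are fiber-homotopy equivalent to ball Milnor fibers, since $\Phi_I$ is a polynomial whose fibers behave well at infinity in the $y$-direction.

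For the monodromy, I would use that the fibration over a small circle $|t|=\eta$ is trivialized by the map
\[
  (x,y)\longmapsto (x,\, e^{i\theta}y),
\]
which carries $\Phi_I^{-1}(t)$ to $\Phi_I^{-1}(e^{i\theta}t)$ and preserves the region $\{\|x\|<\varepsilon\}$ together with the bound $\|y\|<R$. Letting $\theta$ run from $0$ to $2\pi$ produces a geometric monodromy equal to the identity. In the weighted-homogeneous setting this is exactly the diagonal monodromy $h$ of \S\ref{subsec:mf} with weights chosen as $0$ on the $x$-variables and $1$ on the $y$-variables, up to isotopy. For this reason I would phrase the whole argument with these weights, which makes $\Phi_I$ homogeneous of degree $1$ in $y$ and makes the monodromy trivial on the nose.
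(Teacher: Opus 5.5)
Your monodromy argument is the paper's own. The rotation $(x,y)\mapsto(x,e^{2\pi i\theta}y)$ carries the fibre over $\delta$ to the fibre over $e^{2\pi i\theta}\delta$, preserves the Milnor balls (and your polydiscs), and is the identity at $\theta=1$.

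For the homotopy equivalence the paper gives no proof. It cites \cite{FdB}, and in the weighted-homogeneous case it passes from the local to the global fibre using the contracting weighted $\C^*$-action. Your route is a genuinely different, self-contained one: project to $x$-space, note that the fibres are affine hyperplanes (or convex slices of them), retract onto the nearest-point section $t\,\overline{f(x)}/\|f(x)\|^2$, and reach the local fibre by rescaling $y$ plus a \L{}ojasiewicz regular-neighbourhood argument for $\{\|f\|\le\eta\}$. It also buys something concrete. When the $f_i$ are weighted-homogeneous polynomials, the only case used later (Corollary~\ref{cor:FdB-ZK}), run your first paragraph over all of $\C^n$ instead of $B_\varepsilon$. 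That proves $\Phi_I^{-1}(1)\simeq\C^n\setminus V(I)$ directly, with no local analysis and no local-to-global comparison. The citation, in turn, covers arbitrary germs without the analytic bookkeeping you defer.

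Three parts of that bookkeeping should be stated correctly.

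\emph{The effect of the bound $\|y\|<R$.} It does not merely discard points over a neighbourhood of $V(I)$; it cuts every fibre down to a ball slice. The argument survives because the slice is convex and contains $t\,\overline{f(x)}/\|f(x)\|^2$ exactly when $\|f(x)\|>|t|/R$. So the polydisc fibre retracts onto $\{x\in B_\varepsilon:\|f(x)\|>|t|/R\}$.

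\emph{The polydisc--ball comparison.} This has nothing to do with behaviour at infinity in $y$. What you need is transversality of $\Phi_I^{-1}(t)$ to the faces $\{\|x\|=\varepsilon\}$ and $\{\|y\|=R\}$ and to their corner. A Milnor-type computation shows this can fail only over points $x$ that are critical for $\|f\|^2$ or for $\|f\|^2|_{S_\varepsilon}$ and satisfy $\|f(x)\|=|t|/R$. The same \L{}ojasiewicz inequality excludes such points for small $|t|$. Then interleave balls and polydiscs, using that both families of fibres are stable under shrinking.

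\emph{The weight-$0$ remark.} Weight $0$ on the $x_j$ lies outside the positive-weight setting of \S\ref{subsec:mf}. The accurate statement is that the actual diagonal monodromy $h$ is isotopic to the identity on $F$ through the maps $(x,y)\mapsto(\lambda^{w(x_j)}x_j,\lambda^{w(y_i)-d}y_i)$, with $\lambda=e^{2\pi i\theta/d}$ and $0\le\theta\le1$.
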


The geometric monodromy is here trivial in the strong sense that it
can be realized by the identity diffeomorphism of $F$, not merely up
to homotopy or homology.  The proof is immediate from
the linearity of $\Phi_I$ in the variables $y_1,\dots,y_r$: the
family of diffeomorphisms $\varphi_\theta(x,y)=(x,e^{2\pi i\theta}y)$
carries the fiber over any $\delta\in\C^*$ to the fiber over
$e^{2\pi i\theta}\delta$, and equals the identity when $\theta=1$
\cite{FdB}.

When the generators $f_i$ of $I$ are weighted-homogeneous with
respect to a common weight vector on $x_1,\dots,x_n$, the
polynomial $\Phi_I$ is itself weighted-homogeneous (by assigning
each $y_i$ the complementary weight).
In this case, the local and global Milnor fibers coincide.
The reason is transparent: the weighted $\C^*$-action
\[
  \lambda\cdot(x,y)
  =\bigl(\lambda^{w(x_j)}x_j,\,\lambda^{w(y_i)}y_i\bigr)
\]
scales $\Phi_I$ by $\lambda^d$ (where $d$ is the weighted degree),
so $V(\Phi_I)$ is $\C^*$-invariant.
The dilation $\lambda\mapsto\lambda^{1/d}$ therefore carries
the local fiber $\Phi_I^{-1}(\delta)\cap B_\varepsilon$
diffeomorphically onto the global fiber $\Phi_I^{-1}(1)$
for any $\delta\ne 0$ and any $\varepsilon>0$
(see also \cite{Oka73}).
Thus $F(\Phi_I)\simeq\C^n\setminus V(I)$.

\subsection{The Kato--Matsumoto connectivity bound}
\label{subsec:KM}

For a holomorphic germ $f\colon(\C^N, 0)\to(\C, 0)$ with non-isolated
singularity, the Kato--Matsumoto theorem gives a lower bound on
the connectivity of the Milnor fiber in terms of the dimension of
the singular locus.

\begin{theorem}[\cite{KM75}]
\label{thm:KM}
Let $f\colon(\C^N,0)\to(\C,0)$ be a holomorphic  germ,
and let $s = \dim_0\Sing(f)$ be the dimension of
the critical set at the origin.  Then the Milnor fiber $F$ is
$(N-s-2)$-connected; in particular,
$\widetilde{H}_j(F;\Z) = 0$ for all $j < N-s-1$.
\end{theorem}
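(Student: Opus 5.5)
The plan is to argue by induction on $s=\dim_0\Sing(f)$. For the base case $s\le 0$ (isolated singularity, or a smooth point), the bound $N-s-2=N-2$ is Milnor's classical theorem: the Milnor fiber of an isolated hypersurface singularity is $(N-2)$-connected, and is in fact a bouquet of $(N-1)$-spheres. I would quote this from \cite{Milnor}. Throughout, I would use that $F$ is a Stein manifold of complex dimension $N-1$. By Andreotti--Frankel it therefore has the homotopy type of a CW complex of dimension $\le N-1$, which fixes the range in which cells can appear.

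For the inductive step with $s\ge 1$, choose a generic linear form $\ell$ on $\C^N$ and set $H=\{\ell=0\}$. There are two geometric inputs.
\begin{enumerate}[itemsep=1pt]
\item For generic $H$, one has $\dim_0\Sing(f|_H)=s-1$. The inclusion $\Sing(f|_H)\supseteq \Sing(f)\cap H$ has excess only along the polar locus, and for generic $H$ the relevant locus has dimension $s-1$ by a dimension count on the conormal variety.
\item The Milnor fiber of $f|_H$, a germ in $N-1$ variables, can be identified with $F\cap\{\ell=t\}$ for small $t$.
\end{enumerate}
The induction hypothesis then gives that $F\cap H$ is $\bigl((N-1)-(s-1)-2\bigr)=(N-s-2)$-connected.

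The key step is Lê's attaching theorem (the ``carousel'' argument). For generic $\ell$, the Milnor fiber $F$ is obtained up to homotopy from the hyperplane section $F\cap\{\ell=t\}$ by attaching cells of dimension exactly $N-1$. The number of cells equals the intersection multiplicity of the polar curve $\Gamma_\ell$ with $\{f=\delta\}$. To prove this I would study the restriction $\ell\colon F\to D$ to a small disc. Outside the finitely many points where $\Gamma_\ell$ meets $F$, this map is a locally trivial fibration, using Thom's $a_f$-condition and a Whitney stratification of $V(f)$. Each critical point is a non-degenerate complex Morse point of $\ell|_F$ on a complex $(N-1)$-manifold, so it contributes a handle of index $N-1$. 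Since $D$ is contractible, $F$ is homotopy equivalent to the fiber over one point with these $(N-1)$-handles attached.

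Granting this, attaching $(N-1)$-cells cannot change $\pi_j$ for $j\le N-3$. Hence $F$ is $\min(N-s-2,\,N-3)$-connected, and this equals $N-s-2$ because $s\ge1$. This completes the induction. The main obstacle is making the attaching theorem rigorous for non-isolated singularities. One must check three things: that the polar curve is a genuine curve (or empty) for generic $\ell$; that $\ell|_F$ has only Morse critical points near the origin; and that no critical behaviour escapes to the boundary sphere. The last point is exactly where the $a_f$-condition for the stratification is needed (Hironaka/Lê). My first attempt would be to use $(f,\ell)\colon(\C^N,0)\to(\C^2,0)$ and its discriminant, following Lê's proof, and reduce everything to the genericity statements for the polar curve.
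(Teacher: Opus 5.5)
The paper does not prove this statement. Theorem~\ref{thm:KM} is taken from \cite{KM75} as a black box and used only as a benchmark: the paper's own work is the computation of $s=\dim\Sing(V(\Phi_K))$ in Proposition~\ref{prop:sing}, and the connectivity of $F_K$ comes from Theorem~\ref{thm:ZZ}.

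Your proposal is a different but correct route: induction on $s$ through a generic hyperplane section, driven by L\^e's attaching theorem. It is the standard derivation and holds modulo the deep inputs you quote.
\begin{itemize}
\item For generic $\ell$, the relative polar curve $\Gamma_\ell$ is a curve (or empty) meeting $H=\{\ell=0\}$ only at $0$. Hence $\Sing(f|_H)=H\cap(\Sing(f)\cup\Gamma_\ell)$ has dimension $\le s-1$ at $0$, and the inequality is all you need.
\item The Milnor fiber can be computed in the polydisc $B_\epsilon\cap\{|\ell|\le\eta\}$ with $0<|\delta|\ll\eta\ll\epsilon$. There $\ell|_F$ is locally trivial over the disc minus finitely many critical values.
\end{itemize}

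Compared with a bare citation, your route gives more. Iterate along a generic flag down to an isolated singularity in $N-s$ variables, and apply Milnor's bouquet theorem \cite{Milnor}. This shows that $F$ is homotopy equivalent to a wedge of $(N-s-1)$-spheres with cells of dimensions $N-s,\dots,N-1$ attached. That single statement yields both the Kato--Matsumoto bound and the dimension bound. The price is that L\^e's theorem is at least as deep as the statement being proved.

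A few small repairs:
\begin{itemize}
\item The base case is $s=0$ only. For $s=-1$ the claimed bound is $N-1$, not $N-2$; this is harmless because the fiber at a smooth point is contractible.
\item What you describe is a Lefschetz-pencil argument for $\ell|_F$; the carousel is not needed for it.
\item Morse non-degeneracy of the critical points of $\ell|_F$ can be dropped. An isolated critical point $p$ of a holomorphic function on an $(N-1)$-dimensional complex manifold already contributes $\mu_p$ cells of dimension $N-1$. These are the cone on its local Milnor fiber, which is a wedge of $\mu_p$ copies of $S^{N-2}$ by Milnor's theorem.
\item The Andreotti--Frankel remark plays no role in the argument.
\end{itemize}
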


A key feature used throughout this paper is that for a
weighted-homogeneous polynomial $f$, the local and global Milnor
fibers are diffeomorphic (see \S\ref{subsec:FdB}); hence
Theorem~\ref{thm:KM} bounds the connectivity of the global Milnor
fiber in terms of $s=\dim_0\Sing(f)$, computed from the Jacobian
ideal.  In Section~\ref{sec:examples} we carry this out for the
families constructed in this paper, comparing the resulting
Kato--Matsumoto bound with their actual connectivity.

\section{Polyhedral products and Massey products}
\label{sec:polyhedral}

\subsection{Polyhedral products and moment-angle complexes}
\label{subsec:mac}

Let $K$ be a simplicial complex on vertex set $[m]=\{1,\dots,m\}$.
For pairs of spaces $(X_i,A_i)$ with $A_i\subset X_i$, the associated
\emph{polyhedral product} is the subspace
\begin{equation}
\label{eq:zk-xa}
\ZK(\underline{X},\underline{A}) = 
\bigcup_{\sigma\in K}(\underline{X},\underline{A})^{\sigma}
\subset \prod_{i=1}^m X_i,
\end{equation}
where $(\underline{X},\underline{A})^{\sigma} = \prod_{i=1}^{m} Y_i$ with
$Y_i = X_i$ if $i\in\sigma$ and $Y_i = A_i$ if $i\notin\sigma$. If all pairs 
$(X_i,A_i)$ are equal to a fixed pair $(X,A)$, we simply write the 
result as  $\ZK(X,A)$.
We refer to \cite{BP15,DS07,BBCG} for comprehensive treatments.

For joins of simplicial complexes, the construction satisfies
\begin{equation}
\label{eq:joinprod}
\ZZ_{K*K'}(X,A) \cong \ZK(X,A) \times \ZZ_{K'}(X,A);
\end{equation}
see \cite[Lem.~2.1.4]{DS07}.

The central instance for this paper is the pair $(D^2, S^1)$,
where $D^2\subset\C$ is the closed unit disk and $S^1=\partial D^2$.
The resulting space
\begin{equation}
\label{eq:zk}
\ZK = \ZK(D^2,S^1) \subset (D^2)^m \subset \C^m
\end{equation}
is the \emph{moment-angle complex} of $K$.  It is a compact space
of dimension $m + \dim K + 1$, and is a smooth manifold whenever
$K$ is a triangulation of a sphere \cite{BP02}.

\medskip\noindent\textbf{Rational homotopy type.}
The rational homotopy type of the more general polyhedral product
$\ZK(\underline{X},\underline{X'})$, for nilpotent CW-pairs, was determined 
by F\'elix--Tanr\'e \cite{FT09}.  Let $A_i$, $A_i'$ be connected
finite-type rational CDGA models for $X_i$, $X_i'$ with surjective
maps $\varphi_i\colon A_i\surj A_i'$ modeling the
inclusions $X_i'\inj X_i$.  For each simplex $\sigma$
on $[m]$, let $I_\sigma = \prod_{i=1}^m E_i$ with $E_i = \ker(\varphi_i)$
if $i\in\sigma$ and $E_i = A_i$ if $i\notin\sigma$.

\begin{theorem}[\cite{FT09}]
\label{thm:FT}
The polyhedral product $\ZK(\underline{X},\underline{X'})$ 
has a rational CDGA model of the form
\[
  A(K) = \Bigl(\bigotimes_{i=1}^m A_i\Bigr)\Big/I(K),
\]
where $I(K) = \sum_{\sigma\notin K} I_\sigma$.  Moreover,
inclusions of subcomplexes $L\subset K$ are modeled by the
projection $A(K)\surj  A(L)$.
\end{theorem}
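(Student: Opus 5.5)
The plan is to follow F\'elix--Tanr\'e's strategy of building the model in two stages. First, handle a single factor. Then assemble the product and cut it down by the ideal $I(K)$, comparing with the polyhedral product as a colimit. Concretely, I would write $\ZK(\underline{X},\underline{X'})$ as the colimit (union) over the face poset of $K$ of the subspaces $(\underline{X},\underline{X'})^{\sigma}$. Since the $X_i'\subset X_i$ are cofibrations of CW-pairs, this colimit is a homotopy colimit. Applying $A_{\mathrm{PL}}$, which turns such homotopy colimits into homotopy limits of CDGAs, identifies a model of $\ZK$ with the homotopy limit over $\sigma\in K$ of the CDGAs
\[
A^{\sigma}=\bigotimes_{i\in\sigma}A_i\otimes\bigotimes_{i\notin\sigma}A_i' .
\]
By the K\"unneth theorem for $A_{\mathrm{PL}}$ over $\Q$ with finite-type hypotheses, $A^{\sigma}$ models the product $(\underline{X},\underline{X'})^{\sigma}$.

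Next I would check that $A^{\sigma}$ equals $\bigotimes_i A_i$ modulo the ideal generated by $\ker\varphi_j$ for $j\notin\sigma$; this uses surjectivity of $\varphi_j$. Since all the structure maps $A^{\tau}\surj A^{\sigma}$ for $\sigma\subset\tau$ are surjective, the limit diagram is Reedy fibrant. Its strict limit therefore computes the homotopy limit. The key algebraic step is then to identify
\[
\lim_{\sigma\in K}\Bigl(\bigotimes_i A_i\Big/\textstyle\sum_{j\notin\sigma}J_j\Bigr)\;\cong\;\Bigl(\bigotimes_i A_i\Bigr)\Big/\textstyle\sum_{\tau\notin K}I_\tau .
\]
Here $J_j$ is the ideal generated by $\ker\varphi_j$, and $I_\tau=\prod_{i\in\tau}\ker\varphi_i$ tensored with the other $A_i$. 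Over a field, split each $A_i=A_i'\oplus E_i$ as graded vector spaces (not as algebras). Then $\bigotimes A_i$ decomposes as a direct sum over subsets $S\subset[m]$ of $\bigotimes_{i\in S}E_i\otimes\bigotimes_{i\notin S}A_i'$. In these terms, $A^{\sigma}$ is the sum of the summands with $S\subset\sigma$. The limit is the sum of the summands with $S\in K$, and the quotient by $I(K)$ kills exactly the summands with $S\notin K$. I expect this bookkeeping, and making sure the splitting is compatible with the maps, to be the main technical obstacle. Multiplicativity is automatic because both sides are quotients or limits of quotients of the same algebra.

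Finally, naturality: for $L\subset K$, the inclusion $\ZZ_L\subset\ZK$ is the map of colimits over the smaller poset. On limits this is the restriction, which under the identification above is the projection $A(K)\surj A(L)$, because $I(K)\subset I(L)$. Connectivity of the $A_i$ ensures that every CDGA in sight is connected, so the result is a genuine model and not merely a quasi-isomorphism class.
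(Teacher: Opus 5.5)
The paper does not prove this statement. It quotes it from F\'elix--Tanr\'e \cite{FT09} and uses it as a black box, so there is no in-paper argument to compare with. Your route is sound and the bookkeeping is right:
\begin{itemize}
\item You write $\ZK(\underline{X},\underline{X'})$ as a Reedy-cofibrant colimit over the face poset, apply $A_{\mathrm{PL}}$ to get a homotopy limit, and identify the strict limit using the splittings $A_i=s_i(A_i')\oplus E_i$.
\item With $W_S=\bigotimes_{i\in S}E_i\otimes\bigotimes_{i\notin S}s_i(A_i')$, one has $A^\sigma\cong\bigoplus_{S\subseteq\sigma}W_S$ and $I_\tau=\bigoplus_{S\supseteq\tau}W_S$, hence $I(K)=\bigoplus_{S\notin K}W_S$.
\item The canonical CDGA map $(\bigotimes_iA_i)/I(K)\to\lim_{\sigma\in K}A^\sigma$ is bijective on underlying graded vector spaces.
\item The $s_i$ need not commute with differentials, but this is harmless: the comparison map is a CDGA map by construction, so only bijectivity has to be checked linearly.
\end{itemize}
One could equally run the same algebra as an induction that adds one simplex at a time, each step a pullback along a surjection. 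Your global version buys uniformity at the cost of the Reedy machinery.

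Two steps need repair.

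\textbf{Reedy fibrancy.} Your justification is invalid as stated. Over the face poset, Reedy fibrancy means the \emph{matching maps} $A^\tau\to\lim_{\sigma\subsetneq\tau}A^\sigma$ are surjective. Surjectivity of each individual structure map $A^\tau\surj A^\sigma$ does not imply this. Already for $\tau$ an edge, the matching object is a fibre product $A^{\{1\}}\times_{A^{\emptyset}}A^{\{2\}}$. A cone of surjections onto it need not be surjective; the diagonal $\Q[x]/(x^2)\to\Q[x]/(x^2)\times_{\Q}\Q[x]/(x^2)$ is an example. What rescues you is your own splitting. Under it, the matching object is identified with $\bigoplus_{S\subsetneq\tau}W_S$, and the matching map is the projection from $\bigoplus_{S\subseteq\tau}W_S$, hence surjective. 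Derive fibrancy from that.

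\textbf{Replacing the diagram.} You must replace $\sigma\mapsto A_{\mathrm{PL}}\bigl((\underline{X},\underline{X'})^\sigma\bigr)$ by $\sigma\mapsto A^\sigma$ inside the homotopy limit. This needs a zigzag of objectwise quasi-isomorphisms of diagrams, natural in $\sigma$. That in turn requires each $\varphi_i$ to be linked to $A_{\mathrm{PL}}(X_i)\to A_{\mathrm{PL}}(X_i')$ by \emph{strictly} commuting squares with quasi-isomorphic vertical maps, not merely by a square commuting up to homotopy. This rectification is standard, for example via a relative Sullivan model of $\varphi_i$ together with homotopy extension, but you should state it. After that, tensoring the zigzags and composing with the natural K\"unneth quasi-isomorphism gives the required map of diagrams. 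It is natural in $L\subset K$ as well, which is what the claim about the projection $A(K)\surj A(L)$ needs.
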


The additive structure of $H^*(\ZK(\underline{X},\underline{X'});\Q)$
is given by the Bahri--Bendersky--Cohen--Gitler decomposition
\cite{BBCG}; in the moment-angle case $(D^2,S^1)$ this is the
Hochster formula recalled in \S\ref{subsec:Hochster-conn}.
In particular, Theorem~\ref{thm:FT} provides a CDGA model $A(K)$ for
$\ZK(\underline{X},\underline{X'})$; the question of whether this 
model is formal (i.e., quasi-isomorphic to its cohomology) is closely 
related to the non-formality problems studied in this paper, and to 
the presence of non-trivial Massey products in $H^*(\ZK;\Q)$.

\subsection{Coordinate subspace arrangements}
\label{subsec:coordinate-subspaces}
There is an equivalent description of the moment-angle complex $\ZK$ 
as the complement of a coordinate subspace arrangement.  Define
\begin{equation}
\label{eq:ak-arr}
\A_K = \{ L_\sigma : \sigma\text{ a minimal non-face of }K\},
\quad L_\sigma = \{z\in\C^m : z_i=0 \text{ for all } i\in\sigma\}.
\end{equation}
Let $V(\A_K)=\bigcup_{L\in \A_K} L$ be the union of these 
coordinate subspaces, and $M(\A_K)=\C^m\setminus V(\A_K)$ 
the complement of the subspace arrangement.
A point $z\in\C^m$ lies in $V(\A_K)$ if and only if $\{i:z_i=0\}\notin K$;
equivalently, $z\in M(\A_K)$ if and only if $\{i:z_i=0\}\in K$.  Hence
$M(\A_K) = \ZZ_K(\C,\C^*)$, and \cite[Thm.~4.7.5]{BP15} yields a 
$\mathbb{T}^m$-equivariant deformation retraction
\begin{equation}
\label{eq:mak-zk}
M(\A_K) \simeq \ZK. 
\end{equation}

Recall that the \emph{Stanley--Reisner ideal} of $K$ is the squarefree
monomial ideal
\begin{equation}
\label{eq:sr-ideal}
I_K = \bigl(x_\sigma : \sigma\notin K\bigr) \subseteq \C[x_1,\dots,x_m],
\qquad x_\sigma \coloneqq \prod_{i\in\sigma} x_i,
\end{equation}
minimally generated by the monomials $x_\sigma$ of the minimal
non-faces of $K$.  It governs the cohomology ring of $\ZK$
(\S\ref{subsec:massey-zk}).  The defining ideal of the arrangement
$\A_K$, by contrast, is the Stanley--Reisner ideal of the
\emph{Alexander dual} $K^\vee = \{\tau\subseteq[m] : [m]\setminus\tau\notin K\}$,
as one sees by intersecting the ideals $(x_i:i\in\sigma)$ of the
components $L_\sigma$ of $V(\A_K)$ and applying Alexander duality for
monomial ideals (see, e.g., \cite[\S 2.4]{BP15}):
\begin{equation}
\label{eq:IVAK}
I(V(\A_K)) = \bigcap_{\sigma\,\text{min.~non-face}} (x_i:i\in\sigma) = I_{K^\vee}.
\end{equation}
The minimal generators of $I_{K^\vee}$ are the squarefree monomials
\begin{equation}
\label{eq:gF}
g_F \coloneqq \prod_{i\in[m]\setminus F} x_i, \qquad F\text{ a facet of }K,
\end{equation}
or, equivalently, the monomials $x^T$ where $T$ runs over the minimal 
transversals of the hypergraph of minimal non-faces of $K$. 
Since $I_{K^\vee}$ is a squarefree monomial ideal, hence radical, the
ideal equality~\eqref{eq:IVAK} is equivalent to the equality of zero sets
\begin{equation}
\label{eq:VAK}
V(\A_K) = V(I_{K^\vee}).
\end{equation}

\subsection{The Milnor fibration associated to $K$}
\label{subsec:milnor-K}

We now specialize the Fern\'andez de Bobadilla construction of
\S\ref{subsec:FdB} to the ideal $I = I_{K^\vee}$, whose minimal
generators are the monomials $g_F$ of~\eqref{eq:gF}, of degrees
$d_F = m - |F|$.  Enumerate the facets of $K$ as $F_1,\dots,F_r$,
write $g_j = g_{F_j}$ and $d_j = m - |F_j|$, and set
$d = 1 + \max_j d_j$.  Assigning $w(x_i)=1$ for $i\in[m]$ and
$w(y_j) = d - d_j$ gives positive integer weights for which every
term $y_j g_j$ has weighted degree $d$, so
\begin{equation}
\label{eq:Phi_K}
\Phi_K(x,y) \,:=\, \sum_{j=1}^{r} y_j\, g_j(x)
\end{equation}
is weighted-homogeneous of degree $d$; it is the polynomial $\Phi_I$
of~\S\ref{subsec:FdB} for $I = I_{K^\vee}$.  Moreover, $\Phi_K$ is
homogeneous (with $w(x_i)=w(y_j)=1$ for all $i,j$) if and only if
all $d_j$ are equal, i.e., if and only if $K$ is \emph{pure} (all
facets have the same cardinality).

Since $\C^m\setminus V(I_{K^\vee}) = \C^m\setminus V(\A_K) =
\ZZ_K(\C,\C^*) \simeq \ZK$ by~\eqref{eq:VAK} and the deformation
retraction above, Theorem~\ref{thm:FdB} (in its global,
weighted-homogeneous form) yields:

\begin{corollary}
\label{cor:FdB-ZK}
Let $K$ be a simplicial complex on $[m]$ with facets
$F_1,\dots,F_r$, and let $\Phi_K(x,y) = \sum_{j=1}^r y_j\, g_{F_j}(x)$
be the weighted-homogeneous polynomial defined in~\eqref{eq:Phi_K}.
Then the Milnor fiber $F_K = \Phi_K^{-1}(1)$ is homotopy equivalent
to $\ZK$, and the geometric monodromy of $\Phi_K$ is trivial.
\end{corollary}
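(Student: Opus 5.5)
The plan is to obtain the corollary by chaining three facts already on record: the Fern\'andez de Bobadilla realization (Theorem~\ref{thm:FdB}), the identification of the zero set of the ideal $I_{K^\vee}$ with the coordinate subspace arrangement $V(\A_K)$ in~\eqref{eq:VAK}, and the deformation retraction~\eqref{eq:mak-zk} of $M(\A_K)$ onto $\ZK$. First I check that $\Phi_K$ is exactly $\Phi_I$ for $I=I_{K^\vee}=(g_{F_1},\dots,g_{F_r})$. By~\eqref{eq:gF} the monomials $g_{F_j}$ are the minimal generators of $I_{K^\vee}$, and they are polynomials, hence germs in $\mathcal{O}_{\C^m,O}$. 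Theorem~\ref{thm:FdB} then gives two things at once: the local Milnor fiber at the origin is homotopy equivalent to $B_\varepsilon\setminus V(I)$, and the geometric monodromy is trivial, realized by the isotopy $\varphi_\theta(x,y)=(x,e^{2\pi\ii\theta}y)$.

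Second, I pass from local to global objects. With $w(x_i)=1$ and $w(y_j)=d-d_j>0$, every term $y_jg_j$ has weighted degree $d$, so $\Phi_K$ is weighted-homogeneous with positive weights. By the discussion after Theorem~\ref{thm:FdB}, the weighted $\C^*$-action then identifies the local fiber with the global fiber $F_K=\Phi_K^{-1}(1)$. The germ complement $B_\varepsilon\setminus V(I)$ is identified with $\C^m\setminus V(I)$ in the same way: $V(I)$ is a union of coordinate subspaces and is therefore invariant under radial scaling, and radial retraction gives a deformation retraction of $\C^m\setminus V(I)$ onto $B_\varepsilon\setminus V(I)$. The monodromy isotopy $\varphi_\theta$ commutes with the weighted action, because it only rotates the $y$-coordinates, so the trivial monodromy transfers to the global fiber.

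Third, I identify the complement. By~\eqref{eq:VAK}, $\C^m\setminus V(I_{K^\vee})=M(\A_K)=\ZZ_K(\C,\C^*)$, and~\eqref{eq:mak-zk} gives $M(\A_K)\simeq\ZK$. Composing these homotopy equivalences yields $F_K\simeq\ZK$.

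The only point needing care, and the step I expect to be the main obstacle, is the local-to-global passage. Here one must make sure that the radius $\varepsilon$ demanded by Theorem~\ref{thm:FdB} is compatible with the scaling argument, i.e.\ that homogeneity allows any $\varepsilon$. One must also use the real radial retraction for the complement, not the weighted action, since $x$ has uniform weight. Both follow directly from the conicity of $V(I)$ and of $V(\Phi_K)$, so I do not expect a genuine difficulty.
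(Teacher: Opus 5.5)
Your proposal is correct and follows essentially the paper's route: identify $\Phi_K$ with $\Phi_I$ for $I=I_{K^\vee}$, invoke Theorem~\ref{thm:FdB} in its global weighted-homogeneous form, and chain $\C^m\setminus V(I_{K^\vee})=M(\A_K)=\ZZ_K(\C,\C^*)\simeq\ZK$ via \eqref{eq:VAK} and \eqref{eq:mak-zk}. The only simplification the paper makes is for the monodromy: linearity in $y$ gives $\Phi_K(x,cy)=c\,\Phi_K(x,y)$, hence $M\cong F_K\times\C^*$ globally, so no transfer from the local isotopy is needed.
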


Three spaces should be kept apart here, in two different ambient
dimensions.  The Milnor fibration of $\Phi_K$ is
$\Phi_K\colon M\to\C^*$, with total space the complement
$M = \C^N\setminus V(\Phi_K)$ of the \emph{hypersurface}
$V(\Phi_K)\subset\C^N$, where $N = m+r$.  This hypersurface must not
be confused with the coordinate subspace arrangement
$V(\A_K)\subset\C^m$ of~\eqref{eq:VAK}, and $M$ must not be confused
with the arrangement complement $M(\A_K) = \C^m\setminus V(\A_K)$: it
is the Milnor \emph{fiber} $F_K\subset\C^N$, not $M$, that satisfies
$F_K\simeq M(\A_K)\simeq\ZK$ (Corollary~\ref{cor:FdB-ZK}).  As for
$M$ itself, the linearity of $\Phi_K$ in $y_1,\dots,y_r$ gives
$\Phi_K(x,cy) = c\,\Phi_K(x,y)$, so the map
$\bigl((x,y),c\bigr)\mapsto(x,cy)$ is a diffeomorphism
$F_K\times\C^*\isom M$; this both exhibits the triviality of the
geometric monodromy and identifies
$M\cong F_K\times\C^*\simeq\ZK\times\C^*$.

\begin{example}
\label{ex:two-vertices}
The smallest instance of the construction, illustrating
\eqref{eq:VAK} and Corollary~\ref{cor:FdB-ZK}, is the
two-point complex $K$ on $[2]$, whose only non-face is 
$\{1,2\}$, so that $V(\A_K) = L_{\{1,2\}} = \{x_1 = x_2 = 0\}$
is the origin.  Its facets are $\{1\}$ and $\{2\}$, with $g_{\{1\}} = x_2$
and $g_{\{2\}} = x_1$, so $I_{K^\vee} = (x_1, x_2)$ and indeed
$V(I_{K^\vee}) = \{0\} = V(\A_K)$.  The associated polynomial 
$\Phi_K = y_{1} x_2 + y_{2} x_1$ is a nondegenerate quadratic form, 
and its Milnor fiber $F_K = \Phi_K^{-1}(1)$ is homotopy equivalent to 
$S^3 = \ZK(D^2,S^1)$, the moment-angle complex of two points.
Here $N = 4$, and the complement $M = \C^4\setminus V(\Phi_K)$ of this
quadric is diffeomorphic to $F_K\times\C^*\simeq S^3\times S^1$.
\end{example}

\begin{remark}
\label{rem:FdB-DS07}
The original application of Corollary~\ref{cor:FdB-ZK} is due to
Fern\'andez de Bobadilla~\cite{FdB}, building on Denham--Suciu~\cite{DS07}.
Taking $K$ to be one of the five obstruction graphs of \cite{DS07},
one obtains a weighted-homogeneous polynomial whose Milnor fiber
$F_K \simeq \ZK$ is simply connected and non-formal: the
non-formality is witnessed by a non-trivial triple Massey product
in $H^8(F_K;\Z)$, on classes in $H^3(F_K;\Z)\cong\Z^5$, 
with trivial indeterminacy \cite[\S 6.2]{DS07}.

This is consistent with the Dimca--Papadima criterion, even in the
generality of Proposition~\ref{prop:DP}, and the reason is
instructive.  That criterion transports $q$-formality from a quotient
$X = Y/G$ to $Y$ whenever $G$ acts trivially on $H^{\le q}(Y;\Q)$, so
its entire force lies in the $q$-formality of the base $X$.  For a
hyperplane arrangement that base is the projectivized complement
$U(\A) = F(\A)/\Z_n$, formal by Brieskorn's theorem, and trivial
monodromy then forces the formality of $F(\A)$
(Corollary~\ref{cor:DP}).  The polynomial $\Phi_K$ is built instead
from a coordinate subspace arrangement; its geometric monodromy is
again trivial (Theorem~\ref{thm:FdB}), so $\Z_n$ acts trivially on
$H^*(F_K;\Q)$, but the relevant base---the monodromy quotient
$F_K/\Z_n$---is not $q$-formal, as it must fail to be once
$F_K\simeq\ZK$ is non-formal.  The formality of the base, automatic
for arrangements, is exactly what is missing here.
\end{remark}

\subsection{Hochster's formula and connectivity}
\label{subsec:Hochster-conn}

Let $K_J$ denote the full subcomplex of $K$ induced on
$J\subseteq[m]$. A theorem of Hochster~\cite{Ho77}, in the form
of Buchstaber--Panov \cite{BP02} and Baskakov \cite{Ba02}, gives
a ring isomorphism
\begin{equation}
\label{eq:hochster}
H^*(\ZK;\Z) \cong \bigoplus_{J\subseteq[m]} \widetilde{H}^*(K_J;\Z),
\end{equation}
where a class in $\widetilde{H}^{p}(K_J;\Z)$ sits in
$H^{p+|J|+1}(\ZK;\Z)$.  The ring structure is given by
Baskakov's formula \cite{Ba02} (see also \cite{BBP04,Pa05}): 
the product of $\alpha\in\widetilde{H}^*(K_I;\Z)$ and
$\beta\in\widetilde{H}^*(K_J;\Z)$ is zero if $I\cap J\ne\emptyset$,
and is induced by $K_I * K_J \leftarrow K_{I\sqcup J}$ if
$I\cap J=\emptyset$.

The moment-angle complex $\ZK$ is $2$-connected for any simplicial
complex $K$ on $m \ge 2$ vertices \cite[Thm.~6.33(a)]{BP02}. 
Higher connectivity of $\ZK$ is controlled by the invariant 
\begin{equation}
\label{eq:nuk}
\nu(K) = \min\{\,|\sigma| : \sigma \text{ is a minimal non-face of } K\,\},
\end{equation}
the smallest cardinality of a minimal non-face of $K$. The connectivity 
bound below follows from the Ziegler--\v{Z}ivaljevi\'c theorem~\cite{ZZ93}, 
which gives the homotopy type of a linear subspace arrangement link as a 
wedge of joins (Theorem 2.4 there), together with the identification 
$M(\A_K)\simeq\ZK$ of~\cite{BP15}.

\begin{theorem}[\cite{ZZ93,BP15}]
\label{thm:ZZ}
Let $K$ be a simplicial complex on $[m]$ in which every minimal
non-face has cardinality at least $\nu+1$ (equivalently,
$\nu(K)\ge\nu+1$).  Then:
\begin{enumerate}[itemsep=1pt]
\item \label{zz1}
The coordinate subspace arrangement complement
$M(\A_K)$ is $2\nu$-connected.
\item \label{zz2}
The moment-angle complex $\ZK\simeq M(\A_K)$
is $2\nu$-connected.
\end{enumerate}
\end{theorem}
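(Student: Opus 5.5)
The statement to prove is Theorem~\ref{thm:ZZ}. Part~\eqref{zz2} follows from part~\eqref{zz1} together with the homotopy equivalence $M(\A_K)\simeq\ZK$ of~\eqref{eq:mak-zk}. So the work is in~\eqref{zz1}, and I would give two independent arguments: a homological one through Hochster's formula~\eqref{eq:hochster}, and a geometric one through the complement of the arrangement.

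\emph{Homological route.} First I check simple connectivity. Since $\ZK$ is $2$-connected for $m\ge 2$ \cite[Thm.~6.33(a)]{BP02}, it is simply connected; if $m\le 1$, then $\ZK$ is contractible. By the Hurewicz theorem it therefore suffices to show $\widetilde H_j(\ZK;\Z)=0$ for $j\le 2\nu$. By the universal coefficient theorem, it is enough to show $\widetilde H^j(\ZK;\Z)=0$ for $j\le 2\nu+1$, where degree $2\nu+1$ is included to control torsion.

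Now use~\eqref{eq:hochster}. A summand $\widetilde H^p(K_J)$ contributes to $H^{p+|J|+1}(\ZK)$, so I need $\widetilde H^p(K_J)=0$ whenever $p+|J|+1\le 2\nu+1$.
\begin{itemize}[itemsep=1pt]
\item If $|J|\le\nu$, then every subset of $J$ has fewer than $\nu+1$ elements. None of them can be a minimal non-face, so $J$ is a face, $K_J$ is a simplex, and it is acyclic.
\item If $|J|\ge\nu+1$, then every subset of $J$ of size $\le\nu$ is a face. Hence $K_J$ contains the full $(\nu-1)$-skeleton of the simplex on $J$, and so $\widetilde H^p(K_J)=0$ for $p\le\nu-2$. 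For $p\ge\nu-1$ the degree satisfies $p+|J|+1\ge(\nu-1)+(\nu+1)+1=2\nu+1$. Equality can occur only when $p=\nu-1$ and $|J|=\nu+1$. In that case $K_J$ is either the whole simplex, which is acyclic, or its boundary $\partial\Delta^{\nu}$, whose only reduced cohomology sits in degree $\nu-1$ and is free. That class lives in $H^{2\nu+1}(\ZK)$ and is free, so it produces no torsion in $H_{2\nu}$.
\end{itemize}
Hence $H_{j}(\ZK)=0$ for $1\le j\le 2\nu$, which gives $2\nu$-connectivity.

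\emph{Geometric route, matching the citation \cite{ZZ93}.} Every subspace $L_\sigma$ in $\A_K$ has complex codimension $|\sigma|\ge\nu+1$, so every intersection of subspaces in the arrangement has real codimension at least $2\nu+2$. By the Goresky--MacPherson and Ziegler--\v{Z}ivaljevi\'c wedge decomposition, the link of the arrangement is a wedge of joins $\Delta(P_{<x})*S^{\dim x-1}$. Alexander duality then shows that the complement has reduced homology only in degrees $\ge 2\nu+1$. Combined with simple connectivity (a general-position argument, since codimension $\ge 4$) and the Hurewicz theorem, this again gives $2\nu$-connectivity.

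The main obstacle is the torsion bookkeeping at the boundary degree $2\nu+1$, together with the small edge cases $m\le 1$ and $K$ a full simplex. The Hochster case analysis above handles both.
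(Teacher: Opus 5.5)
Your proposal is correct and is essentially the paper's own argument. The paper obtains Theorem~\ref{thm:ZZ} from the Ziegler--\v{Z}ivaljevi\'c wedge-of-joins description of the link combined with $M(\A_K)\simeq\ZK$, which is your geometric route. Right after the statement, it also notes via Hochster's formula that $\nu(K)\ge\nu+1$ gives every $K_J$ a complete $(\nu-1)$-skeleton, so no class of $\widetilde H^*(\ZK)$ occurs below degree $2\nu+1$, which is your homological route.

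Your version usefully adds the Hurewicz and universal-coefficient bookkeeping that the paper leaves implicit, with two small corrections:
\begin{enumerate}
\item What is needed, and what you actually verify, is $\widetilde H^j(\ZK;\Z)=0$ for $j\le 2\nu$ together with freeness of $H^{2\nu+1}(\ZK;\Z)$. Your opening claim that it is enough to have vanishing through degree $2\nu+1$ is sufficient but never attainable, since that group is nonzero whenever a minimal non-face has size exactly $\nu+1$.
\item Getting simple connectivity from the $2$-connectivity of $\ZK$ presumes there are no ghost vertices. This holds precisely when $\nu\ge 1$; for $\nu=0$ only path-connectivity is claimed, so nothing is lost.
\end{enumerate}
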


Equivalently: if $\nu(K)\ge\nu+1$, then $\ZK$ is $2\nu$-connected;
in particular, $\nu(K)\ge 2$ recovers the $2$-connectivity above, and
$\nu(K)\ge 3$ gives $4$-connectivity.

The factor of $2$ reflects the real codimension $2\nu(K)$ of the
smallest subspaces $L_\sigma$ (those with $|\sigma|=\nu(K)$): a complex
subspace arrangement whose subspaces all have complex codimension
$\ge c$ has a $(2c-2)$-connected complement.  This is sharp here: a
minimal non-face $\sigma$ of size $\nu(K)$ gives
$K_\sigma = \partial\Delta^{\nu(K)-1}\simeq S^{\nu(K)-2}$, contributing
through~\eqref{eq:hochster} a class in $H^{2\nu(K)-1}(\ZK;\Z)$, so
$\ZK$ is not $(2\nu(K)-1)$-connected.  More generally, $\nu(K)\ge c$
forces every full subcomplex $K_J$ to have a complete
$(c-2)$-skeleton, hence to be $(c-3)$-connected, so no class of
$\widetilde{H}^*(\ZK)$ occurs below degree $2\nu(K)-1$.

\subsection{Massey products in moment-angle complexes}
\label{subsec:massey-zk}
Throughout, $\k$ denotes a field. Baskakov \cite{Ba03} (see also Panov \cite{Pa05}) 
and Grbi\'c--Linton \cite{GL} construct non-trivial higher Massey products in the 
cohomology ring $H^*(\ZK;\k)$; the constructions are combinatorial and valid 
over any field or $\Z$ \cite[\S2]{GL}, taking place in the cellular cochain algebra
$C^*_{\mathrm{CW}}(\ZK;\k)$, which is a DGA, not a CDGA. Formality, by 
contrast, is a rational invariant (\S\ref{subsec:formality}): a property of the 
CDGA $A_{\mathrm{PL}}(\ZK;\Q)$.

This cellular DGA is not known to be weakly equivalent to
$A_{\mathrm{PL}}(\ZK;\Q)$, so a non-trivial Massey product in it does not by
itself force non-formality of the space.  That such products nonetheless
obstruct the rational formality of $\ZK$ was established by Denham--Suciu
\cite{DS07}, via Watkiss's theorem \cite[Cor.~10.10]{FHT}.  Since the
Grbi\'c--Linton products exist over $\Q$, the complexes constructed here are not
formal.

For their formality \emph{degrees} (Theorems~\ref{thm:main} and~\ref{thm:family})
we use the F\'elix--Tanr\'e CDGA model $A(K)\simeq A_{\mathrm{PL}}(\ZK;\Q)$ of
Theorem~\ref{thm:FT} \cite{FT09}.

A systematic study of \emph{lowest-degree} triple Massey products
in $H^*(\ZK;\Z)$ was carried out by Denham--Suciu~\cite{DS07}.
In degree $3$, i.e., on classes 
\[
\alpha_1,\alpha_2,\alpha_3\in
H^3(\ZK;\Z) = \bigoplus_{|J|=2}\widetilde{H}^0(K_J;\Z),
\]
the Massey product $\langle\alpha_1,\alpha_2,\alpha_3\rangle$ is
non-trivial (with trivial indeterminacy) if and only if the $1$-skeleton 
of $K$ contains an induced subgraph isomorphic to one of five specific
\emph{obstruction graphs} \cite[Thm.~6.1.1]{DS07}.  The smallest
such $K$ lives on $6$ vertices, and its moment-angle complex is
simply connected. 
The classification was subsequently completed by Grbi\'c--Linton
\cite{GL20}: there is a \emph{sixth} obstruction graph (on six vertices),
which yields a non-trivial triple Massey product with \emph{non-trivial}
indeterminacy; the DS classification implicitly assumed trivial
indeterminacy (all five products there are decomposable,
hence have trivial indeterminacy).
The $\Phi_K$-polynomial associated to the sixth graph gives a
non-formal Milnor fiber of the same type as the DS examples
($2$-connected, triple Massey product in $H^8$), but with
non-trivial indeterminacy.

Crucially for this paper, the Denham--Suciu analysis is confined
to $K^i = S^0$ (two disjoint vertices), giving $p_i = 0$, and to
$n=3$. The five obstruction graphs have edges as their minimal non-faces,
so $\nu(K) = 2$, and Theorem~\ref{thm:ZZ} gives that $\ZK$ is
$2$-connected. Producing Milnor fibers with
higher connectivity requires non-trivial Massey products in
moment-angle complexes with larger minimal non-faces, i.e., with
$\nu(K) \ge k+1$ for $k\ge 2$---and for $n$-fold products with
$n\ge 3$ arbitrary. This is precisely what the Grbi\'c--Linton
construction achieves.

\subsection{The Grbi\'c--Linton constructions}
\label{subsec:GL}

We recall the two constructions of Grbi\'c--Linton \cite{GL}: joins with star
deletions (Construction~\ref{cons:GL1}), which feeds the proof of
Theorem~\ref{thm:main}, and edge stretching (Construction~\ref{cons:GL2}); the
corresponding non-triviality statements are Theorems~\ref{thm:GL-joins}
and~\ref{thm:GL-edge}. We begin with a basic operation on simplicial complexes.

\medskip\noindent\textbf{Star deletion.}
For a simplicial complex $K$ and a simplex $I\in K$, the
\emph{star deletion} of $K$ at $I$ is
\begin{equation}
\label{eq:stars-del}
\sd_I K = \{J\in K : I\not\subseteq J\}.
\end{equation}
Star deletions at non-nested simplices commute \cite[Lem.~3.2]{GL}.

\begin{cons}[joins and star deletions; {\cite[Constr.~3.5]{GL}}]
\label{cons:GL1}
Let $K^1,\dots,K^n$ be simplicial complexes on pairwise disjoint vertex sets,
each not a simplex.  From this data Grbi\'c--Linton build a simplicial complex
$K$, obtained from the join $K^1*\cdots*K^n$ by a prescribed sequence of star
deletions, whose moment-angle complex $\ZK$ carries a non-trivial $n$-fold
Massey product.  The construction is as follows.

For each $i$, fix a non-trivial class $\alpha_i\in\widetilde{H}^{p_i}(K^i_{J_i};\k)$
for some $J_i\subseteq[m_i]$, and let $a_i = \sum_{\sigma\in S_{a_i}}
c_\sigma\chi_\sigma$ be a cocycle representative, supported on the set $S_{a_i}$
of $p_i$-simplices.  For each $\sigma\in S_{a_i}$ choose a distinguished vertex
$v_\sigma\in\sigma$ and set
\[
  P_\sigma = \bigl\{\sigma'\in K^i : \sigma'\ \text{a}\ p_i\text{-simplex},\
  \sigma\cap\sigma' = \sigma\setminus v_\sigma \bigr\},
\]
the $p_i$-simplices meeting $\sigma$ in exactly the facet $\sigma\setminus v_\sigma$
(so $v_\sigma\notin\sigma'$ and $\sigma\notin P_\sigma$).  The set $P_{a_i}$ is
then assembled by the ordered procedure of \cite[Constr.~3.5]{GL}: fix an order
on $S_{a_i}$ and choose the subsequence $\sigma^{(1)},\dots,\sigma^{(\ell)}$ with
$\sigma^{(1)}$ the first element of $S_{a_i}$ and $\sigma^{(t)}$ the first element
of $S_{a_i}\setminus\bigl(P_{\sigma^{(1)}}\cup\cdots\cup P_{\sigma^{(t-1)}}\bigr)$
after $\sigma^{(t-1)}$; then
$P_{a_i} = P_{\sigma^{(1)}}\cup\cdots\cup P_{\sigma^{(\ell)}}$.  When $|S_{a_i}|=1$
(as for $K^i=\partial\Delta^q$ below) this reduces to $P_{a_i}=P_\sigma$.

The complex $K$ is then
\[
  K = \biggl(\prod_{\substack{1\le i<j\le n\\ (i,j)\ne(1,n)}}
  \sd_{\sigma\cup\sigma'}\biggr)\,  \bigl(K^1*\cdots*K^n\bigr),
\]
the star deletions running over $\sigma\in S_{a_i}$ and $\sigma'\in P_{a_j}$. 
Figure~\ref{fig:star-deletion} illustrates the star-deletion step for $K^1=K^2=S^0$.
\end{cons}

\begin{figure}[ht]
\centering
\begin{tikzpicture}[scale=1.25, inner sep=2pt,
  vx/.style={circle, fill=black, minimum size=5pt},
  lbl/.style={font=\small}]

\begin{scope}[xshift=0cm]
  \node[vx] (a) at (0,1.5) {};
  \node[lbl] at (-0.25,1.5) {$a$};
  \node[vx] (b) at (0,0) {};
  \node[lbl] at (-0.25,0) {$b$};
  \node[lbl] at (0,-0.7) {$K^1 = S^0$};

  \node[vx] (c) at (1.2,1.5) {};
  \node[lbl] at (1.45,1.5) {$c$};
  \node[vx] (d) at (1.2,0) {};
  \node[lbl] at (1.45,0) {$d$};
  \node[lbl] at (1.2,-0.7) {$K^2 = S^0$};

  \node[lbl, blue] at (-0.35,0.75) {$\alpha_1$};
  \node[lbl, red]  at (1.55,0.75)  {$\alpha_2$};
\end{scope}

\draw[->, thick] (2.05,0.75) -- (2.85,0.75)
  node[midway, above, font=\small]{join};

\begin{scope}[xshift=3.4cm]
  \coordinate (a) at (0,1.5);
  \coordinate (b) at (0,0);
  \coordinate (c) at (1.4,1.5);
  \coordinate (d) at (1.4,0);

  \draw[thick] (a) -- (d);
  \draw[thick] (b) -- (c);
  \draw[thick] (b) -- (d);
  \draw[thick, red] (a) -- (c);

  \node[vx] at (a) {}; \node[lbl] at (-0.25,1.5) {$a$};
  \node[vx] at (b) {}; \node[lbl] at (-0.25,0)   {$b$};
  \node[vx] at (c) {}; \node[lbl] at (1.65,1.5)  {$c$};
  \node[vx] at (d) {}; \node[lbl] at (1.65,0)    {$d$};

  \node[lbl, red, font=\scriptsize] at (0.7,1.72) {$\{a,c\}$};
  \node[lbl] at (0.7,-0.7) {$K^1 * K^2$};
\end{scope}

\draw[->, thick] (5.5,0.75) -- (6.3,0.75)
  node[midway, above, font=\small]{$\sd_{\{a,c\}}$};

\begin{scope}[xshift=6.9cm]
  \coordinate (a) at (0,1.5);
  \coordinate (b) at (0,0);
  \coordinate (c) at (1.4,1.5);
  \coordinate (d) at (1.4,0);

  \draw[thick] (a) -- (d);
  \draw[thick] (b) -- (c);
  \draw[thick] (b) -- (d);
  \draw[dashed, red, thick] (a) -- (c);

  \node[vx] at (a) {}; \node[lbl] at (-0.25,1.5) {$a$};
  \node[vx] at (b) {}; \node[lbl] at (-0.25,0)   {$b$};
  \node[vx] at (c) {}; \node[lbl] at (1.65,1.5)  {$c$};
  \node[vx] at (d) {}; \node[lbl] at (1.65,0)    {$d$};

  \node[lbl] at (0.7,-0.7) {$\sd_{\{a,c\}}(K^1*K^2)$};
\end{scope}

\end{tikzpicture}
\caption{The GL star-deletion step for $K^1 = K^2 = S^0$
(pairs of disjoint vertices carrying classes $\alpha_1$, $\alpha_2$),
illustrating one building block of Construction~\ref{cons:GL1}.
\emph{Left}: the two complexes separately.
\emph{Middle}: their join $K^1*K^2 = K_{2,2}$, the complete bipartite
graph on $\{a,b\}\sqcup\{c,d\}$; the edge $\{a,c\}$ (red) is selected
for star deletion.
\emph{Right}: after $\sd_{\{a,c\}}$, the edge $\{a,c\}$ is removed
and becomes a new minimal non-face (dashed red).  In the full $n=3$
construction, a third factor $K^3=S^0$ is joined and a second round
of star deletions trivializes the remaining sub-product
$\langle\alpha_2,\alpha_3\rangle$.}
\label{fig:star-deletion}
\end{figure}

\begin{cons}[edge stretching; {\cite[\S4]{GL}}]
\label{cons:GL2}
An \emph{edge contraction} $\varphi\colon K\to\widehat K$ collapses a single
edge $\{u,v\}\in K$ to a vertex, and is a homotopy equivalence when it satisfies
the \emph{link condition} $\lk_K\{u\}\cap\lk_K\{v\}=\lk_K\{u,v\}$ \cite{DEGN}.
The inverse operation is \emph{edge stretching}: a vertex $w\in\widehat K$ is
split into two new vertices $u,v$ joined by a new edge $\{u,v\}$, with the
adjacency of $w$ distributed between $u$ and~$v$. 
Figure~\ref{fig:edge-stretch} illustrates the operation for the $4$-cycle $C_4$.
\end{cons}

\begin{figure}[ht]
\centering
\begin{tikzpicture}[scale=1.25, inner sep=2pt,
  vx/.style={circle, fill=black, minimum size=5pt},
  lbl/.style={font=\small}]

\begin{scope}[xshift=-0.1cm]
  \coordinate (1) at (0,1.4);
  \coordinate (2) at (1.4,1.4);
  \coordinate (3) at (1.4,0);
  \coordinate (4) at (0,0);

  \draw[thick] (2) -- (3);
  \draw[thick] (3) -- (4);
  \draw[thick, red] (4) -- (1);
  \draw[thick, red] (1) -- (2);

  \node[vx] at (1) {}; \node[lbl, red] at (-0.25,1.4)  {$1$};
  \node[vx] at (2) {}; \node[lbl]      at (1.65,1.4)   {$2$};
  \node[vx] at (3) {}; \node[lbl]      at (1.65,0)     {$3$};
  \node[vx] at (4) {}; \node[lbl]      at (-0.25,0)    {$4$};

  \node[lbl] at (0.7,-0.65) {$\widehat{K} = C_4$};
\end{scope}

\draw[->, thick] (2.1,1.05) -- (3.4,1.05)
  node[midway, above, yshift=2pt, font=\small]{stretch $1$};

\begin{scope}[xshift=4.2cm]
  \coordinate (1) at (0,1.4);
  \coordinate (5) at (1.0,1.8);
  \coordinate (2) at (1.75,0.9);
  \coordinate (3) at (1.4,0);
  \coordinate (4) at (0,0);

  \draw[thick] (2) -- (3);
  \draw[thick] (3) -- (4);
  \draw[thick] (4) -- (1);
  \draw[thick, red] (1) -- (5);
  \draw[thick, red] (5) -- (2);

  \node[vx] at (1) {}; \node[lbl]      at (-0.25,1.4)  {$1$};
  \node[vx] at (5) {}; \node[lbl, red] at (1.0,2.1)    {$5$};
  \node[vx] at (2) {}; \node[lbl]      at (2.0,0.9)    {$2$};
  \node[vx] at (3) {}; \node[lbl]      at (1.65,0)     {$3$};
  \node[vx] at (4) {}; \node[lbl]      at (-0.25,0)    {$4$};

  \node[lbl] at (0.85,-0.65) {$K = C_5$};
\end{scope}

\draw[->, thick] (3.65,0.35) -- (1.85,0.35)
  node[midway, below, yshift=-1pt, font=\small]{contract $\{1,5\}$};

\end{tikzpicture}
\caption{Edge stretching (top arrow) splits vertex $1$ of
$\widehat{K}=C_4$ into a new edge $\{1,5\}$ (red), producing
$K=C_5$; the edges incident to $1$ in $\widehat{K}$ are
redistributed as $\{4,1\}$ and $\{5,2\}$ in $K$.
The inverse (bottom arrow) is the edge contraction
$\varphi\colon K\to\widehat{K}$, collapsing $\{1,5\}\mapsto 1$;
it satisfies the link condition since
$\lk_K\{1\}\cap\lk_K\{5\} = \{4\}\cap\{2\} = \emptyset
= \lk_K\{1,5\}$.}
\label{fig:edge-stretch}
\end{figure}

\begin{theorem}[{\cite[Thm.~3.17]{GL}}]
\label{thm:GL-joins}
Let $K$ be the simplicial complex of Construction~\ref{cons:GL1}.
Then the $n$-fold Massey product
$\langle\alpha_1,\dots,\alpha_n\rangle
\subset H^{p_1+\cdots+p_n+|J_1\cup\cdots\cup J_n|+2}(\ZK;\Q)$
is non-trivial.  In particular, $\ZK$ is non-formal.
\end{theorem}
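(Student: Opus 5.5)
This theorem is cited from Grbi\'c--Linton, so the plan is to reconstruct their argument in the cellular cochain algebra and then transfer it to rational non-formality.

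\emph{Step 1: reduce to a full subcomplex.} Work in the cellular cochain DGA $C^*_{\mathrm{CW}}(\ZK;\Q)$, whose cohomology ring is given by Hochster's formula~\eqref{eq:hochster} with Baskakov's product. Set $J=J_1\sqcup\cdots\sqcup J_n$. Restriction to the full subcomplex $K_J$ is induced by a retraction $\ZK\to\ZZ_{K_J}$, so it splits off the summand indexed by~$J$. It therefore suffices to prove non-triviality inside $\ZZ_{K_J}$. The classes $\alpha_i$ sit in $H^{p_i+|J_i|+1}$, and by Definition~\ref{def:massey} the Massey product lands in degree $\sum(p_i+|J_i|+1)-(n-2)=p_1+\cdots+p_n+|J|+2$, which matches the statement.

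\emph{Step 2: build a defining system.} For $i<j$ with $(i,j)\ne(1,n)$, the star deletions $\sd_{\sigma\cup\sigma'}$ with $\sigma\in S_{a_i}$, $\sigma'\in P_{a_{i+1}}$ make the cochain product $a_i a_{i+1}$ a coboundary. I would write down an explicit primitive $a_{i,i+1}$ using the simplices in $P_{a_{i+1}}$: the missing simplex $\sigma\cup\sigma'$ lets the cocycle $a_i\cdot a_{i+1}$ on the join $K_{J_i}*K_{J_{i+1}}$ be written as a coboundary via the facet $\sigma'\setminus v$. I would then check inductively that the higher entries $a_{i,j}$ can be chosen, also supported on stars of the deleted simplices. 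The ordered choice of $\sigma^{(1)},\dots,\sigma^{(\ell)}$ in Construction~\ref{cons:GL1} ensures these primitives do not interact and create unwanted terms.

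\emph{Step 3: show non-triviality, the main obstacle.} The pair $(1,n)$ is never star-deleted, so $K_J$ still contains every simplex $\sigma_1\cup\cdots\cup\sigma_n$ involving top-index pieces that link positions $1$ and $n$. I would exhibit a homology cycle of $K_J$ in the corresponding Hochster degree, built as a join-type cycle using the missing faces, and pair it with the value $\sum\bar a_{1,l}a_{l+1,n}$ to get a nonzero number. Then I would show that every element of the indeterminacy $\sum_l \alpha_1\cdots\,H^*\cdots$ (products with lower Massey entries) pairs to zero with that cycle. This should follow because Baskakov products are induced from joins $K_I*K_{I'}$ that avoid the deleted simplices. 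Controlling this indeterminacy for general $n$ is where I expect the real work to lie; I would first try to show the indeterminacy vanishes in that $J$-graded summand, using degree counting in $\widetilde H^*(K_I)$ for proper subsets $I\subsetneq J$.

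\emph{Step 4: pass to non-formality.} A nontrivial Massey product in the cellular DGA yields non-formality of $\ZK$ by the Denham--Suciu argument via Watkiss's theorem \cite[Cor.~10.10]{FHT}, as recalled in \S\ref{subsec:massey-zk}. Alternatively, one can run Steps 2–3 directly in the F\'elix--Tanr\'e model $A(K)$ of Theorem~\ref{thm:FT}, where the Stanley--Reisner relations play the role of the star deletions.
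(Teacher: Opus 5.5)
Your outline matches the paper's treatment. The paper gives no independent proof of this cited Grbi\'c--Linton result, only the same sketch you propose: the star deletions trivialize the proper sub-products $\langle\alpha_i,\dots,\alpha_j\rangle$ for $(i,j)\ne(1,n)$, so a defining system exists; non-triviality comes from evaluating the value on an explicit cycle; and non-formality follows from the Denham--Suciu argument via Watkiss's theorem. As you say, the substantive work is your Step~3, and in both your proposal and the paper it is ultimately supplied by Grbi\'c--Linton's explicit computation.
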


This star-deletion step trivializes $\langle\alpha_1,\alpha_2\rangle$
as a sub-product in the $n=3$ construction (cf.\ Figure~\ref{fig:star-deletion}). 
Each star deletion $\sd_{\sigma_i\cup\sigma_j'}$ removes a simplex
from the join and introduces a new minimal non-face; by Hochster's
formula~\eqref{eq:hochster}, this new missing face contributes a
class in $H^*(\ZK;\Z)$ whose cup product with the relevant
$\alpha_i$ vanishes, making the sub-product
$\langle\alpha_i,\dots,\alpha_j\rangle$ trivial.
The star deletions are arranged so that all proper sub-products
$\langle\alpha_i,\dots,\alpha_j\rangle$ for $(i,j)\ne(1,n)$ are
trivialized in this way, ensuring the full $n$-fold product is
defined; non-triviality is then established by an explicit
evaluation on a cycle.

\begin{remark}
\label{rem:GL-connectivity}
When $K^i = \partial\Delta^{k+1}$ (the boundary of the
$(k+1)$-simplex, so $p_i = k$), every star-deleted simplex
$\sigma_i\cup\sigma_j'$ has cardinality $2k+2$, while the surviving
minimal non-faces $[k+2]_i$ have cardinality $k+2$; thus
$\nu(K) = k+2$.  By Theorem~\ref{thm:ZZ}, $\ZK$ is therefore
$(2k+2)$-connected---consistent with the value $2q=2k+2$ obtained for
the $n=3$ family in Theorem~\ref{thm:family}.
\end{remark}

\begin{theorem}[{\cite[Thm.~4.12]{GL}}]
\label{thm:GL-edge}
Let $\widehat{K}$ carry a non-trivial $n$-fold Massey product in
$H^*(\mathcal{Z}_{\widehat{K}};\Z)$.  If $K$ maps onto
$\widehat{K}$ by a sequence of link-condition edge contractions,
then $H^*(\ZK;\Z)$ also contains a non-trivial $n$-fold Massey product.
\end{theorem}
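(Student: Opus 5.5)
The plan is to induct on the number of edge contractions, so it suffices to treat a single contraction $\varphi\colon K\to\widehat K$ collapsing $\{u,v\}$ to a vertex $w$ and satisfying the link condition $\lk_K\{u\}\cap\lk_K\{v\}=\lk_K\{u,v\}$. Throughout I work in the combinatorial DGA model of $\ZK$ underlying Hochster's formula~\eqref{eq:hochster} with Baskakov's product: the cochain complex is $\bigoplus_{J}\widetilde C^*(K_J)$, and cochains on disjoint index sets multiply through $K_I*K_J\leftarrow K_{I\sqcup J}$. For $J\subseteq\widehat{[m]}$ define its \emph{lift} $\widetilde J\subseteq[m]$: if $w\in J$, replace $w$ by $\{u,v\}$; otherwise $\widetilde J=J$.

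The first step is the full-subcomplex check. The link condition restricts to full subcomplexes, because links in $K_{\widetilde J}$ are the restrictions of links in $K$. Hence $\varphi$ restricts to a link-condition contraction $K_{\widetilde J}\to\widehat K_J$, and this restriction is a homotopy equivalence by \cite{DEGN}.

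The second step assembles these into a cochain map
\[
\Psi=\bigoplus_J\varphi^*\colon\ \bigoplus_J\widetilde C^*(\widehat K_J)\longrightarrow\bigoplus_J\widetilde C^*(K_{\widetilde J})\subseteq\bigoplus_{I\subseteq[m]}\widetilde C^*(K_I).
\]
I then check that $\Psi$ is multiplicative. If $I\cap J=\emptyset$ in $\widehat{[m]}$, then $\widetilde I\cap\widetilde J=\emptyset$ and $\widetilde{I\sqcup J}=\widetilde I\sqcup\widetilde J$. The simplicial map $\varphi$ is compatible with the join inclusions, so Baskakov's product is preserved. By the first step, $\Psi$ is a DGA map that is an isomorphism in cohomology onto the summands $\bigoplus_J\widetilde H^*(K_{\widetilde J})$.

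The third step transports the Massey product. Take a defining system for the non-trivial $\langle\alpha_1,\dots,\alpha_n\rangle$ in $\widehat K$. Applying $\Psi$ gives a defining system for $\langle\Psi\alpha_1,\dots,\Psi\alpha_n\rangle$ in $\ZK$, and its value is $\Psi$ of the old value. This value is a non-zero class in the summand $\widetilde H^*(K_{\widetilde J})$ with $J=J_1\cup\dots\cup J_n$.

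The main obstacle is showing that this value does not lie in the indeterminacy of the product in $\ZK$. The indeterminacy in $\ZK$ can involve summands indexed by sets $I\subseteq[m]$ that are not lifts, for instance sets containing $u$ but not $v$. Products of such classes, say $I\ni u$ and $I'\ni v$, can land in a lifted summand, so projection onto lifted summands is not multiplicative. To handle this, I would first pass to the full subcomplex $K_{\widetilde J}$. Its moment-angle complex is a retract of $\ZK$ (Theorem~\ref{thm:FT}: the inclusion $K_{\widetilde J}\subset K$ is modeled by a projection, and there is a retraction in the other direction). So non-triviality in $\mathcal Z_{K_{\widetilde J}}$ implies non-triviality in $\ZK$, and this reduces to the case $\widetilde J=[m]$. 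In that case I would compare indeterminacies degree by degree, using the homotopy equivalences $K_{\widetilde I}\simeq\widehat K_I$. If that comparison fails, the fallback is the Grbi\'c--Linton method: find an explicit cycle in $K_{\widetilde J}$, namely the pullback under $\varphi$ of the evaluating cycle for $\widehat K$. The value pairs non-trivially with this cycle, and I would show that every element of the indeterminacy pairs to zero with it, since that cycle is pulled back from $\widehat K$ where the indeterminacy already vanishes on it.
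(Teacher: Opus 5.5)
\textbf{The gap is in Step~3}, and Step~3 is where all the content of the theorem lies. Step~1 and the retraction onto $\mathcal{Z}_{K_{\widetilde J}}$ are fine.

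Transporting a defining system along $\Psi$ shows only that $\langle\Psi\alpha_1,\dots,\Psi\alpha_n\rangle$ is defined and contains a non-zero element. Non-triviality requires that \emph{no} defining system in $\ZK$ has null-cohomologous value. Defining systems in $\ZK$ may use cochains in summands $\widetilde C^*(K_I)$ with $|I\cap\{u,v\}|=1$, which lie outside the image of $\Psi$ and have no counterpart in $\widehat K$.

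This danger is real. Take the contraction $C_5\to C_4$ of $\{1,5\}$ in Figure~\ref{fig:edge-stretch}, with $u=1$, $v=5$. The Hochster classes of $\{1,3\}$ (containing $u$ only) and $\{2,4,5\}$ (containing $v$ only) multiply to $\pm$ a generator of $\widetilde H^1(C_5)$, i.e.\ to the lift of the top class of $\mathcal{Z}_{C_4}$.

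Such products do enter the set of values. For $n=4$, write $\widetilde J_i$ for the lift of $J_i$. Take a defining system $\{b_{ij}\}$ with $b_{ij}$ in the summand $\widetilde J_i\sqcup\dots\sqcup\widetilde J_j$. Replace $b_{12},b_{34}$ by $b_{12}+z$, $b_{34}+z'$, where $z,z'$ are cocycles of the same degrees in complementary index sets $M\sqcup M'=\widetilde J$ with $M\cap\widetilde J_3\ne\emptyset\ne M'\cap\widetilde J_2$. Every new cross term except $\bar z z'$ vanishes because supports overlap. So this is again a defining system, its value is shifted by $\pm[z][z']$, and nothing prevents $u\in M$, $v\in M'$.

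None of your routes controls these terms:
\begin{itemize}
\item When $w\in J$, the retraction onto $\mathcal{Z}_{K_{\widetilde J}}$ keeps every subset of $\widetilde J$ separating $u$ from $v$. (When $w\notin J$ your retraction does settle everything, since then $K_{\widetilde J}=\widehat K_J$.)
\item A comparison via $K_{\widetilde I}\simeq\widehat K_I$ sees only lifted $I$.
\item The fallback is not well formed. A simplicial map pushes cycles forward, not back, so you would need a chain-level homotopy inverse of $\varphi$. Even then, vanishing of the indeterminacy on the cycle in $\widehat K$ says nothing about products of non-lifted classes, which do not exist there.
\end{itemize}
For $n=3$ and Hochster-homogeneous classes the gap closes by bookkeeping. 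With $\beta_i=\Psi\alpha_i$, the set of values is a coset of $\beta_1H+H\beta_3$. The $\widetilde J$-component of $\beta_1x+y\beta_3$ involves only the components of $x,y$ in the lifted summands $\widetilde J\setminus\widetilde J_1$ and $\widetilde J\setminus\widetilde J_3$, where $\Psi$ is an isomorphism compatible with products up to sign. For $n\ge4$ a genuinely new argument is needed.

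\textbf{A secondary error is in Step~2: $\Psi$ is not a DGA map.} When $w\in J$ one has $|\widetilde J|=|J|+1$, so $\Psi$ raises total degree by one on those summands. Graded commutativity then forbids strict multiplicativity. In $\mathcal{Z}_{C_4}=S^3\times S^3$ the degree-$3$ classes of $\{1,3\}$ and $\{2,4\}$ anticommute and have non-zero product. Their images in $\mathcal{Z}_{C_5}$, of degrees $4$ and $3$, commute. So $\Psi$ is multiplicative only up to a Koszul-type sign. Defining systems must be transported with sign corrections, and the lifted product lands in degree $L+1$ when some $J_i\ni w$. This part is repairable.

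\textbf{On the paper's route.} The paper's own justification cannot fill your gap. It says $\varphi^*\colon H^*(\mathcal{Z}_{\widehat K};\Z)\to H^*(\ZK;\Z)$ is an isomorphism because the contraction is a homotopy equivalence. But the link condition gives only $|K|\simeq|\widehat K|$ and $K_{\widetilde J}\simeq\widehat K_J$, not an equivalence of moment-angle complexes: for $C_5\to C_4$ the total Betti numbers are $12$ and $4$. Your summand-wise set-up correctly isolates the extra summands that this shortcut overlooks. Showing that they cannot trivialize the lifted product is exactly what the paper defers to Grbi\'c--Linton, and it is what your proposal still lacks.
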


Since a link-condition edge contraction is a homotopy equivalence,
$\varphi^*\colon H^*(\mathcal{Z}_{\widehat{K}};\Z)\to H^*(\ZK;\Z)$
is an isomorphism.  The non-trivial Massey product in
$H^*(\mathcal{Z}_{\widehat{K}};\Z)$ pulls back to one that remains
non-trivial in $H^*(\ZK;\Z)$: stretching edges inflates
the moment-angle complex without destroying the non-formality
obstruction.  Iterating this construction produces infinitely many
non-homeomorphic moment-angle complexes carrying non-trivial
Massey products in arbitrarily large cohomological degrees.

\section{Main result}
\label{sec:main}

The following theorem is the main result of this paper.  Its proof
combines the Fern\'{a}ndez de Bobadilla realization
(Theorem~\ref{thm:FdB}), the Grbi\'{c}--Linton join construction
(Theorem~\ref{thm:GL-joins}), and the Ziegler--\v{Z}ivaljevi\'{c}
connectivity estimate (Theorem~\ref{thm:ZZ}).

\begin{theorem}
\label{thm:main}
For every $k \ge 1$ and $n \ge 3$, there exists a weighted-homogeneous
polynomial $f\colon \C^N \to \C$ (for some $N$ depending on $k$ and
$n$) with trivial geometric monodromy, such that the Milnor fiber
$F = f^{-1}(1)$ is $(2k+2)$-connected and non-formal, with
non-formality witnessed by a non-trivial $n$-fold Massey product in
$H^{2n(k+1)+2}(F;\Q)$.
\end{theorem}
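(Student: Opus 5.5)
We take $K^i=\partial\Delta^{k+1}$ for $i=1,\dots,n$, on pairwise disjoint vertex sets $[k+2]_i$, and let $K$ be the complex of Construction~\ref{cons:GL1}. Each $K^i$ is not a simplex. We choose $J_i=[k+2]_i$, so $K^i_{J_i}=K^i\simeq S^k$. For $\alpha_i$ we take a generator of $\widetilde H^k(K^i;\Q)\cong\Q$, so $p_i=k$.

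A cocycle representative is supported on a single $k$-simplex $\sigma$, so $|S_{a_i}|=1$ and $P_{a_i}=P_\sigma$. Here $P_\sigma$ consists of the single other $k$-face containing $\sigma\setminus v_\sigma$ but not $v_\sigma$; the total number of vertices is $m=n(k+2)$.

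Theorem~\ref{thm:GL-joins} then gives a non-trivial $n$-fold Massey product $\langle\alpha_1,\dots,\alpha_n\rangle$ in $H^{L}(\ZK;\Q)$ with
\[
L=nk+n(k+2)+2=2n(k+1)+2,
\]
which is exactly the degree in the statement. Here each $\alpha_i$ sits in $H^{k+(k+2)+1}=H^{2k+3}(\ZK)$ by Hochster's formula~\eqref{eq:hochster}. As a sanity check, $L=n(2k+3)-(n-2)$, in agreement with Definition~\ref{def:massey}.

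Next we set $f=\Phi_K$ as in~\eqref{eq:Phi_K}, with $N=m+r$ and $r$ the number of facets of $K$. By Corollary~\ref{cor:FdB-ZK}, $f$ is weighted-homogeneous with trivial geometric monodromy, and $F=f^{-1}(1)\simeq\ZK$. The Massey product therefore transports to $H^{L}(F;\Q)$.

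Non-formality requires care. The Grbi\'c--Linton product lives in the cellular cochain DGA, so we invoke the Denham--Suciu argument via Watkiss's theorem recalled in \S\ref{subsec:massey-zk} to conclude that a non-trivial Massey product over $\Q$ obstructs rational formality of $\ZK$. This transfers to $F$, since formality is a homotopy invariant. Alternatively, one could compute the product directly in the F\'elix--Tanr\'e model $A(K)$ and apply Lemma~\ref{lem:massey-formal}, since $\ZK$ is simply connected.

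For connectivity we determine $\nu(K)$, following Remark~\ref{rem:GL-connectivity}. The minimal non-faces of the join $K^1*\cdots*K^n$ are the full vertex sets $[k+2]_i$, of size $k+2$. Each star deletion $\sd_{\sigma\cup\sigma'}$ makes $\sigma\cup\sigma'$ a non-face; it has size $2(k+1)=2k+2$. We must check that any new minimal non-face contained in $\sigma\cup\sigma'$ still has size at least $k+2$. Subsets of $\sigma\cup\sigma'$ not containing it remain faces unless they contain another deleted set or some $[k+2]_i$, so the new minimal non-faces are exactly the deleted sets $\sigma\cup\sigma'$ that contain no earlier non-face. This gives $\nu(K)=k+2$, using $2k+2\ge k+2$. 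Theorem~\ref{thm:ZZ} with $\nu=k+1$ then shows that $\ZK$, and hence $F$, is $(2k+2)$-connected.

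The main obstacle is the non-formality transfer from the cellular-cochain Massey product to rational non-formality; the citation to Denham--Suciu/Watkiss handles this. The combinatorial check of $\nu(K)$ is straightforward but must be stated carefully.
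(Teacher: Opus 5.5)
Your proposal is correct and follows essentially the same route as the paper's proof. You take $K^i=\partial\Delta^{k+1}$, apply the Grbi\'c--Linton join/star-deletion construction, read off the degree $2n(k+1)+2$ from Theorem~\ref{thm:GL-joins} (with non-formality via Denham--Suciu/Watkiss), check that the minimal non-faces are the blocks $[k+2]_i$ and the deleted sets of size $2k+2$ so that Theorem~\ref{thm:ZZ} gives $(2k+2)$-connectivity, and realize $\ZK$ as the Milnor fiber of $\Phi_K$ via Corollary~\ref{cor:FdB-ZK}.
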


\begin{proof}
We construct the desired polynomial in four steps.

\smallskip
\noindent\textbf{Step 1: Construction of the simplicial complex $K$.}
For each $i = 1, \dots, n$, let $K^i = \partial\Delta^{k+1}$ be the
boundary of the $(k+1)$-simplex, on vertex set $[k+2]_i$ (disjoint
copies for different $i$).  This is a simplicial complex on $m_i = k+2$
vertices whose unique minimal non-face is the full simplex
$[k+2]_i$ of cardinality $k+2$.  In particular, $K^i$ is not a
simplex, so there exists a non-trivial class $\alpha_i \in
\widetilde{H}^k(K^i;\Z) \cong \Z$.

Apply the Grbi\'{c}--Linton Construction~\ref{cons:GL1} to the
join $K^1 * \cdots * K^n$: choose cocycle representatives $a_i$ for
$\alpha_i$, form the sets $S_{a_i}$ (supports of $a_i$) and $P_{a_i}$
(the associated partner simplices), and let
\[
K = \Bigl(\prod_{\substack{1\le i<j\le n\\ (i,j)\ne(1,n)}}
  \sd_{\sigma_i\cup\sigma_j'}\Bigr) (K^1 * \cdots * K^n),
\]
where the star deletions run over all $\sigma_i \in S_{a_i}$,
$\sigma_j' \in P_{a_j}$, $1 \le i < j \le n$, $(i,j) \ne (1,n)$.
The resulting simplicial complex $K$ lies on $m = n(k+2)$ vertices.
(See Figure~\ref{fig:star-deletion} for the case $K^1=K^2=S^0$, $n=3$,
which corresponds to $k=0$.)

\smallskip
\noindent\textbf{Step 2: Connectivity of $\ZK$.}
We show that every minimal non-face of $K$ has cardinality $\ge k+2$.
The minimal non-faces of the join $K^1 * \cdots * K^n$ are exactly the
missing faces $[k+2]_i$, of cardinality $k+2$.  A star deletion
$\sd_{\sigma_i \cup \sigma_j'}$ removes precisely the faces containing
$\sigma_i \cup \sigma_j'$, hence makes $\sigma_i \cup \sigma_j'$ a
minimal non-face and creates no other: since $\sigma_i \in S_{a_i}$
and $\sigma_j' \in P_{a_j}$ are $k$-simplices on disjoint vertex sets,
$\sigma_i \cup \sigma_j'$ has cardinality $2k+2$, and each of its
proper subsets---of cardinality $\le 2k+1$, too small to contain any
deleted face---is a face of the join that survives every deletion.
The minimal non-faces of $K$ are therefore the $[k+2]_i$ (cardinality
$k+2$) and the deleted faces $\sigma_i \cup \sigma_j'$ (cardinality
$2k+2$); thus $\nu(K) = k+2$, attained by the $[k+2]_i$.

By Theorem~\ref{thm:ZZ}, $\nu(K)\ge k+2 = (k+1)+1$ implies that
$\ZK$ is $(2k+2)$-connected.  Hochster's formula~\eqref{eq:hochster}
moreover locates the first non-vanishing reduced cohomology in degree
$2\nu(K)-1 = 2k+3$: a nonzero summand $\widetilde{H}^{p}(K_J;\Z)$
requires $J$ to contain a minimal non-face, and the smallest such $J$
are the single faces $[k+2]_i$, for which
$K_J = \partial\Delta^{k+1}\simeq S^k$ contributes $\widetilde{H}^k$
at total degree $(k+2)+k+1 = 2k+3$.  Hence $H^{2k+2}(\ZK;\Z)=0$ while
$H^{2k+3}(\ZK;\Z)\ne 0$; the classes $\alpha_i$ used below live in
this bottom degree.

\smallskip
\noindent\textbf{Step 3: Non-formality of $\ZK$ via a non-trivial
$n$-fold Massey product.}
By Theorem~\ref{thm:GL-joins}, the construction of Step~1 produces a non-trivial
$n$-fold Massey product
\[
\langle \alpha_1, \dots, \alpha_n \rangle \subset H^{2n(k+1)+2}(\ZK;\Q),
\]
where $\alpha_i \in H^{2k+3}(\ZK;\Q)$ is the Hochster class of the generator of
$\widetilde{H}^k(K^i;\Q)$.  The degree $2n(k+1)+2$ follows from the GL degree
formula $p_1+\cdots+p_n+|J_1\cup\cdots\cup J_n|+2$ with $p_i=k$ and
$|J_1\cup\cdots\cup J_n|=n(k+2)$.  Hence $\ZK$ is non-formal
(\S\ref{subsec:massey-zk}).

\smallskip
\noindent\textbf{Step 4: Realization as a Milnor fiber.}
Apply Corollary~\ref{cor:FdB-ZK} to the simplicial complex $K$
constructed in Step~1: with $F_1,\dots,F_r$ the facets of $K$ on
$m=n(k+2)$ vertices, the weighted-homogeneous polynomial
\begin{equation}
\label{eq:Phi-thm-main}
\Phi_K(x_1,\dots,x_m,\,y_1,\dots,y_r) = 
\sum_{j=1}^{r} y_j \prod_{i\in[m]\setminus F_j} x_i
\end{equation}
has trivial geometric monodromy, and its Milnor fiber 
$F_K = \Phi_K^{-1}(1)$ is homotopy equivalent to $\ZK$.

\smallskip
\noindent\textbf{Conclusion.}
Set $f = \Phi_{K}$ and $N = m + r = n(k+2) + r$.  By Step~4 the Milnor
fiber $F = F_K$ is homotopy equivalent to $\ZK$ and has trivial
geometric monodromy.  By Step~2, $\ZK$ is $(2k+2)$-connected, so $F$ is
$(2k+2)$-connected.  By Step~3, $\ZK$ is non-formal with a non-trivial
$n$-fold Massey product in $H^{2n(k+1)+2}(\ZK;\Z)$, which pulls back to
a non-trivial $n$-fold Massey product in $H^{2n(k+1)+2}(F;\Z)$.  This
completes the proof.
\end{proof}

An explicit infinite family realizing Theorem~\ref{thm:main} for $n=3$ 
and $K^i = \partial\Delta^{k+1}$ is given in Theorem~\ref{thm:family}, 
where the exact formality degree $6(k+1)$ is also determined.

\begin{remark}
\label{rem:fdb-massey}
The case $k=0$, $n=3$ (outside the range $k\ge 1$ of
Theorem~\ref{thm:main}) recovers the Fern\'{a}ndez de Bobadilla
example \cite{FdB}: here $K^i = \partial\Delta^1 = S^0$ consists
of two disjoint vertices, and the star-deletion construction
of Step~1 produces one of the five obstruction graphs classified
in \cite[\S 6.1]{DS07}.  The resulting moment-angle complex $\ZK$
is $2$-connected, and the non-trivial triple Massey product lives 
in $H^{2\cdot3\cdot1+2}(\ZK;\Z)=H^8(\ZK;\Z)$,
witnessing non-formality as in \cite[\S 6.2]{DS07}.
\end{remark}


\section{Explicit families and the Kato--Matsumoto bound}
\label{sec:examples}

This section develops the construction of Theorem~\ref{thm:main}
in explicit detail. \S\ref{subsec:sing} computes the dimension of the 
singular set $\Sing(V(\Phi_K))$ for any polynomial 
$\Phi_K=\sum_{F} y_F g_F$ of the Fern\'andez de Bobadilla form, 
via a combinatorial formula for the rank of the Jacobian of the 
generators (Lemma~\ref{lem:jac-rank}); it specializes 
to $s=m+r-2\nu$ under a mild non-degeneracy condition. 
\S\ref{subsec:fdb-example} works out the original
Fern\'andez de Bobadilla polynomial as an example.
\S\S\ref{subsec:Kq-combinatorics} and \ref{subsec:Phi-Kq-Milnor}
focus on the one-parameter family arising from the Grbi\'c--Linton
Construction~\ref{cons:GL1} with $n=3$ and $K^i = \partial\Delta^q$:
\S\ref{subsec:Kq-combinatorics} introduces the simplicial
complexes $K(q)$ and establishes their combinatorial properties
(facet structure, homotopy type, and the cohomology of $\ZK$);
\S\ref{subsec:Phi-Kq-Milnor} then states the main theorem about the
polynomial $\Phi_K$ and its Milnor fiber, and grounds it in the
first concrete example $K(2)$.
\S\ref{subsec:further-remarks} records the closed-form invariants of
the family, its general-$n$ analogue, and the role of
weighted-homogeneity, with Macaulay2 \cite{M2} confirming the explicit ranks.
Throughout, the Kato--Matsumoto (KM) bound $N-s-2$ on the
connectivity of $F_K\simeq\ZK$ is compared with the true
connectivity.


\subsection{The dimension of the singular set}
\label{subsec:sing}

We compute $s=\dim\Sing(V(\Phi_K))$ for an arbitrary polynomial
$\Phi_K=\sum_F y_F\,g_F$ of the Fern\'andez de Bobadilla form.  The
computation rests on a stratum-by-stratum evaluation of the rank of the
Jacobian $(\partial g_F/\partial x_i)$, which is purely combinatorial
(Lemma~\ref{lem:jac-rank}); the resulting dimension formula
(Proposition~\ref{prop:sing}) specializes to $s=m+r-2\nu$ under a mild
non-degeneracy condition satisfied by every complex considered here
(Remark~\ref{rem:sing-hypothesis}).

For $Z\subseteq[m]$ write
\[
L_Z^\circ = \{x\in\C^m : x_i = 0 \iff i\in Z\} \cong (\C^*)^{m-|Z|};
\] 
these are the strata of the coordinate stratification of $\C^m$. Their union over the
non-faces is exactly the arrangement,
$V(\A_K) = \bigsqcup_{Z\notin K} L_Z^\circ$, whose maximal (top-dimensional)
strata are the $L_\sigma^\circ$ with $\sigma$ a minimal non-face of minimum
cardinality $\nu$. The Jacobian $J = (\partial g_F/\partial x_j)$ has constant
rank on each stratum $L_Z^\circ$.

\begin{lemma}
\label{lem:jac-rank}
For every $Z\subseteq[m]$ containing a minimal non-face,
\[
  \operatorname{rank} J|_{L_Z^\circ}
  =\#\{\,i\in Z: Z\setminus\{i\}\in K\,\}
  =\Bigl|\,\textstyle\bigcap_{\tau\subseteq Z}\tau\,\Bigr|
  \eqqcolon \rho_Z,
\]
the intersection running over the minimal non-faces $\tau\subseteq Z$.
\end{lemma}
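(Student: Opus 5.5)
The plan is to compute the Jacobian $J=(\partial g_F/\partial x_j)$ entry by entry at a point $x\in L_Z^\circ$, using only which coordinates of $x$ vanish. It will turn out that every row has at most one nonzero entry, so the rank equals the number of nonzero columns.

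\emph{Step 1: the entries.} Fix $x\in L_Z^\circ$, so $x_i=0$ exactly when $i\in Z$. By~\eqref{eq:gF} we have $g_F=\prod_{i\in[m]\setminus F}x_i$. Hence $\partial g_F/\partial x_j$ is zero unless $j\in[m]\setminus F$, and in that case it equals $\prod_{i\in([m]\setminus F)\setminus\{j\}}x_i$. This product is nonzero at $x$ if and only if $\bigl(([m]\setminus F)\cap Z\bigr)\subseteq\{j\}$.

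Since $Z$ contains a minimal non-face, $Z\notin K$, so $Z\not\subseteq F$ for every facet $F$. Therefore $([m]\setminus F)\cap Z$ is never empty. It follows that $\partial g_F/\partial x_j(x)\neq 0$ if and only if
\[
([m]\setminus F)\cap Z=\{j\},
\]
that is, $j\in Z$ and $Z\setminus\{j\}\subseteq F$. This condition depends only on $Z$, not on the particular point $x$, which shows that the rank of $J$ is constant on the stratum.

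\emph{Step 2: the rank.} By Step~1, the row indexed by $F$ has at most one nonzero entry, namely in the column $j$ determined by $F$ (the unique element of $Z$ outside $F$). A column $j$ is nonzero if and only if $j\in Z$ and some facet $F$ contains $Z\setminus\{j\}$. Such a facet automatically omits $j$, because $Z\notin K$. So column $j$ is nonzero if and only if $j\in Z$ and $Z\setminus\{j\}\in K$.

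Each nonzero row is a nonzero multiple of a standard basis vector $e_j$, so the row space is spanned by the $e_j$ over the nonzero columns $j$. Hence
\[
\operatorname{rank}J|_{L_Z^\circ}=\#\{\,i\in Z: Z\setminus\{i\}\in K\,\}.
\]

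\emph{Step 3: the second equality.} The set $Z\setminus\{i\}$ is a face if and only if it contains no minimal non-face. Equivalently, every minimal non-face $\tau\subseteq Z$ contains $i$. So for $i\in Z$ we have $Z\setminus\{i\}\in K$ if and only if $i\in\bigcap_{\tau\subseteq Z}\tau$. This intersection is taken over a nonempty family, since $Z$ contains at least one minimal non-face, so it is a subset of $Z$ and its cardinality is $\rho_Z$.

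None of the steps is a real obstacle. The only point needing care is Step~1, where the hypothesis $Z\notin K$ is used to rule out $([m]\setminus F)\cap Z=\emptyset$ and so force at most one nonzero entry per row.
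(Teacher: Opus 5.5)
Your proof is correct and follows essentially the same route as the paper. You show that each facet $F$ contributes at most one nonzero Jacobian entry, at the unique $j$ with $Z\setminus F=\{j\}$, so the rank counts the $j\in Z$ with $Z\setminus\{j\}\in K$, which you then identify with $\bigcap_{\tau\subseteq Z}\tau$; the only difference is that you index rows by facets rather than variables, which transposes the matrix without affecting the rank.
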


\begin{proof}
For $i\notin F$ one has $\partial g_F/\partial x_i=\prod_{l\notin F\cup\{i\}}x_l$,
while $\partial g_F/\partial x_i=0$ for $i\in F$.  Restricted to $L_Z^\circ$,
the $(i,F)$ entry is therefore nonzero iff $i\notin F$ and
$Z\setminus\{i\}\subseteq F$, i.e.\ iff $Z\setminus F=\{i\}$ (which forces
$i\in Z$).  For each facet $F$ this determines at most one row index, so
$J|_{L_Z^\circ}$ has at most one nonzero entry per column; its rank thus
equals the number of nonzero rows, namely
$\#\{i\in Z:\exists\,F,\ Z\setminus F=\{i\}\}=\#\{i\in Z:Z\setminus\{i\}\in K\}$,
the second equality because $Z\notin K$: if $Z\setminus\{i\}\in K$ then,
$Z$ not being a face, some facet contains $Z\setminus\{i\}$ but not $i$.
Finally $Z\setminus\{i\}\in K$ iff $Z\setminus\{i\}$ contains no minimal
non-face, i.e.\ iff $i$ lies in every minimal non-face $\tau\subseteq Z$;
the count therefore equals $\bigl|\bigcap_{\tau\subseteq Z}\tau\bigr|$.
\end{proof}

\begin{prop}
\label{prop:sing}
Let $\nu=\nu(K)$.  Then
\begin{equation}
\label{eq:dimSing-general}
  \dim\Sing(V(\Phi_K)) = m+r-\min_Z\bigl(|Z|+\rho_Z\bigr),
\end{equation}
the minimum over all $Z\subseteq[m]$ containing a minimal non-face, and
$V(\A_K)\times\{0\}\subseteq\Sing(V(\Phi_K))$.  Moreover
$\min_Z(|Z|+\rho_Z)\le 2\nu$; if $|Z|+\rho_Z\ge 2\nu$ for every such $Z$,
then
\begin{equation}
\label{eq:dimSing}
  \dim\Sing(V(\Phi_K)) = m+r-2\nu,
  \qquad
  \codim_{V(\Phi_K)}\Sing = 2\nu-1.
\end{equation}
\end{prop}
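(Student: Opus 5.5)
We compute the critical locus of $\Phi_K(x,y)=\sum_F y_F g_F(x)$ directly from its partial derivatives, which are
\[
  \partial\Phi_K/\partial y_F=g_F(x),\qquad
  \partial\Phi_K/\partial x_i=\textstyle\sum_F y_F\,\partial g_F/\partial x_i .
\]
A point $(x,y)$ is therefore singular if and only if two conditions hold. First, all $g_F(x)=0$, i.e.\ $x\in V(I_{K^\vee})=V(\A_K)$ by \eqref{eq:VAK}. Second, $y$ lies in the left kernel of the Jacobian, $J(x)^{T}y=0$, where $J(x)=(\partial g_F/\partial x_i)$. These two conditions also force $\Phi_K=0$, since $\Phi_K=\sum y_Fg_F$. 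Thus $\Sing(V(\Phi_K))$ fibres over $V(\A_K)=\bigsqcup_{Z\notin K}L_Z^\circ$. Over each stratum $L_Z^\circ$ the fibre is the linear space $\ker J(x)^T\subseteq\C^r$, of dimension $r-\operatorname{rank}J|_{L_Z^\circ}=r-\rho_Z$ by Lemma~\ref{lem:jac-rank}. The rank is constant on the stratum, so the piece over $L_Z^\circ$ is a vector bundle over $(\C^*)^{m-|Z|}$, of dimension $(m-|Z|)+(r-\rho_Z)$.

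Taking the union over the finitely many strata (those $Z\notin K$, i.e.\ containing a minimal non-face), the dimension of a finite union of locally closed constructible pieces is the maximum of the piece dimensions. This gives \eqref{eq:dimSing-general}. The inclusion $V(\A_K)\times\{0\}\subseteq\Sing$ is the special case $y=0$ of the description above.

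For the bound $\min_Z(|Z|+\rho_Z)\le 2\nu$, take $Z=\sigma$ a minimal non-face of size $\nu$. Then $\sigma$ is the only minimal non-face contained in $Z$, so $\rho_\sigma=|\sigma|=\nu$, and $|Z|+\rho_Z=2\nu$. Under the stated hypothesis the minimum is therefore exactly $2\nu$, which gives $\dim\Sing=m+r-2\nu$. The codimension formula then follows: $V(\Phi_K)$ is a hypersurface in $\C^{m+r}$, hence of pure dimension $m+r-1$, so $\codim_{V(\Phi_K)}\Sing=(m+r-1)-(m+r-2\nu)=2\nu-1$.

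The main point needing care is the dimension bookkeeping over strata. I would check that each piece is a genuine constructible set of the claimed dimension. The fibre dimension is constant by Lemma~\ref{lem:jac-rank}, but I would still verify that $\ker J^T$ varies algebraically, so that the piece is a vector bundle and not merely a set with constant fibre dimension. I would also keep track of the fact that the singular set here is that of the function $\Phi_K$, i.e.\ its critical set, which coincides with $\Sing(V(\Phi_K))$ because both are cut out by the vanishing of all partials. Beyond this, everything is formal given the lemma.
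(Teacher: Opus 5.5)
Your proposal is correct and follows the same route as the paper's proof. You describe $\Sing(V(\Phi_K))$ as the kernel bundle of the Jacobian $(\partial g_F/\partial x_i)$ over $V(\A_K)=\bigsqcup_{Z\notin K}L_Z^\circ$, take the fibre dimension $r-\rho_Z$ on each stratum from Lemma~\ref{lem:jac-rank}, maximize over strata, and use a minimal non-face of size $\nu$ to get $\min_Z(|Z|+\rho_Z)\le 2\nu$. Writing $J(x)^Ty=0$ instead of $J(x)y=0$ is only a transpose convention, and your side remarks refine steps the paper takes for granted: that $g_F(x)=0$ already forces $\Phi_K=0$, that the kernel varies algebraically, and that the critical set equals $\Sing$ of the hypersurface (this last uses that $\Phi_K$ is reduced, which holds because it is linear in $y$ with squarefree monomial coefficients).
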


\begin{proof}
Since $\Phi_K$ is weighted-homogeneous, $\Sing(V(\Phi_K))$ is the critical
locus $\{\nabla\Phi_K=0\}$.  The equations
$\partial\Phi_K/\partial y_F=g_F(x)$ vanish exactly when $x\in V(\A_K)$, as
the $g_F$ generate $I_{K^\vee}=I(V(\A_K))$ by~\eqref{eq:IVAK}; for such $x$
the equations $\partial\Phi_K/\partial x_i=\sum_F y_F\,\partial g_F/\partial x_i$
read $J(x)\,y=0$.  Hence $\Sing(V(\Phi_K))$ is the total space of the kernel
bundle of $J$ over $V(\A_K)$, and $V(\A_K)\times\{0\}$ lies in it.  Over each
stratum $L_Z^\circ\cong(\C^*)^{m-|Z|}$ this bundle has rank
$r-\rho_Z$ by Lemma~\ref{lem:jac-rank}, hence dimension $(m-|Z|)+(r-\rho_Z)$;
maximizing over strata gives~\eqref{eq:dimSing-general}.

For $Z=\sigma$ a minimal non-face of minimum cardinality $\nu$, $\sigma$ is
the only minimal non-face it contains, so $\rho_\sigma=|\sigma|=\nu$ and
$|Z|+\rho_Z=2\nu$; thus $\min_Z(|Z|+\rho_Z)\le 2\nu$.  Under the stated
inequality this minimum equals $2\nu$, so $\dim\Sing(V(\Phi_K))=m+r-2\nu$;
as $\dim V(\Phi_K)=N-1=m+r-1$, the codimension is $2\nu-1$.
\end{proof}

\begin{remark}
\label{rem:sing-hypothesis}
The condition $|Z|+\rho_Z\ge 2\nu$ is combinatorial and mild.  On the
top stratum ($Z=\sigma$, $|\sigma|=\nu$) it holds automatically: as
$\sigma$ is a minimal non-face, each $\sigma\setminus\{i\}$ is a face, so
some facet contains it but not $i$, whence $\rho_\sigma=\nu$ and
$|Z|+\rho_Z=2\nu$.  For larger $Z$ it is automatic when $Z$
contains at most two minimal non-faces: with one, $\rho_Z=|\tau|$ and
$|Z|+\rho_Z\ge 2|\tau|\ge 2\nu$; with two, $\rho_Z=|\tau_1\cap\tau_2|$ while
$|Z|\ge|\tau_1\cup\tau_2|=|\tau_1|+|\tau_2|-|\tau_1\cap\tau_2|$, so again
$|Z|+\rho_Z\ge|\tau_1|+|\tau_2|\ge 2\nu$.  It can fail only for three or more
minimal non-faces with empty common intersection inside a set of size
$<2\nu$.  Both the family $K(q)$ of \S\ref{subsec:Kq-combinatorics} and the
Fern\'andez de Bobadilla complex of \S\ref{subsec:fdb-example} satisfy it,
so Proposition~\ref{prop:sing} yields $s=m+r-2\nu$ in each case; the minimum
$2\nu$ is attained on the top stratum and on certain deeper strata as well,
without being exceeded (for instance $Z=\{1,2,3\}$ in the FdB example, with
$\rho_Z=1$, and $Z=V_1\cup V_2$ for $K(q)$, with $\rho_Z=0$, both giving
$|Z|+\rho_Z=2\nu$).
\end{remark}

\subsection{The Fern\'andez de Bobadilla polynomial}
\label{subsec:fdb-example}

Let $K$ be the flag complex on $m = 6$ vertices $\{1, \dots, 6\}$ 
whose minimal non-faces are the five consecutive edges 
$\{1,2\}, \{2,3\}, \{3,4\}, \{4,5\}$, $\{5,6\}$ (the $1$-skeleton 
is the path graph $P_5$).  Hence $\nu(K) = 2$.  The facets of $K$ 
are the maximal independent sets of $P_5$: the four $3$-element 
sets $\{1,3,5\}$, $\{1,3,6\}$, $\{1,4,6\}, \{2,4,6\}$ and the 
single $2$-element set $\{2, 5\}$, giving $r = 5$, $N = 11$.  
Applying Corollary~\ref{cor:FdB-ZK}, the generators of $I_{K^\vee}$ 
are
\[
g_{\{1,3,5\}}=x_2x_4x_6,\;\;
g_{\{1,3,6\}}=x_2x_4x_5,\;\;
g_{\{1,4,6\}}=x_2x_3x_5,\;\;
g_{\{2,4,6\}}=x_1x_3x_5,\;\;
g_{\{2,5\}}=x_1x_3x_4x_6,
\]
and
\[
\Phi_K  = y_1\,x_1x_3x_4x_6 + y_2\,x_1x_3x_5 + y_3\,x_2x_3x_5
    + y_4\,x_2x_4x_6 + y_5\,x_2x_4x_5
\]
is weighted-homogeneous of degree $5$, with $w(x_j) = 1$, 
$w(y_1) = 1$, $w(y_2) = w(y_3) = w(y_4) = w(y_5) = 2$.  This is 
exactly the polynomial of \cite[\S 3]{FdB}.

The path $1\text{--}2\text{--}\cdots\text{--}6$ is one of the 
five Denham--Suciu obstruction graphs \cite[Thm.~6.1.1]{DS07}, so 
$H^*(\ZK; \Z)$ carries a non-trivial triple Massey product 
\[
\langle \alpha_{12}, \alpha_{34}, \alpha_{56} \rangle  \in H^{8}(\ZK; \Z)
\]
with trivial indeterminacy \cite[\S 6.2]{DS07}, where 
$\alpha_{ij} \in H^3(\ZK;\Z)$ is the Hochster class of the 
missing edge $\{i, j\}$.  Hence $F_K \simeq \ZK$ is $2$-connected 
and non-formal, with Poincar\'e polynomial 
$P_{F_K}(t) = 1 + 5t^3 + 4t^4 + 3t^6 + 4t^7 + t^8$ 
(see \cite[\S 3]{FdB}).

The non-degeneracy condition of Proposition~\ref{prop:sing} holds here.  
For a minimal edge $\sigma$, Lemma~\ref{lem:jac-rank} gives $\rho_\sigma = 2 = \nu$
(for instance $\sigma=\{1,2\}$: the facet $\{2,4,6\}$ contains $\{2\}$ but
not $1$, and $\{1,3,5\}$ contains $\{1\}$ but not $2$), so
$|\sigma|+\rho_\sigma = 2\nu$.  The condition $|Z|+\rho_Z\ge 2\nu$ holds for
every non-face $Z$: it is automatic when $Z$ contains at most two of the
edges (Remark~\ref{rem:sing-hypothesis}), while any three or more edges of
the path $P_5$ span at least $4=2\nu$ vertices, so $|Z|\ge 2\nu$.  Hence
Proposition~\ref{prop:sing} gives 
$s = \dim\Sing(V(\Phi_K)) = m + r - 2\nu = 6 + 5 - 4 = 7$, 
and the Kato--Matsumoto bound $N - s - 2 = 11 - 7 - 2 = 2$ 
equals the true connectivity of $F_K$: the bound is sharp, 
exactly as for the family of Theorem~\ref{thm:family}.


\subsection{Combinatorial properties of $K(q)$}
\label{subsec:Kq-combinatorics}

We now introduce the one-parameter family of simplicial complexes 
$K = K(q)$ obtained from the Grbi\'c--Linton Construction~\ref{cons:GL1} 
with $n = 3$ and $K^i = \partial\Delta^q$, and establish their 
combinatorial properties: facet structure (Lemma~\ref{lem:facets}), 
type-B facet count (Lemma~\ref{lem:facets}), homotopy type 
(Proposition~\ref{prop:K-homotopy}), and the cohomology 
of $\ZK$ (Corollary~\ref{cor:cohomology}).

Throughout this and the next subsection, fix an integer $q \ge 1$ and set
$V = V_1 \sqcup V_2 \sqcup V_3$, the disjoint union of three vertex sets each of
cardinality $q+1$, with the elements of $V_i$ written $1_i, 2_i, \dots, (q+1)_i$.
Write
\[
  \sigma_i = V_i \setminus \{(q+1)_i\}, \qquad
  \sigma_i' = V_i \setminus \{1_i\}
\]
for two facets of $K^i = \partial\Delta^q$. The Baskakov cocycle on $K^i$ is the
characteristic function $a_i = \chi_{\sigma_i}$, so $S_{a_i} = \{\sigma_i\}$ and
$|S_{a_i}| = 1$. Choosing the distinguished vertex $v_{\sigma_i} = 1_i$ in
Construction~\ref{cons:GL1} gives
$\sigma_i \cap \sigma_i' = \sigma_i \setminus \{1_i\} = V_i \setminus \{1_i,(q+1)_i\}$,
so $P_{a_i} = P_{\sigma_i} = \{\sigma_i'\}$ and $|P_{a_i}| = 1$: the unique
partner facet is $\sigma_i'$.

\begin{definition}
\label{def:Kq}
The simplicial complex $K = K(q)$ on the vertex set $V$ is
\begin{equation}
\label{eq:Kq-def}
  K(q) = \sd_{\tau^{12}}\, \sd_{\tau^{23}}\,(K^1 \ast K^2 \ast K^3),
\end{equation}
where $\tau^{12} = \sigma_1 \cup \sigma_2'$ and $\tau^{23} = \sigma_2 \cup \sigma_3'$
are the two GL star deletions of Construction~\ref{cons:GL1} for $n=3$
(the pairs $(1,2)$ and $(2,3)$). Equivalently, $K(q)$ is the simplicial complex
on $V$ whose minimal non-faces are the three full vertex sets $V_1,V_2,V_3$
(cardinality $q+1$) together with $\tau^{12}$ and $\tau^{23}$ (cardinality $2q$);
in particular $\nu(K) = q+1$.
\end{definition}

In particular, $m = 3(q+1)$, and 
$\nu(K) = \min(q+1, 2q) = q+1$ for every $q \ge 1$.

\begin{lemma}
\label{lem:facets}
For every $q \ge 1$, the facets of $K(q)$ are of two types:
\begin{itemize}
\item \emph{Type A} (cardinality $3q$): the facets $F_1\cup F_2\cup F_3$ of
$K^1 \ast K^2 \ast K^3$ surviving the two star deletions; there are exactly
$(q+1)^3 - 2(q+1) = (q+1)(q^2+2q-1)$ of these.
\item \emph{Type B} (cardinality $3q-1$): a single facet,
$F_B = V \setminus \{(q+1)_1, 1_2, (q+1)_2, 1_3\}$.
\end{itemize}
Consequently
\begin{equation}
\label{eq:r-formula}
  r = r(q) = (q+1)^3 - 2(q+1) + 1 = q^3 + 3q^2 + q .
\end{equation}
\end{lemma}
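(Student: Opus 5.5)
The plan is to work directly from the description of $K(q)$ by its minimal non-faces: $V_1,V_2,V_3$, $\tau^{12}=\sigma_1\cup\sigma_2'$ and $\tau^{23}=\sigma_2\cup\sigma_3'$. A subset $S\subseteq V$ is a face exactly when it contains none of these five sets. Writing $S_i=S\cap V_i$, this means each $S_i\ne V_i$, and it is not the case that both $S_1\supseteq\sigma_1$ and $S_2\supseteq\sigma_2'$, nor that both $S_2\supseteq\sigma_2$ and $S_3\supseteq\sigma_3'$. The facets of the join $K^1\ast K^2\ast K^3$ are the sets $F_1\cup F_2\cup F_3$ with $F_i=V_i\setminus\{v_i\}$, giving $(q+1)^3$ of them, each of cardinality $3q$.

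\textbf{Type A facets.} A join facet survives both deletions iff it contains neither $\tau^{12}$ nor $\tau^{23}$. It contains $\tau^{12}$ iff $F_1=\sigma_1$ and $F_2=\sigma_2'$, with $F_3$ arbitrary: that is $q+1$ facets. It contains $\tau^{23}$ iff $F_2=\sigma_2$ and $F_3=\sigma_3'$: another $q+1$ facets. These two families are disjoint, since $\sigma_2\ne\sigma_2'$ because $q\ge1$. Hence exactly $(q+1)^3-2(q+1)$ join facets survive. Each survivor is a face of $K(q)$ of the maximal possible size $3q$, hence a facet; and $(q+1)^3-2(q+1)=(q+1)(q^2+2q-1)$.

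\textbf{Remaining facets.} Any face $S$ of $K(q)$ satisfies $S_i\ne V_i$, so it lies in some join facet $F_1\cup F_2\cup F_3$. If it lies in a surviving one, it is not a new facet. So a new facet $S$ must have every join facet containing it be deleted. I will split by which $S_i$ are full facets of $\partial\Delta^q$, i.e.\ have $|S_i|=q$.

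If some $S_i$ has size $\le q-1$, then $S_i$ can be enlarged to at least two distinct facets of $V_i$. The main check is that one of these choices avoids both deletions. If $S_2$ is small, choose $F_2\notin\{\sigma_2,\sigma_2'\}$ when possible, or otherwise choose it so that neither deletion is completed. If $S_1$ or $S_3$ is small, choose $F_1\ne\sigma_1$ or $F_3\ne\sigma_3'$ respectively; the case where both deletions threaten must be handled separately. This case analysis is the step I expect to be the main obstacle.

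Once it is done, a new facet must have $S_1=\sigma_1$, $S_3=\sigma_3'$, and $S_2\subseteq\sigma_2\cap\sigma_2'$. Indeed $S_2$ cannot contain $\sigma_2'$ (otherwise $\tau^{12}\subseteq S$) or $\sigma_2$ (otherwise $\tau^{23}\subseteq S$). By maximality this gives $S_2=V_2\setminus\{1_2,(q+1)_2\}$, and hence
\[
S=F_B=V\setminus\{(q+1)_1,1_2,(q+1)_2,1_3\},
\]
of cardinality $3q-1$.

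Finally, I verify that $F_B$ is indeed a facet. It avoids all five minimal non-faces. Adding $1_2$ creates $\tau^{23}$, adding $(q+1)_2$ creates $\tau^{12}$, and adding $(q+1)_1$ or $1_3$ completes $V_1$ or $V_3$; so $F_B$ is maximal. Summing the two types gives
\[
r=(q+1)^3-2(q+1)+1=q^3+3q^2+q.
\]
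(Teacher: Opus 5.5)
You have the Type A count right, and your check that $F_B$ is a facet is correct. The gap is in the core of the lemma: that $F_B$ is the \emph{only} facet not coming from a surviving join facet. That is exactly the case analysis you defer, and what you do say about it is false as stated.

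You claim that whenever some $S_i$ has size $\le q-1$, one of its extensions avoids both deletions. This fails for $S=F_B$ itself. There $S_2=\sigma_2\cap\sigma_2'$ has size $q-1$, and its only extensions are $\sigma_2$ and $\sigma_2'$. Each of these completes a deleted face, because $S_1=\sigma_1$ and $S_3=\sigma_3'$ are already full.

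Relatedly, your justification ``$S_2$ contains neither $\sigma_2'$ nor $\sigma_2$'' does not give $S_2\subseteq\sigma_2\cap\sigma_2'$. For $q\ge 2$, $S_2=V_2\setminus\{2_2\}$ contains neither, yet contains $1_2$ and $(q+1)_2$. That inclusion needs the fact that $S$ lies in no surviving join facet, which is precisely the unproved step.

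The missing step is short once you parametrize by deleted vertices:
\begin{itemize}
\item A join facet containing $S$ is $V\setminus\{v_1,v_2,v_3\}$ with $v_i\in A_i:=V_i\setminus S_i\ne\varnothing$.
\item That facet is removed iff $(v_1,v_2)=((q+1)_1,1_2)$ or $(v_2,v_3)=((q+1)_2,1_3)$.
\item $S$ being a face means $(A_1,A_2)\ne(\{(q+1)_1\},\{1_2\})$ and $(A_2,A_3)\ne(\{(q+1)_2\},\{1_3\})$.
\end{itemize}
Now split on $A_2$:
\begin{itemize}
\item If $A_2$ has a vertex outside $\{1_2,(q+1)_2\}$, choose it as $v_2$; neither deletion can then occur.
\item If $A_2=\{1_2\}$, the face condition gives $A_1\ne\{(q+1)_1\}$, so choose $v_1\ne(q+1)_1$. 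The case $A_2=\{(q+1)_2\}$ is symmetric.
\item If $A_2=\{1_2,(q+1)_2\}$, one of the two choices of $v_2$ works unless $A_1=\{(q+1)_1\}$ and $A_3=\{1_3\}$. That exceptional configuration is exactly $S=F_B$.
\end{itemize}
So every face other than $F_B$ lies in a surviving join facet, and uniqueness of the type-B facet follows.

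With this inserted, your route is a legitimate alternative to the paper's. The paper takes complements of minimal transversals $W$ of $\{V_1,V_2,V_3,\tau^{12},\tau^{23}\}$. It uses that each vertex of a minimal transversal needs a private hyperedge to force $|W_1|=|W_3|=1$ and to pin down the unique $4$-element $W$. Your face-extension argument stays inside the join. It also does not need the fact, left implicit in the paper, that only transversals with $|W|\in\{3,4\}$ occur.
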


\begin{proof}
Facets of $K$ are the complements $F = V\setminus W$ of the minimal transversals
$W$ of the minimal-non-face hypergraph $\{V_1,V_2,V_3,\tau^{12},\tau^{23}\}$;
type-A facets ($|F|=3q$) correspond to transversals with $|W|=3$, type-B facets
($|F|=3q-1$) to those with $|W|=4$.

\emph{Type A.} Each $K^i$ has $q+1$ facets, so $K^1\ast K^2\ast K^3$ has
$(q+1)^3$ facets $F=F_1\cup F_2\cup F_3$ of size $3q$. Such a facet contains
$\tau^{12}=\sigma_1\cup\sigma_2'$ iff $F_1=\sigma_1$ and $F_2=\sigma_2'$
(with $F_3$ arbitrary: $q+1$ facets), and contains
$\tau^{23}=\sigma_2\cup\sigma_3'$ iff $F_2=\sigma_2$ and $F_3=\sigma_3'$
(with $F_1$ arbitrary: $q+1$ facets); no facet contains both, since that would
force $F_2=\sigma_2'=\sigma_2$, whereas
$\sigma_2'=V_2\setminus\{1_2\}\ne V_2\setminus\{(q+1)_2\}=\sigma_2$. Hence
$(q+1)^3-2(q+1)$ join facets survive.

\emph{Type B.} Let $W$ be a minimal transversal with $|W|=4$; it meets each
$V_i$, so exactly one block $W_i=W\cap V_i$ has size $2$. The only minimal
non-faces meeting $V_1$ are $V_1$ and $\tau^{12}$, and the only ones meeting
$V_3$ are $V_3$ and $\tau^{23}$; since a single hyperedge can witness only one
vertex of $W$, the cases $|W_1|=2$ and $|W_3|=2$ are impossible. So $|W_2|=2$;
write $W=\{u,v,v',w\}$ with $u\in V_1$, $\{v,v'\}\subset V_2$, $w\in V_3$.
Minimality forces $u,v$ to be witnessed by $\tau^{12}=\sigma_1\cup\sigma_2'$ and
$v',w$ by $\tau^{23}=\sigma_2\cup\sigma_3'$, whence $u=(q+1)_1$ (the vertex of
$V_1$ outside $\sigma_1$), $v=1_2$, $v'=(q+1)_2$, $w=1_3$. This gives the unique
transversal $W=\{(q+1)_1,1_2,(q+1)_2,1_3\}$, i.e.\ the single type-B facet $F_B$.
\end{proof}

\begin{lemma}
\label{lem:Kq-nondeg}
For every $q\ge 1$ and every $Z\subseteq[m]$ containing a minimal non-face of
$K(q)$, one has $|Z|+\rho_Z\ge 2\nu=2(q+1)$. Hence the non-degeneracy
hypothesis of Proposition~\ref{prop:sing} is satisfied, and
\[
\dim\Sing\bigl(V(\Phi_K)\bigr)=m+r-2\nu=q^3+3q^2+2q+1 .
\]
\end{lemma}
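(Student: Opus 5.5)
We verify the inequality $|Z|+\rho_Z\ge 2(q+1)$ by splitting according to which minimal non-faces of $K(q)$ lie in $Z$; the dimension formula then follows from the counts already established. By Definition~\ref{def:Kq} the minimal non-faces are $V_1,V_2,V_3$ (size $q+1$) and $\tau^{12}=\sigma_1\cup\sigma_2'$, $\tau^{23}=\sigma_2\cup\sigma_3'$ (size $2q$). Remark~\ref{rem:sing-hypothesis} already disposes of every $Z$ containing at most two minimal non-faces, since then $|Z|+\rho_Z\ge |\tau_1|+|\tau_2|\ge 2\nu$ (or $\ge 2|\tau|\ge 2\nu$ with one). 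So it remains to treat $Z$ containing at least three of the five. In that case we simply bound $|Z|$ from below by the size of the union of the contained non-faces and use $\rho_Z\ge 0$.

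The case analysis runs over the triples among the five non-faces.

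\textbf{Any two of the $V_i$.} If $Z$ contains two of the $V_i$, then $|Z|\ge 2(q+1)$ and we are done.

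\textbf{At most one $V_i$.} The triple must then contain both $\tau^{12}$ and $\tau^{23}$ together with some $V_i$. The union $\tau^{12}\cup\tau^{23}=\sigma_1\cup V_2\cup\sigma_3'$ already has $q+(q+1)+q=3q+1$ elements, since $\sigma_2'\cup\sigma_2=V_2$. Adding a $V_i$ only increases this, so $|Z|\ge 3q+1\ge 2q+2$ for $q\ge1$.

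In all cases $|Z|\ge 2\nu$, hence $|Z|+\rho_Z\ge 2\nu$.

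The only point needing care is the equality case $q=1$. There $3q+1=4=2\nu$, and also $\tau^{12}=\{1_1,2_2\}$ has size $2=\nu$, so the minimum is attained in several strata but never undercut. This is consistent with the remark that $Z=V_1\cup V_2$ attains the minimum with $\rho_Z=0$. (For $q=1$, $\tau^{12}$ and $\tau^{23}$ are edges of the same size as the $V_i$, but the two-non-face argument of the remark is size-independent, so nothing changes.)

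Finally, Proposition~\ref{prop:sing} gives $s=m+r-2\nu$. With $m=3(q+1)$, $r=q^3+3q^2+q$ from \eqref{eq:r-formula}, and $\nu=q+1$, this is
\[
s=3q+3+q^3+3q^2+q-2q-2=q^3+3q^2+2q+1 .
\]
I expect no real obstacle. The only step that needs to be stated explicitly, rather than cited, is the computation $|\tau^{12}\cup\tau^{23}|=3q+1$, which uses $\sigma_2\cup\sigma_2'=V_2$.
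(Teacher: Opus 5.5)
Your argument is correct and is essentially the paper's. Both proofs send $Z$ containing at most two minimal non-faces to Remark~\ref{rem:sing-hypothesis}, and for three or more bound $|Z|$ below by a union of contained non-faces, using the same key count $|\tau^{12}\cup\tau^{23}|=3q+1$. The only difference is the case split: you separate ``two blocks $V_i$'' from ``both $\tau$'s'', whereas the paper separates ``two disjoint members'' from the pairwise-intersecting family $\{V_2,\tau^{12},\tau^{23}\}$; yours is slightly cleaner because it needs neither $\rho_Z$ nor the observation that the latter case forces $q\ge 2$.
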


\begin{proof}
Let $\MM(Z)$ be the set of minimal non-faces of $K(q)$ contained in $Z$
(nonempty), so $\rho_Z=\bigl|\bigcap_{\tau\in\MM(Z)}\tau\bigr|$.  The
minimal non-faces are the pairwise disjoint blocks $V_1,V_2,V_3$ (of size
$\nu=q+1$) and the deleted faces $\tau^{12}=\sigma_1\cup\sigma_2'\subseteq
V_1\cup V_2$ and $\tau^{23}=\sigma_2\cup\sigma_3'\subseteq V_2\cup V_3$ (of size
$2q$); note $V_1\cap\tau^{23}=V_3\cap\tau^{12}=\varnothing$ and
$\tau^{12}\cap\tau^{23}=V_2\setminus\{1_2,(q+1)_2\}$.

If $|\MM(Z)|\le 2$, the inequality is Remark~\ref{rem:sing-hypothesis}.
Suppose $|\MM(Z)|\ge 3$.  If two members of $\MM(Z)$ are disjoint,
then $\rho_Z=0$, while any two disjoint minimal non-faces have total size at
least $2\nu$, so $|Z|\ge 2\nu$.  Otherwise the members pairwise intersect;
since the blocks are pairwise disjoint, $V_1\cap\tau^{23}=V_3\cap\tau^{12}
=\varnothing$, and $\tau^{12}\cap\tau^{23}\ne\varnothing$ forces $q\ge 2$, the
only such family is $\MM(Z)=\{V_2,\tau^{12},\tau^{23}\}$.  Then
$Z\supseteq V_2\cup\tau^{12}\cup\tau^{23}=\sigma_1\cup V_2\cup\sigma_3'$, 
whence $|Z|\ge 3q+1>2\nu$.  In all cases $|Z|+\rho_Z\ge 2\nu$.

The value $2\nu$ is attained (at $Z=V_i$, where $\rho_Z=\nu$, and at
$Z=V_i\cup V_j$, where $\rho_Z=0$), so $\min_Z(|Z|+\rho_Z)=2\nu$ and
Proposition~\ref{prop:sing} gives the stated dimension.
\end{proof}

\begin{prop}
\label{prop:K-homotopy}
For every $q \ge 1$, $|K(q)| \simeq S^{3q-2}$. In particular
$\widetilde{H}^*(K(q);\Z)$ is free of rank $1$, concentrated in degree $3q-2$.
\end{prop}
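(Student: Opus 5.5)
We will identify $|K(q)|$ with a twice-punctured sphere. The starting point is that $K^i=\partial\Delta^q$ is a triangulation of $S^{q-1}$, so by the standard homeomorphism $|A\ast B|\cong|A|\ast|B|$ the join $J=K^1\ast K^2\ast K^3$ is a triangulated sphere:
\[
|J|\cong S^{q-1}\ast S^{q-1}\ast S^{q-1}\cong S^{3q-1}.
\]
By Definition~\ref{def:Kq}, $K(q)=\sd_{\tau^{12}}\sd_{\tau^{23}}J$ is obtained from $J$ by deleting all faces containing $\tau^{12}$ or $\tau^{23}$. Both $\tau^{12}$ and $\tau^{23}$ are faces of $J$, since each misses a vertex of every block $V_i$ it meets.

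\textbf{Single star deletion.} For one face $\tau\in J$, let $b_\tau$ be the barycenter of $|\tau|$. Then $|\sd_\tau J|$ is a strong deformation retract of $|J|\setminus\{b_\tau\}$. The retraction is supported on the open star of $\tau$ (the open simplices $\mathrm{int}|\rho|$ with $\rho\supseteq\tau$). On each such closed simplex $|\rho|=|\tau|\ast|\rho\setminus\tau|$, we push radially away from $b_\tau$ onto $\partial|\tau|\ast|\rho\setminus\tau|$, a subcomplex consisting of faces not containing $\tau$. Points lying in faces that do not contain $\tau$ are fixed throughout. These retractions agree on common faces, so they glue to a global deformation.

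\textbf{Two deletions, chosen independently.} The two open stars are disjoint. A face containing both $\tau^{12}$ and $\tau^{23}$ would contain
\[
\tau^{12}\cup\tau^{23}=\sigma_1\cup V_2\cup\sigma_3'\supseteq V_2,
\]
but $V_2$ is a non-face of $J$. Consequently:
\begin{itemize}
\item the retraction for $\tau^{23}$ fixes every point of the closed star of $\tau^{12}$ that lies outside the open star of $\tau^{23}$, in particular the point $b_{\tau^{12}}$;
\item the two deformations are supported on disjoint open sets and fix each other's supports.
\end{itemize}
We therefore perform them one after the other, or equivalently simultaneously. This exhibits $|K(q)|$ as a deformation retract of $|J|\setminus\{b_{\tau^{12}},b_{\tau^{23}}\}$. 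The two points are distinct because they lie in the interiors of distinct simplices. Hence
\[
|K(q)|\simeq S^{3q-1}\setminus\{\text{two points}\}\cong\mathbb{R}^{3q-1}\setminus\{\text{pt}\}\simeq S^{3q-2}.
\]
For $q=1$ this reads $S^2$ minus two points $\simeq S^1$, which is consistent. The statement about $\widetilde H^*(K(q);\Z)$ follows immediately.

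The main point needing care is the compatibility of the two radial retractions. Their supports meet only in faces containing neither $\tau^{12}$ nor $\tau^{23}$, and both retractions fix those pointwise, so the argument should go through. As a fallback, and as a check, we would run Mayer--Vietoris on $J=K\cup\overline{\mathrm{st}}(\tau^{12})\cup\overline{\mathrm{st}}(\tau^{23})$. Each closed star is a cone, hence contractible, and each meets $K$ in $\partial\tau\ast\lk\tau$, a $(3q-2)$-sphere. This gives $\widetilde H_*(K)\cong\Z$ in degree $3q-2$. Since $q\ge1$, $K$ is simply connected when $3q-2\ge2$, and Hurewicz and Whitehead then upgrade this homology computation to a homotopy equivalence.
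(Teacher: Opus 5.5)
Your proposal is correct and follows the paper's argument. Both identify $|K^1\ast K^2\ast K^3|$ with $S^{3q-1}$ and show the two open stars are disjoint because no face contains $\tau^{12}\cup\tau^{23}$; the paper phrases this via $F_2=\sigma_2$ versus $F_2=\sigma_2'$, you via $V_2\subseteq\tau^{12}\cup\tau^{23}$. Both then read off $|K(q)|$ as a twice-punctured sphere, and your cone-from-barycenter retraction makes precise the paper's passage from deleting two open balls to deleting two points.
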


\begin{proof}
Put $S = K^1\ast K^2\ast K^3$. Since $K^i=\partial\Delta^q\simeq S^{q-1}$, we have
$|S|\cong S^{3q-1}$, and by Definition~\ref{def:Kq}
$|K| = |S|\setminus\bigl(\operatorname{ost}_S\tau^{12}\cup\operatorname{ost}_S\tau^{23}\bigr)$.
No facet $F=F_1\cup F_2\cup F_3$ of $S$ contains both $\tau^{12}$ and $\tau^{23}$:
the first requires $F_2=\sigma_2'$, the second $F_2=\sigma_2$, and
$\sigma_2'\ne\sigma_2$. Hence the two open stars are disjoint open PL balls, and
$|K|$ is $S^{3q-1}$ with two distinct points deleted, which is homotopy
equivalent to $S^{3q-2}$.
\end{proof}

\begin{corollary}
\label{cor:cohomology}
For every $q \ge 2$, the moment-angle complex $\ZK$ associated with 
$K = K(q)$ satisfies
\[
  H^{2q}(\ZK;\Z) = 0, \qquad
  H^{2q+1}(\ZK;\Z) \cong \Z^3, \qquad
  H^{6q+2}(\ZK;\Z) \cong \Z .
 \]
In particular, $\ZK$---and hence $F_K \simeq \ZK$---is $2q$-connected.
\end{corollary}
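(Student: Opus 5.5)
The plan is to read off all three groups from Hochster's formula~\eqref{eq:hochster}. A class in $\widetilde H^p(K_J;\Z)$ sits in $H^{p+|J|+1}(\ZK;\Z)$, so for each target degree $D$ I enumerate the pairs $(J,p)$ with $p+|J|=D-1$. I then use the combinatorics of $K=K(q)$ from Definition~\ref{def:Kq} and Proposition~\ref{prop:K-homotopy} to decide which $\widetilde H^p(K_J)$ are nonzero. Two general facts will be used throughout.
\begin{itemize}
\item If $J$ contains no minimal non-face, then $K_J$ is a full simplex and is acyclic.
\item Since $\nu(K)=q+1$, every full subcomplex $K_J$ has a complete $(q-1)$-skeleton, as noted after Theorem~\ref{thm:ZZ}. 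Hence $K_J$ is $(q-2)$-connected and $\widetilde H^p(K_J)=0$ for $p\le q-2$. The $p=q-2$ case is covered by freeness of $\widetilde H^{q-1}$ via universal coefficients, or by working with reduced homology directly.
\end{itemize}

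\emph{Low degrees.} For $D\le 2q+1$ we have $p+|J|\le 2q$, and a nonzero summand needs $|J|\ge q+1$, hence $p\le q-1$. In degree $D=2q$ this forces $p\le q-2$, so every summand vanishes, giving $H^{2q}(\ZK)=0$; the same argument kills every degree $1\le D\le 2q$. In degree $D=2q+1$ the only surviving possibility is $p=q-1$ and $|J|=q+1$, with $J$ containing a minimal non-face. The minimal non-faces of size $q+1$ are $V_1,V_2,V_3$, and the only other ones, $\tau^{12}$ and $\tau^{23}$, have size $2q>q+1$; this is exactly where $q\ge 2$ is used. So $J=V_i$, $K_J=\partial\Delta^q$, and each contributes $\widetilde H^{q-1}(S^{q-1})=\Z$, giving $H^{2q+1}(\ZK)\cong\Z^3$. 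For the connectivity statement: $\ZK$ is $2$-connected, in particular simply connected, and its integral homology vanishes through degree $2q$ by the same computation (or directly from Theorem~\ref{thm:ZZ} with $\nu(K)\ge q+1$). Hurewicz then gives $2q$-connectivity, and Corollary~\ref{cor:FdB-ZK} transfers it to $F_K$.

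\emph{Top degree $6q+2$.} Here $p+|J|=6q+1$ with $|J|\le m=3q+3$ and $p\le\dim K_J\le\dim K=3q-1$, since all facets have at most $3q$ vertices by Lemma~\ref{lem:facets}.
\begin{itemize}
\item $J=V$ forces $p=3q-2$, and Proposition~\ref{prop:K-homotopy} gives $\widetilde H^{3q-2}(K)\cong\Z$.
\item $|J|=3q+2$ forces $p=3q-1$, the top possible dimension.
\item $|J|\le 3q+1$ forces $p\ge 3q$, which exceeds $\dim K$, so these summands vanish.
\end{itemize}

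\emph{Main obstacle.} The delicate case is $|J|=3q+2$, i.e.\ $J=V\setminus\{v\}$, where one must show $\widetilde H^{3q-1}(K_J)=0$. Top-dimensional cohomology of a complex is free of rank equal to that of its top homology, so it suffices to show $H_{3q-1}(K_J;\Z)=0$. I would argue that $K_J\subseteq K\subsetneq S=K^1*K^2*K^3$, where $S$ is a triangulated $(3q-1)$-sphere. The group of $(3q-1)$-cycles of $S$ is $\Z$, generated by the fundamental cycle, which involves every top simplex of $S$. A top cycle supported on $K_J$ is also a cycle of $S$, hence a multiple of the fundamental cycle. But $K$ omits the facets containing $\tau^{12}$, so that multiple must be $0$. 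This step, identifying top cycles of a proper subcomplex of a pseudomanifold sphere, is the one I expect to need the most care. Combining the three cases gives $H^{6q+2}(\ZK;\Z)\cong\Z$.
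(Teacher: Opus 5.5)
Your Hochster-formula bookkeeping follows the paper's. Your treatment of degrees $\le 2q+1$ (via the complete $(q-1)$-skeleton of every $K_J$) and of $|J|\le 3q+1$ (via $\dim K=3q-1$) is correct, and slightly cleaner than the paper's case analysis.

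\textbf{The gap is in the case $|J|=3q+2$.} The principle you invoke there is false: top-dimensional cohomology need not be free of rank equal to that of top homology. Top \emph{homology} is free, since it is the cycle group. But universal coefficients give
\[
\widetilde H^{3q-1}(K_J;\Z)\cong\operatorname{Hom}(H_{3q-1}(K_J),\Z)\oplus\operatorname{Ext}(H_{3q-2}(K_J),\Z),
\]
and the Ext term is the torsion subgroup of $H_{3q-2}(K_J)$. Compare $H^2(\mathbb{RP}^2;\Z)=\Z/2$ with $H_2(\mathbb{RP}^2;\Z)=0$. So showing $H_{3q-1}(K_J;\Z)=0$ only yields $H^{6q+2}(\ZK;\Z)\cong\Z\oplus T$ with $T$ finite. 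That is the rational statement, not the integral one claimed.

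The step you singled out as delicate is actually immediate and needs no fundamental cycle of $S$. There are no chains above degree $3q-1$, so $H_{3q-1}(K_J)=Z_{3q-1}(K_J)\subseteq Z_{3q-1}(K)=H_{3q-1}(K)=0$ by Proposition~\ref{prop:K-homotopy}. The real content of this case is the torsion, which your argument does not address.

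\textbf{How to close the gap.} Argue directly in cohomology. The paper uses Mayer--Vietoris for $K=K_{V\setminus v}\cup\overline{\operatorname{st}}_K(v)$, whose intersection is $\lk_K(v)$. The closed star is a cone and $\widetilde H^{3q-1}(K)=\widetilde H^{3q}(K)=0$, so $\widetilde H^{3q-1}(K_{V\setminus v};\Z)\cong\widetilde H^{3q-1}(\lk_K v;\Z)$. This vanishes because facets of $K$ have at most $3q$ vertices, hence $\dim\lk_K(v)\le 3q-2$.

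Alternatively, your sphere idea works once you run it on cochains rather than chains. Since $|K_J|$ is a proper subcomplex of $|S|\cong S^{3q-1}$, Alexander duality gives $\widetilde H^{3q-1}(K_J;\Z)\cong\widetilde H_{-1}(|S|\setminus|K_J|;\Z)=0$, the complement being nonempty. For an elementary version: each $(3q-2)$-simplex of $S$ lies in exactly two facets, and the facet-adjacency graph of $S$ is connected. Walking from any $(3q-1)$-simplex of $K_J$ to a facet of $S$ missing from $K_J$ then expresses its characteristic cochain as a coboundary in $K_J$.
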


\begin{proof}
By Hochster's formula~\cite{Ho77}, 
$H^n(\ZK;\Z) \cong \bigoplus_{J\subseteq V}\widetilde{H}^{n-|J|-1}(K_J;\Z)$. 
Throughout, a summand can be nonzero only if $K_J$ is non-contractible, 
which forces $J$ to contain a minimal non-face of $K$; recall these are 
the three sets $V_i$ (cardinality $q+1$) and the $2q$ deleted faces 
$\tau$ (cardinality $2q$).

\smallskip
\noindent\emph{The group $H^{2q+1}$.}  Here the summands are 
$\widetilde{H}^{2q-|J|}(K_J)$, so $|J|\le 2q$.  If $J\supseteq\tau$ for 
a deleted face, then $|J|=2q$ and $J=\tau$, giving 
$K_\tau = \partial\Delta^{2q-1}\simeq S^{2q-2}$ and the contribution 
$\widetilde{H}^{0}(S^{2q-2}) = 0$.  Otherwise $J$ contains a single 
$V_i$ (two would need $\ge 2q+2$ vertices), so $J = V_i\sqcup J'$ with 
$J'\subseteq\bigcup_{l\ne i}V_l$ of cardinality $\le q-1$; then $V_i$ is 
the only minimal non-face in $J$ and $K_J = \partial\Delta^{V_i}\ast
\Delta^{J'}$, contractible unless $J' = \varnothing$.  Thus only 
$J = V_1, V_2, V_3$ contribute, each $\widetilde{H}^{q-1}
(\partial\Delta^q)\cong\Z$, whence $H^{2q+1}(\ZK;\Z)\cong\Z^3$.

\smallskip
\noindent\emph{The group $H^{2q}$.}  The same analysis at $n=2q$ gives 
summands $\widetilde{H}^{2q-|J|-1}(K_J)$ with $|J|\le 2q-1$; no $J$ 
contains a $\tau$, and the single-$V_i$ case $J = V_i\sqcup J'$ has 
$K_J=\partial\Delta^{V_i}\ast\Delta^{J'}$ contractible for 
$J'\ne\varnothing$, while $J=V_i$ contributes 
$\widetilde{H}^{q-2}(\partial\Delta^q)=0$.  Hence $H^{2q}(\ZK;\Z)=0$.

\smallskip
\noindent\emph{The group $H^{6q+2}$.}  The summands are 
$\widetilde{H}^{6q+1-|J|}(K_J)$.  As $\widetilde{H}^d(K_J)=0$ for 
$d > |J|-1$, a nonzero summand needs $6q+1-|J|\le |J|-1$, i.e.\ 
$|J|\ge 3q+1$; with $|J|\le m = 3q+3$, only $|J|\in\{3q+1,3q+2,3q+3\}$ 
arise.
\begin{itemize}
\item $|J| = 3q+3$, i.e., $J = V$: 
by Proposition~\ref{prop:K-homotopy}, $\widetilde H^{3q-2}(K)\cong\Z$, so 
$J = V$ contributes $\Z$.
\item $|J| = 3q+1$, i.e., $J = V\setminus\{v,w\}$: the summand 
$\widetilde{H}^{3q}(K_J)$ sits in the top degree $|J|-1$.  A nonzero 
class there requires a $3q$-dimensional face spanning all of $J$; but 
then $J\in K$ and $K_J=\Delta^{J}$ is contractible, while if $J\notin K$ 
there are no such faces.  Either way, $\widetilde{H}^{3q}(K_J)=0$.
\item $|J| = 3q+2$, i.e.\ $J = V\setminus\{v\}$: write 
$K = K_{V\setminus v}\cup\overline{\operatorname{st}}_K(v)$, where the closed star 
$\overline{\operatorname{st}}_K(v)$ is a cone (hence contractible) and 
$K_{V\setminus v}\cap\overline{\operatorname{st}}_K(v) = \operatorname{lk}_K(v)$.  
The reduced Mayer--Vietoris sequence, together with $\widetilde{H}^{3q-1}(K) = 
\widetilde{H}^{3q}(K) = 0$ (Proposition~\ref{prop:K-homotopy}), yields 
$\widetilde{H}^{3q-1}(K_{V\setminus v})\cong\widetilde{H}^{3q-1}
(\operatorname{lk}_K v)$.  Every facet of $K$ has at most $3q$ vertices, so 
$\dim\operatorname{lk}_K(v)\le 3q-2$ and this group vanishes.
\end{itemize}
Hence $J = V$ is the only contributor and $H^{6q+2}(\ZK;\Z)\cong \Z$.

\smallskip
\noindent\emph{Connectivity.}  By Theorem~\ref{thm:ZZ} 
($\nu(K)=q+1$), $\ZK$ is $2q$-connected. The groups computed above 
show this is sharp, since $H^{2q}(\ZK;\Z)=0$ while 
$H^{2q+1}(\ZK;\Z)\cong\Z^3\ne 0$.
\end{proof}

\subsection{The polynomial $\Phi_K$ and its Milnor fiber}
\label{subsec:Phi-Kq-Milnor}

We can now state the main theorem on the family.  The 
combinatorial bookkeeping ($m$, $\nu$, $r$, $N$) is carried by 
Definition~\ref{def:Kq} and Lemma \ref{lem:facets}; 
the homotopy and cohomology results by 
Proposition~\ref{prop:K-homotopy} and Corollary~\ref{cor:cohomology}.  

\begin{theorem}
\label{thm:family}
For each $q \ge 2$, let $K = K(q)$ be the simplicial complex of 
Definition~\ref{def:Kq}, and let $\Phi_K$ be the associated 
weighted-homogeneous polynomial of Corollary~\ref{cor:FdB-ZK}, 
with $m = 3(q+1)$, $r = q^3 + 3q^2 + q$, and $N = m+r$.  Then:
\begin{enumerate}[label=(\roman*), font=\upshape, itemsep=2pt]
\item \label{p1}
$\Phi_K \colon \C^N \to \C$ is weighted-homogeneous of degree $5$, 
with $w(x_j) = 1$ for $j \in [m]$, $w(y_F) = 2$ for the 
$y$-variables corresponding to type-A facets, and $w(y_F) = 1$ 
for those corresponding to type-B facets.
\item \label{p2}
The Milnor fiber $F_K = \Phi_K^{-1}(1) \simeq \ZK$ is 
$2q$-connected, and $H^*(F_K;\Q)$ contains a non-trivial triple 
Massey product in degree $6q+2$.
\item \label{p3}
$F_K$ has exact formality degree $6q$: it is $6q$-formal but not 
$(6q+1)$-formal.
\end{enumerate}
In particular, Theorem~\ref{thm:main} is realized for every 
$k \ge 1$ and $n = 3$ by taking $q = k+1$.
\end{theorem}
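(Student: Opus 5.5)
Part~\ref{p1} is bookkeeping from Lemma~\ref{lem:facets} and the recipe of \S\ref{subsec:milnor-K}. Type-A facets have $|F|=3q$, so $d_F=m-|F|=3$. The type-B facet has $|F_B|=3q-1$, so $d_{F_B}=4$. Hence $d=1+\max_j d_j=5$, and the weights $w(y_F)=d-d_F$ are $2$ for type~A and $1$ for type~B, with $w(x_j)=1$. The value $r=q^3+3q^2+q$ comes from \eqref{eq:r-formula}.

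For part~\ref{p2}, Corollary~\ref{cor:FdB-ZK} gives $F_K\simeq\ZK$ and trivial monodromy, and Corollary~\ref{cor:cohomology} gives $2q$-connectivity. The Massey product comes from Theorem~\ref{thm:GL-joins} applied with $n=3$, $K^i=\partial\Delta^q$ and $p_i=q-1$. Here $J_i=V_i$ and $|J_1\cup J_2\cup J_3|=3(q+1)$, so the product lands in degree $3(q-1)+3(q+1)+2=6q+2$. This is consistent with Corollary~\ref{cor:cohomology}, where $H^{6q+2}\cong\Z$ comes only from $J=V$. The classes $\alpha_i\in H^{2q+1}$ are the Hochster classes of the $V_i$. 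The one point to check is that Definition~\ref{def:Kq} really is Construction~\ref{cons:GL1} for these choices, namely $S_{a_i}=\{\sigma_i\}$ and $P_{a_i}=\{\sigma_i'\}$; this was arranged before the definition. Non-formality over $\Q$ then follows from \S\ref{subsec:massey-zk}, via Denham--Suciu and Watkiss.

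For part~\ref{p3}, non-$(6q+1)$-formality is immediate from Lemma~\ref{lem:massey-formal}. The space is simply connected, all $n_i=2q+1\ge2$, and the product is non-trivial and lands in $L=6q+2$, so $(L-1)$-formality fails. The substantive half is $6q$-formality. My plan is to use the F\'elix--Tanr\'e model $A(K)$ of Theorem~\ref{thm:FT}, or equivalently the Koszul/Baskakov model $\Lambda[u_1,\dots,u_m]\otimes\Q[K]$ with $du_i=v_i$, which is a $\mathbb{Z}^m$-multigraded cdga. Its multigraded pieces for $J\subseteq V$ compute $\widetilde H^*(K_J)$.

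The goal is a cdga zig-zag that is a quasi-isomorphism through degree $6q$ and injective in degree $6q+1$. Degree considerations restrict where obstructions can occur. Cohomology classes have degree $\ge 2q+1$, so a product of three or more classes has degree $\ge 6q+3$, and any Massey-type obstruction or relation must involve at least three bottom classes. A $q$-minimal model built through degree $6q$ therefore has differentials forced by products of pairs. I would construct the $6q$-minimal model explicitly: generators dual to $H^{\le 6q}$, with quadratic differentials killing exactly the cup-product relations in degrees $\le 6q+1$. Then I would check that the relations in degrees $\le 6q+1$ all come from length-two products. Concretely, I would use Baskakov's product formula to compare the multigraded pieces with $|J|\le$ roughly $3q+1$ in the cochain model and in cohomology, and show that for such $J$ the subalgebra is formal in the relevant range. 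The full subcomplexes $K_J$ missing at least two vertices have simple structure, being joins of boundaries and simplices after the star deletions.

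The main obstacle is this $6q$-formality verification. Bounding generator degrees is routine, but ruling out hidden higher operations, such as Massey products of mixed-degree classes landing in degree $\le 6q+1$, requires the multigraded analysis above. I would try first to show that for every proper $J\subsetneq V$ the restriction $\ZZ_{K_J}$ is formal, since those $K_J$ are joins of simplices, boundaries and single star deletions. Because the only cohomology outside proper $J$ sits in degree $6q+2$, the multigrading would then assemble these formality maps into a $6q$-equivalence.
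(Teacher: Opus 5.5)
Your treatment of (i), (ii), and of the failure of $(6q+1)$-formality via Lemma~\ref{lem:massey-formal} matches the paper. The gap is in the other half of (iii): you do not actually prove $6q$-formality. Your multigraded plan has two unproved steps. First, it needs every $\ZZ_{K_J}$ with $J\subsetneq V$ to be formal, which you do not establish. Second, it needs a gluing step that is not justified. The multigraded pieces of $A(K)$ are not sub-cdgas: a product of pieces indexed by disjoint $I,J$ lands in the piece indexed by $I\sqcup J$, via Baskakov's formula. So separately chosen formality maps for the retracts $\ZZ_{K_J}$ do not obviously assemble into a single cdga map out of a model of $\ZK$, let alone a $6q$-equivalence.

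The missing idea is that no combinatorics of $K(q)$ is needed. $6q$-formality follows from $2q$-connectivity alone, and this is how the paper argues.

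In the minimal model $(\Lambda V,d)$ of $F_K$ one has $V^j=0$ for $j\le 2q$. So $\Lambda^{\ge 2}V$ vanishes below degree $4q+2$, and $\Lambda^{\ge 3}V$ vanishes below degree $6q+3$. Hence every generator of degree $\le 4q$ is closed. Every generator $x$ of degree $\le 6q$ has $dx$ equal to a sum of products of two generators of degrees in $[2q+1,4q]$, and these generators are closed.

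Choose a complement $N$ to $\ker(d|_V)$. Send $\ker(d|_V)$ to the corresponding cohomology classes and send $N$ (concentrated in degrees $>4q$) to $0$. This defines a cdga map $\Lambda V^{\le 6q}\to(H^*(F_K;\Q),0)$: each $dx$ is exact in $\Lambda V$, so the corresponding sum of products of classes vanishes.

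In degrees $\le 6q+1$, every cocycle of $\Lambda V^{\le 6q}$ is a sum of closed generators and products of two of them. So this map induces the same homomorphism on $H^{\le 6q+1}$ as the inclusion $\Lambda V^{\le 6q}\inj\Lambda V$, under $H^*(\Lambda V)\cong H^*(F_K;\Q)$. That homomorphism is an isomorphism through degree $6q$ and a monomorphism in degree $6q+1$, which is exactly $6q$-formality.

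Your worry about hidden higher operations is therefore unfounded. A $k$-fold Massey product, $k\ge 3$, of classes of degree $\ge 2q+1$ lands in degree $\ge 2qk+2\ge 6q+2$. You were one step from this argument when you observed that the differentials through degree $6q$ are quadratic. What you missed is that the quadratic factors are themselves cocycles, and that is precisely what makes the map to $(H^*,0)$ exist.
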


\begin{proof}
\ref{p1} By Lemma~\ref{lem:facets}, the smallest facet of $K$ 
has cardinality $3q - 1$, so the polynomial 
$\Phi_K = \sum_F y_F\, g_F$ of Corollary~\ref{cor:FdB-ZK} has 
degree $1 + (m - (3q-1)) = 5$, with $w(x_j) = 1$, and 
$w(y_F) = 5 - \deg g_F = 5 - (m - |F|)$.  For a type-A facet 
$|F| = 3q$, so $w(y_F) = 5 - 3 = 2$.  For a type-B facet 
$|F| = 3q - 1$, so $w(y_F) = 5 - 4 = 1$.

 \smallskip
 \ref{p2} By Corollary~\ref{cor:cohomology}, $\ZK$ is
 $2q$-connected.  The Grbi\'c--Linton join theorem
 (Theorem~\ref{thm:GL-joins}) applied to the Baskakov-cocycle
 construction of $K(q)$ produces a non-trivial triple Massey product
 $\langle\alpha_1,\alpha_2,\alpha_3\rangle$ in $H^{6q+2}(\ZK;\Q)$,
 where $\alpha_i\in H^{2q+1}(\ZK;\Q)$ is the Hochster class of the
 generator of $\widetilde{H}^{q-1}(K^i;\Q)$.  By
 Corollary~\ref{cor:FdB-ZK}, $F_K\simeq\ZK$, and both the connectivity
 and the Massey-product class transfer.

 \smallskip
 \ref{p3} The triple Massey product
 $\langle\alpha_1,\alpha_2,\alpha_3\rangle$ of~\ref{p2} lands in degree
 $L = 6q+2$, so by Lemma~\ref{lem:massey-formal} its non-triviality
 obstructs $(6q+1)$-formality.  It is the lowest such obstruction: as
 $F_K$ is $2q$-connected, its minimal model has generators only in
 degrees $\ge 2q+1$, so through degree $6q$ the differential is
 quadratic in closed generators---a generator of degree $\le 6q$
 differentiates to a product of two generators of degrees in
 $[2q+1,4q]$, all closed since $2(2q+1) > 4q+1$---whence the
 $6q$-minimal model agrees with that of $(H^*(F_K;\Q),0)$.  Thus $F_K$
 is $6q$-formal but not $(6q+1)$-formal, of exact formality degree
 $6q$.
\end{proof}

\begin{example}[$K^i = \partial\Delta^2$, $n = 3$: the first GL example]
\label{ex:k2n3}
Specialize to $q = 2$: take $K^i = \partial\Delta^2$ on the disjoint vertex sets
$V_1=\{a,b,c\}$, $V_2=\{d,e,f\}$, $V_3=\{g,h,i\}$ (so $1_i,2_i,3_i$ are
$a,b,c$, etc.). With $a_i = \chi_{\sigma_i}$ and $v_{\sigma_i}=1_i$, the partner
facets are $\sigma_1'=\{b,c\}$, $\sigma_2'=\{e,f\}$, $\sigma_3'=\{h,i\}$, so
$\tau^{12}=\{a,b,e,f\}$ and $\tau^{23}=\{d,e,h,i\}$, and Definition~\ref{def:Kq}
gives
\[
K(2) = \sd_{\{a,b,e,f\}}\, \sd_{\{d,e,h,i\}}\,(K^1 \ast K^2 \ast K^3)
\]
on $m = 9$ vertices, with minimal non-faces $\{a,b,c\}$, $\{d,e,f\}$,
$\{g,h,i\}$ and the two deleted faces $\{a,b,e,f\}$, $\{d,e,h,i\}$; hence
$\nu(K) = 3$. Lemma~\ref{lem:facets} gives $r = 22$: twenty-one type-A facets of
size $6$ and the single type-B facet $F_B = \{a,b,e,h,i\}$ of size $5$. The
polynomial $\Phi_K\colon \C^{31}\to\C$ is weighted-homogeneous of degree $5$,
with $21$ degree-$3$ monomials (of $y$-weight $2$) and one degree-$4$ monomial
(of $y$-weight $1$).

The Poincar\'e polynomial of $F_K\simeq\ZK$, computed by \texttt{Macaulay2}, is
\begin{equation}
\label{eq:poin-fk}
P_{F_K}(t) = 1 + 3t^5 + 2t^7 + 4t^8 + t^{10} + 2t^{12} + 4t^{13} + t^{14}.
\end{equation}
Since $\ZK$ is $4$-connected (Theorem~\ref{thm:ZZ}), the polynomial~\eqref{eq:poin-fk}
shows $H^5(\ZK;\Q)\cong\Q^3\neq 0$, so $\ZK$ is not $5$-connected and the connectivity
bound of Theorem~\ref{thm:ZZ} is sharp. (By universal coefficients $H^5(\ZK;\Z)$ is
torsion-free, hence $\cong\Z^3$.) The triple Massey product lives in $H^{14}(\ZK;\Q)\cong\Q$.
By Proposition~\ref{prop:sing}, 
$\dim\Sing(V(\Phi_K)) = m+r-2\nu = 9+22-6 = 25$, of
codimension $5$; correspondingly the Kato--Matsumoto bound
$N-s-2 = 31-25-2 = 4$ equals the connectivity of $F_K$.
\end{example}

\subsection{Further remarks}
\label{subsec:further-remarks}

We close the section with a few remarks on the invariants of the family
$K(q)$, its higher-$n$ analogues, and the hypotheses behind the construction.

\begin{remark}
\label{rem:family-m2}
For the family $K(q)$ one has the closed forms
\begin{equation}
\label{eq:mrNnu}
m = 3(q+1), \quad r = q^3+3q^2+q, \quad N = q^3+3q^2+4q+3, \quad
\nu = q+1,
\end{equation}
and, by Lemma~\ref{lem:Kq-nondeg}, the non-degeneracy condition 
of Proposition~\ref{prop:sing} holds, so 
\begin{equation}
\label{eq:sing-phik}
s = \dim\Sing\bigl(V(\Phi_K)\bigr) = m+r-2\nu = q^3+3q^2+2q+1 .
\end{equation}
Consequently, the Kato--Matsumoto bound $N-s-2 = 2q$ equals the connectivity 
$2q$ of $F_K$ throughout the family. The facet count $r$ and the singular-locus 
dimension $s$ were verified directly in \texttt{Macaulay2} for $2\le q\le 5$.
\end{remark}

\begin{remark}
\label{rem:n-family}
For fixed $q=2$ (so $K^i=\partial\Delta^2$, $k=1$) and varying $n\ge 3$, the
Baskakov cocycle again has $|S_{a_i}|=1$ and $|P_{a_i}|=1$, so each of the
$\binom{n}{2}-1$ pairs $(i,j)$ with $(i,j)\ne(1,n)$ contributes a single GL
star deletion, for $\binom{n}{2}-1$ deletions in all. 
The simplicial complex $K$ has $m = 3n$ vertices, and the
minimal-non-face count in Step~2 of the proof of
Theorem~\ref{thm:main} is uniform in $n$, so it gives
$\nu(K) = k+2 = 3$ for every $n\ge 3$: the minimal non-faces of $K$
are the $n$ triangles $[3]_i$, of cardinality~$3$, together with the
star-deleted faces, of cardinality $2k+2 = 4$.  By
Theorem~\ref{thm:ZZ}, $\ZK$ is therefore $4$-connected for every
$n\ge 3$, and the estimate of \S\ref{subsec:Hochster-conn} makes this
sharp: $\nu(K)=3$ forces $\widetilde{H}^i(\ZK;\Z)=0$ for $i\le 4$,
while the minimal non-faces $[3]_i$ contribute a nonzero class in
$H^{2\nu(K)-1}=H^5(\ZK;\Z)$.  For $n=3$ this is
Example~\ref{ex:k2n3}.  
The $n$-fold Massey product lands in $H^{4n+2}(F_K;\Z)$ by the 
degree formula $2n(k+1)+2$ with $k = 1$. The number of facets 
$r = r(n)$, and hence the ambient dimension $N = 3n + r(n)$, 
depends on the combinatorial geometry of the deletions and is 
determined by direct enumeration; for $n = 3$, 
Example~\ref{ex:k2n3} gives $r = 22$.
\end{remark}

\begin{remark}
\label{rem:wh-necessary}
The passage to weighted-homogeneous polynomials in 
Theorem~\ref{thm:family} is unavoidable.  By \S\ref{subsec:FdB}, 
$\Phi_K$ is homogeneous if and only if $K$ is pure, i.e., all 
facets have the same cardinality.  By Lemma~\ref{lem:facets}, 
$K(q)$ has facets of two distinct sizes ($3q$ and $3q - 1$) for 
every $q \ge 1$, so $K(q)$ is never pure and $\Phi_K$ is 
genuinely weighted-homogeneous but not homogeneous in this 
entire family.  Whether a homogeneous polynomial can have a 
highly connected non-formal Milnor fiber at all is 
Problem~\ref{prob:homogeneous} below.
\end{remark}

\section{Open problems}
\label{sec:questions}

\begin{problem}
\label{prob:homogeneous}
Is there a version of Theorem~\ref{thm:main} where the polynomial
$f$ is homogeneous?  By Remark~\ref{rem:wh-necessary}, any such
example cannot arise from the GL construction applied to
$K^i = \partial\Delta^{k+1}$ with $k\ge 1$, so a genuinely
different approach would be needed.
\end{problem}

\begin{problem}
\label{prob:edge}
The Milnor fibers constructed here realize the Grbi\'c--Linton join
construction (Construction~\ref{cons:GL1}).  Are the edge-stretched families of
Construction~\ref{cons:GL2} and Theorem~\ref{thm:GL-edge} likewise realizable as
Milnor fibers of weighted-homogeneous polynomials, in the sense of
Theorem~\ref{thm:FdB}?
\end{problem}

\begin{problem}
\label{prob:arrangements}
Can one find hyperplane arrangement polynomials $Q(\A)$ (not just
weighted-homo\-geneous polynomials via \cite{FdB}) whose Milnor fibers $F(\A)$
are non-formal with non-formality witnessed directly by non-trivial
Massey products in $H^*(F(\A);\Q)$?  The Grbi\'c--Linton framework
may be relevant here if one can identify $F(\A)$ as a moment-angle
complex, or find a map from a non-formal $\ZK$ to $F(\A)$ that
is surjective on cohomology.
\end{problem}

\begin{problem}
\label{prob:comparison}
Determine the precise relationship between the non-formality of
$F(\A)$ detected via the tangent cone / characteristic variety
method (as in \cite{Su-mfnf}) and the non-formality detected
via non-trivial Massey products.  Is every non-formal Milnor fiber
of a hyperplane arrangement detected by some Massey product?
\end{problem}

\begin{problem}
\label{prob:KM}
For the $n = 3$ family of Theorem~\ref{thm:family}, the 
Kato--Matsumoto bound is sharp: Proposition~\ref{prop:sing} gives 
$s = \dim \Sing(\Phi_K) = m + r - 2\nu = q^3 + 3q^2 + 2q + 1$, 
its non-degeneracy condition holding here (Lemma~\ref{lem:Kq-nondeg}); 
therefore $N - s - 2 = 2\nu - 2 = 2q$, exactly the connectivity of 
$F_K \simeq \ZK$ (Corollary~\ref{cor:cohomology}).  Does this 
sharpness persist for the general-$n$ construction of 
Theorem~\ref{thm:main}?  There $\nu(K) = k+2$ and the Milnor fiber is 
still $(2k+2)$-connected (Step~2 of its proof and 
Theorem~\ref{thm:ZZ}), so by Proposition~\ref{prop:sing} the bound is 
sharp precisely when $s = m + r - 2(k+2)$, i.e.\ when the 
non-degeneracy condition $|Z| + \rho_Z \ge 2\nu$ holds for every 
non-face $Z$.  That condition is not established for general $n$: by 
Remark~\ref{rem:sing-hypothesis} it could fail only for a set $Z$ 
meeting three or more minimal non-faces with empty common 
intersection, and whether such a configuration arises among the 
deleted faces is not understood.  Moreover the facet count $r = r(n)$, 
hence $N$, is known only by direct enumeration 
(Remark~\ref{rem:n-family}); determining $s$, and whether $N - s - 2$ 
continues to match $2k+2$, remains open.
\end{problem}

\begin{problem}
\label{prob:quaternionic}
The complexes $K$ produced in Theorem~\ref{thm:main} have minimal
non-faces of cardinality $\nu(K) = k+2$ (Step~2 of its proof).  For any
such $K$, the fibration
$\ZK(D^4,S^3)\to\ZK(\mathbb{HP}^\infty,*)\to(\mathbb{HP}^\infty)^m$ has
base cohomology $\Z[v_1,\dots,v_m]$ with $\deg v_i = 4$, and the lowest
Stanley--Reisner relation, of degree $4\nu(K)$, transgresses to
$H^{4\nu(K)-1}$ of the fiber; hence $\ZK(D^4,S^3)$ is
$\bigl(4\nu(K)-2\bigr) = (4k+6)$-connected.  Does there exist a
holomorphic $f\colon\C^N\to\C$ whose Milnor fiber is homotopy equivalent
to $\ZK(D^4,S^3)$?  An affirmative answer would raise the connectivity
in Theorem~\ref{thm:main} from $2k+2$ to $4k+6$.
\end{problem}

\newcommand{\arxiv}[1]
{\texttt{\href{http://arxiv.org/abs/#1}{arXiv:#1}}}
\newcommand{\arxi}[1]
{\texttt{\href{http://arxiv.org/abs/#1}{arxiv:}}
\texttt{\href{http://arxiv.org/abs/#1}{#1}}}
\newcommand{\arxx}[2]
{\texttt{\href{http://arxiv.org/abs/#1.#2}{arxiv:#1.}}%
\texttt{\href{http://arxiv.org/abs/#1.#2}{#2}}}
\newcommand{\doi}[1]
{\texttt{\href{http://dx.doi.org/#1}{doi:\nolinkurl{#1}}}}
\renewcommand{\MR}[1]
{\href{http://www.ams.org/mathscinet-getitem?mr=#1}{MR#1}}

\end{document}